\title[actions with globally hypoelliptic orbitwise laplacian]{Classification of abelian actions with globally hypoelliptic orbitwise laplacian I: The Greenfield-Wallach conjecture on nilmanifolds}
\author[S.~Sandfeldt]{Sven Sandfeldt}
\address{Department of mathematics, Kungliga Tekniska högskolan, Lindstedtsvägen 25, SE-100 44 Stockholm, Sweden.}
\email{svensan@kth.se}
\date{\today}
\newcommand{\norm}[1]{\left\lVert#1\right\rVert}
\newcommand{\intd}{{\rm d}}
\newtheorem*{rep@theorem}{\rep@title}
\newcommand{\newreptheorem}[2]{%
\newenvironment{rep#1}[1]{%
 \def\rep@title{#2 \ref{##1}}%
 \begin{rep@theorem}}%
 {\end{rep@theorem}}}
\newtheorem{mainTheorem}{Theorem}
\newtheorem{theorem}{Theorem}[section]
\newtheorem{lemma}{Lemma}[section]
\newtheorem{definition}{Definition}[section]
\newtheorem*{definition*}{Definition}
\theoremstyle{definition}
\newtheorem{example}{Example}[section]
\newtheorem{remark}{Remark}
\begin{document}

\begin{abstract}
For a $\mathbb{R}^{k}-$action generated by vector fields $X_{1},...,X_{k}$ we define an operator $-(X_{1}^{2}+...+X_{k}^{2})$, the orbitwise laplacian. In this paper, we study and classify $\mathbb{R}^{k}-$actions whose orbitwise laplacian is globally hypoelliptic (GH). In three different settings we prove that any such action is given by a translation action on some compact nilmanifold, (i) when the space is a compact nilmanifold, (ii) when the first Betti number of the manifold is sufficiently large, (iii) when the codimension of the orbitfoliation of the action is $1$. As a consequence, we prove the Greenfield-Wallach conjecture on all nilmanifolds. Along the way, we also calculate the cohomology of GH $\mathbb{R}^{k}-$actions, proving, in particular, that it is always finite dimensional.
\end{abstract}

\maketitle

\section{Introduction}

Let $M$ be a closed smooth $d-$manifold and $\Omega^{\ell}(M)$ its space of $\ell-$forms. An operator $L:C^{\infty}(M)\to C^{\infty}(M)$ is said to be \textit{globally hypoelliptic} if any distributional solution $D\in C^{\infty}(M)'$ to the equation
\begin{align}
L'D = \omega,\quad\omega\in\Omega^{d}(M)
\end{align}
satisfy $D\in\Omega^{d}(M)$. That is, if $L'D = \omega$ is a distributional solution with smooth data $\omega$, then $D$ is also smooth. If $X$ is a vector field on $M$ then $X$ naturally defines a first order differential operator by differentiating along $X$. A vector field is \textit{globally hypoelliptic} if the associated differential operator is globally hypoelliptic. A vector $\mathbf{v}\in\mathbb{R}^{d}$ is \textit{diophantine} if $|\mathbf{v}\cdot\mathbf{n}|\geq C/\norm{\mathbf{n}}^{\tau}$, $\mathbf{n}\in\mathbb{Z}^{d}\setminus0$, for some constants $C$ and $\tau$. It is a well-known fact that any diophantine vector $\mathbf{v}$, considered as a vector field on the torus $\mathbb{T}^{d}$, is a globally hypoelliptic \cite[Section 3]{GreenfieldWallach2}. In \cite{GreenfieldWallach1,GreenfieldWallach2} (\cite[Problem 2]{GreenfieldWallach1}) S. Greenfield and N. Wallach conjectured that the converse also holds: every GH vector field is (up to a smooth change of coordinates) given by a constant diophantine vector field on a torus. This is known as the \textit{Greenfield-Wallach conjecture}. Early progress was made in the original paper by Greenfield and Wallach, where they gave a complete solution when the manifold is $2-$dimensional \cite[Theorem 1.4]{GreenfieldWallach1}, a complete solution for homogeneous vector fields on $3-$manifolds \cite[Proposition 2.1]{GreenfieldWallach1}, and a complete solution for killing vector fields on any Riemannian manifold \cite[Proposition 2.2]{GreenfieldWallach1}. 

Given a vector field $X$ we refer to the equation
\begin{align}
Xu = v + c,\quad u,v\in C^{\infty}(M),\text{ }c\in\mathbb{C},
\end{align}
as the (smooth) \textit{cohomological equation}. The cohomological equation (and its various twisted versions) is important in many problems arising from dynamics. Notably, understanding the cohomological equation was key in the approach initiated by G. Forni and L. Flaminio to study effective equidistribution of various parabolic flows \cite{FlaminioForni2003,FlaminioForni2006,Kim2022,FlaminioForni2023}. Another problem in which (twisted) cohomological equations are important, is in the rigidity of Anosov actions. Both the topological rigidity of Anosov diffeomorphisms on (infra-)nilmanifolds \cite{Franks1969,Manning1974}, and the smooth rigidity of higher rank Anosov actions on (infra-)nilmanifolds \cite{FisherKalininSpatzier2013,Rodriguez-HertzWang2014}, are proved by studying certain naturally occurring cohomological equations (in fact, these are the same cohomological equations as those that occur in Section \ref{SubSec:GWConjectureOnNilmanifolds}). The cohomological equation also plays a key role in the \textit{KAM approach} to local rigidity of actions \cite{DamjanovicKatok2010,DamjanovicKatok2011Parabolic,DamjanovicFayadSaprykina2023,DamjanovicFayad2019,Wang2019,Wang2022}. We say that the vector field $X$ is \textit{cohomology free} if the cohomological equation $Xu = v + c$ has a solution $u\in C^{\infty}(M)$, $c\in\mathbb{C}$ for every $v\in C^{\infty}(M)$. It is a well-known fact that the diophantine constant vectors $\mathbf{v}\in\mathbb{R}^{d}$ define cohomology free vector fields on $\mathbb{T}^{d}$. A conjecture due to Katok \cite{KatokCombinatorial,Hurder}, \textit{the Katok conjecture}, states that the converse also holds: a vector field $X$ is cohomology free if and only if $X$ is (up to smooth coordinate change) a constant diophantine vector field on a torus. It is not hard to check that if $X$ is cohomology free then $X$ is also globally hypoelliptic (see \cite[Proposition 3.6]{ForniGLWdim3}). As a consequence, the Greenfield-Wallach conjecture implies the Katok conjecture. In \cite{ForniGLWdim3} (building on work in \cite{ChenChiEqGW}) it was shown that the converse also holds: if $X$ is a globally hypoelliptic vector field, then $X$ is cohomology free.

Progress towards the Greenfield-Wallach conjecture (and the Katok conjecture) has been made by making two distinct assumptions on the system under consideration. Either one makes topological assumptions on the manifold, or one makes algebraic assumptions on the manifold (and possibly also the dynamics). As an example, if one assumes that the manifold $M\cong\mathbb{T}^{d}$ is a torus, then the Greenfield-Wallach conjecture is known to hold \cite{ChenChiEqGW}. That is, the Greenfield-Wallach conjecture holds if one assumes that the space is the one predicted by the conjecture. A stronger result was proved by F. Rodriguez Hertz and J. Rodriguez Hertz in \cite{RordriguezHertzRodriguezHertz2006} where the authors show that if $X$ is a cohomology free vector field on $M$ and $M$ has first betti-number $b_{1}(M)$, then there is a submersion $M\to\mathbb{T}^{b_{1}}$ conjugating $X$ onto a linear flow. In particular, if the first betti number of $M$ is large enough, then the Greenfield-Wallach conjecture holds on $M$. Similar ideas were also used in \cite{FlaminioPaternainEmbedding} to show that any cohomology free flow embeds as a linear flow in some abelian group. The results in \cite{RordriguezHertzRodriguezHertz2006}, and the solution to the Weinstein conjecture \cite{TaubesWeinstein}, were later used independently by G. Forni and A. Kocsard to prove the Greenfield-Wallach conjecture completely in dimension $3$ \cite{KocsardGLWdim3,ForniGLWdim3}. Progress towards the Greenfield-Wallach conjecture has also been made when $M$ is a homogeneous space, and the vector field $X$ is homogeneous. Indeed, in \cite{ForniRHGWC} it is shown that a homogeneous vector field $X$ can only be cohomology free if $M$ is a torus, so in this case the Greenfield-Wallach conjecture holds. In particular, any globally hypoelliptic homogeneous vector field on a nilmanifold $M$ is a constant diophantine vector field on a torus. Our first main theorem extends this result from all homogeneous vector fields to \textit{all vector fields}.
\begin{mainTheorem}\label{MainThm:ThmA}
The Greenfield-Wallach conjecture is true on all compact nilmanifolds. That is, the only nilmanifold that supports a globally hypoelliptic vector field is the torus.
\end{mainTheorem}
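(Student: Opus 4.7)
My plan is to combine setting (i) of the paper's classification of $\mathbb{R}^{k}$-actions with globally hypoelliptic orbitwise laplacian on compact nilmanifolds (announced in the abstract) with the Forni--Rodriguez Hertz theorem \cite{ForniRHGWC} for homogeneous vector fields. The former reduces the problem to the homogeneous case, and the latter solves the homogeneous case.

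In detail: let $X$ be a globally hypoelliptic vector field on $M = \Gamma\backslash N$. The $\mathbb{R}$-action generated by $X$ has orbitwise laplacian $-X^{2}$, which is globally hypoelliptic because $X$ is: if $u$ is a distribution with $-X^{2}u = v$ smooth, then $X(Xu) = -v$ is smooth, so $Xu$ is smooth by the GH property of $X$, and a second application of GH gives that $u$ itself is smooth. Setting (i) of the classification, applied to this $\mathbb{R}$-action on the compact nilmanifold $M$, should therefore yield a smooth diffeomorphism $\phi: M \to \Gamma'\backslash N'$ onto some compact nilmanifold $\Gamma'\backslash N'$ under which the $\mathbb{R}$-action becomes a translation action; equivalently, $\phi_{*}X$ is a left-invariant (homogeneous) vector field on $\Gamma'\backslash N'$.

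The conjugated vector field $\phi_{*}X$ remains globally hypoelliptic because the GH property is preserved under smooth diffeomorphism. As $\phi_{*}X$ is now a homogeneous GH vector field on the nilmanifold $\Gamma'\backslash N'$, the Forni--Rodriguez Hertz theorem \cite{ForniRHGWC} forces $\Gamma'\backslash N'$ to be a torus. Hence $M$ is diffeomorphic to a torus, so $\Gamma = \pi_{1}(M)$ is abelian. By Malcev's theorem, a simply connected nilpotent Lie group is determined (up to isomorphism) by any of its cocompact lattices, so the Malcev completion $N$ of $\Gamma$ must also be abelian; therefore $M = \Gamma\backslash N$ is itself a torus, as desired.

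The main obstacle is the classification in setting (i) itself, which is the central substantive content of the paper and which I am using here as a black box. If instead that classification turns out to be stated only for $k \geq 2$, the plan would need to be modified by first promoting $X$ to an $\mathbb{R}^{b_{1}(M)}$-action using \cite{ForniGLWdim3} (to obtain cohomology-freeness of $X$) and then \cite{RordriguezHertzRodriguezHertz2006} (to obtain the commuting generators projecting to a Diophantine translation on $\mathbb{T}^{b_{1}(M)}$), verifying along the way that the resulting higher-rank action has GH orbitwise laplacian. Modulo this contingency, the remainder of the proof is a short assembly of standard ingredients: the elementary equivalence that $X$ is GH iff $-X^{2}$ is GH, the invariance of global hypoellipticity under smooth conjugacy, Forni--Rodriguez Hertz, and Malcev rigidity of nilmanifolds.
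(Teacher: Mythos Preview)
Your proposal is correct and follows essentially the same route as the paper: reduce to the homogeneous case via Theorem~\ref{MainThm:ThmB}(i), then invoke \cite{ForniRHGWC}. The paper's proof of Theorem~\ref{MainThm:ThmB}(i) actually produces a self-conjugacy $H:M_{\Gamma}\to M_{\Gamma}$ (so the target nilmanifold is $M_{\Gamma}$ itself), which makes your Malcev step unnecessary, and your $k\geq 2$ contingency does not arise; the paper also gives a self-contained alternative to the \cite{ForniRHGWC} citation via a Heisenberg-group lemma, but your black-box use of that reference matches the paper's primary argument.
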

Let $\alpha:\mathbb{R}^{k}\times M\to M$ be a smooth $\mathbb{R}^{k}-$action generated by vector fields $X_{1},...,X_{k}$. Define the \textit{orbitwise laplacian} of $\alpha$ on $C^{\infty}(M)$ by
\begin{align*}
\Delta_{\alpha}:C^{\infty}(M)\to C^{\infty}(M),\quad\Delta_{\alpha}u := -\left(X_{1}^{2}+...+X_{k}^{2}\right)u.
\end{align*}
We say that $\alpha$ is globally hypoelliptic (GH) if $\Delta_{\alpha}$ is globally hypoelliptic. If $k = 1$, then $\Delta_{\alpha}$ is GH if and only if the vector field $X$ generating $\alpha$ is globally hypoelliptic. So, the Greenfield-Wallach conjecture can be stated as: a $\mathbb{R}-$action $\alpha$ is GH if and only if $\alpha$ is smoothly conjugated to a diophantine linear flow on a torus. In \cite{DanijelaGH2} D. Damjanović, J. Tanis, and Z. Wang posed a conjecture for $\mathbb{R}^{k}-$actions, stating that any homogeneous GH action on a finite volume homogeneous space $G/D$ is smoothly conjugate to a translation action on some nilmanifold (of step $\ell\leq k$). Since a $\mathbb{R}-$action is GH if and only if the generating vector field $X$ is globally hypoelliptic, this extends the Greenfield-Wallach conjecture in the context of homogeneous actions. A related conjecture has also been raised by J. Cygan and L. Richardson for families of vector fields \cite{CyganRichardson1988,CyganRichardson1991}.

The main result of this paper is that certain GH actions are smoothly conjugated to translation actions on nilmanifolds. Recall that the first Betti number of a manifold $M$ is the dimension of the first real homology group, $b_{1}(M) = \dim(H_{1}(M))$. 
\begin{mainTheorem}\label{MainThm:ThmB}
Let $\alpha:\mathbb{R}^{k}\times M\to M$ be a locally free ${\rm GH}$ action. If one of the following holds:
\begin{enumerate}[label = (\roman*)]
    \item $M$ is a nilmanifold,
    \item $b_{1}(M)\geq\dim(M) - 1$, where $b_{1}(M)$ is the first Betti number,
    \item $k\geq\dim(M) - 1$, 
\end{enumerate}
then $\alpha$ is smoothly conjugated to an action by translations on some nilmanifold $M_{\Gamma} = \Gamma\setminus G$. Moreover, in case $(ii)$ the nilpotent Lie group is $G = H^{g}\times\mathbb{R}^{n}$ where $H^{g}$ is the $g$th Heisenberg group (and $H^{0} = 1$), and in case $(iii)$ the nilpotent Lie group is either $G = H^{1}\times\mathbb{R}^{n}$ or $G = \mathbb{R}^{n}$.
\end{mainTheorem}
\begin{remark}
For a definition of the Heisenberg groups, see Example \ref{Ex:HeisenberGroup}.
\end{remark}
\begin{remark}
In \cite{DanijelaGH2} the authors conjecture that all homogeneous GH actions are translation actions on nilmanifolds. Motivated by Theorem \ref{MainThm:ThmB} it seems likely that more is true, and that the following question has an affirmative answer: is every GH $\mathbb{R}^{k}-$action given by a translation action on a nilmanifold?
\end{remark}
\begin{remark}
The conclusion of Theorem \ref{MainThm:ThmB} is that the action $\alpha$ is a translation action on a nilmanifold, not a torus. The existence of such actions on $2-$step nilmanifolds was shown by D. Damjanović in \cite{DanijelaGH1}. In forthcoming work \cite{Sandfeldt2024}, we construct a rich class of GH translation actions on higher step nilmanifolds, we also conjecture that this class is complete in the sense that every GH $\mathbb{R}^{k}-$action is smoothly conjugated to an action within the class.
\end{remark}
\begin{remark}
The Katok conjecture can also be stated for diffeomorphisms $f:M\to M$, instead of for flows. In this case, the cohomological equation takes the form
\begin{align}\label{Eq:DiscreteCohomologicalEquation}
u(fx) - u(x) = v(x) + c,\quad u,v\in C^{\infty}(M),\text{ }c\in\mathbb{C},
\end{align}
and the Katok conjecture states that Equation \ref{Eq:DiscreteCohomologicalEquation} has a solution for every $v\in C^{\infty}(M)$ if and only if $f$ is (up to smooth coordinate change) a diophantine translation on a torus. Much less is known about the discrete Katok conjecture. Indeed, apart from some results on low-dimensional tori \cite{dosSantosLuz1998}, nothing is known in full generality. If one restricts to diffeomorphisms homotopic to identity, then more can be proved and, in particular, the results from \cite{RordriguezHertzRodriguezHertz2006} still apply. For a $\mathbb{Z}^{k}-$action $\alpha$ generated by $f_{1},...,f_{k}$, we define a (discrete) orbitwise laplacian as
\begin{align}
\Delta_{\alpha}u = -\sum_{j = 1}^{k}\left(u\circ f_{j} - 2u + u\circ f_{j}^{-1}\right).
\end{align}
We say that $\alpha$ is GH if $\Delta_{\alpha}$ is globally hypoelliptic. Many of the results of this paper can be generalized to GH $\mathbb{Z}^{k}-$actions that are homotopic to identity (e.g., the cohomology calculations in Section \ref{SubSec:HodgeTheoryForDynamicalCohomology}, and both $(i)$ and $(ii)$ from Theorem \ref{MainThm:ThmB}). However, obtaining information about the homotopy type of a $\mathbb{Z}^{k}-$action from the assumption that $\Delta_{\alpha}$ is globally hypoelliptic is difficult. Indeed, even in the case of the torus (of high dimension), it is still not known if a cohomology free diffeomorphism is homotopic to identity.
\end{remark}

\section{Background and definitions}

For the remainder of this paper $M$ will be assumed to be a smooth closed manifold. Furthermore, any action on $M$ will be assumed to be smooth.

\subsection{Cohomology of dynamical systems}

Let $G$ be a connected Lie group with Lie algebra $\mathfrak{g}$. We begin by defining cohomology groups for representations of $\mathfrak{g}$ (the \textit{Chevalley-Eilenberg complex}) which will be used to define the cohomology of an action. Let $\pi:\mathfrak{g}\to\text{End}(V)$ be a representation on some space $V$. Let the space of $\ell-$\textit{cochains} be defined by:
\begin{align*}
C^{\ell}(\pi) = \text{Hom}\left(\Lambda_{\ell}(\mathfrak{g}),V\right)
\end{align*}
where $\Lambda_{\ell}(\mathfrak{g})$ is the $\ell$th exterior power of $\mathfrak{g}$. Define a differential map $\intd:C^{\ell}(\pi)\to C^{\ell+1}(\pi)$ by
\begin{align*}
(\intd\omega)(Y_{0},...,Y_{\ell}) = & \sum_{i = 0}^{\ell}(-1)^{i}\pi\left(Y_{i}\right)\left(\omega(Y_{0},...,\widehat{Y}_{i},...,Y_{\ell})\right) + \\ & +
\sum_{i < j}(-1)^{i+j}\omega([Y_{i},Y_{j}],...,\widehat{Y}_{i},...,\widehat{Y}_{j},...,Y_{\ell})
\end{align*}
where $\omega$ is an alternating multi-linear map and $\widehat{Y}_{i}$ indicates that $Y_{i}$ is omitted as an argument. A calculation shows $\intd^{2} = 0$. We define
\begin{align}
H^{\ell}(\pi) = \frac{\ker\left(\intd:C^{\ell}(\pi)\to C^{\ell+1}(\pi)\right)}{\text{Im}\left(\intd:C^{\ell-1}(\pi)\to C^{\ell}(\pi)\right)} =: \frac{Z^{\ell}(\pi)}{B^{\ell}(\pi)}.
\end{align}
We call the elements in $Z^{\ell}(\pi)$ \textit{cocycles} and the elements in $B^{\ell}(\pi)$ \textit{coboundaries}. For a locally free action $\alpha:G\times M\to M$ we obtain a map $\rho_{\alpha}:\mathfrak{g}\to\Gamma(\text{T}M)$ defined as
\begin{align*}
\rho_{\alpha}(X)(x) := \frac{\intd}{\intd t}\bigg|_{t = 0}\alpha(tX,x).
\end{align*}
For a vector $X\in\mathfrak{g}$ we will denote $\rho_{\alpha}(X)$ by $X$ (since $\alpha$ is locally free the map $\rho_{\alpha}$ is injective, so this is justified after identifying $\mathfrak{g}$ with its image in $\Gamma(TM)$).
\begin{definition}\label{def:Cohomology1}
Let $\alpha:G\times M\to M$ be a $G-$action. We define the representation of $\mathfrak{g}$ on $C^{\infty}(M)$ by letting each $X\in\mathfrak{g}$ act on $C^{\infty}(M)$ by differentiation. We denote the corresponding cochains, cocycles, coboundaries, and cohomology groups by $C^{\ell}(\alpha)$, $Z^{\ell}(\alpha)$, $B^{\ell}(\alpha)$ and $H^{\ell}(\alpha)$.
\end{definition}
It will be useful to have explicit formulas for the coboundary operators $\intd:C^{0}(\alpha)\to C^{1}(\alpha)$ and $\intd:C^{1}(\alpha)\to C^{2}(\alpha)$ in the special case $G = \mathbb{R}^{k}$. For $\ell = 0$ and $\ell = 1$ we obtain the maps:
\begin{align}
&\label{Eq:FirstCoboundaryOperator} \intd\omega(X) = X\omega, \\
&\label{Eq:SecondCoboundaryOperator} \intd\omega(X,Y) = X\omega(Y) - Y\omega(X).
\end{align}
Definition \ref{def:Cohomology1} can be generalized to cover many relevant cochains occuring in practice.
\begin{example}\label{Ex:Cohomology2}
Let $E\to M$ be a smooth vector bundle over $M$ and let $\mathcal{A}:G\times E\to E$ be a smooth linear cocycle in $E$ over the action $\alpha:G\times M\to M$. That is we have for $g\in G$, $x\in M$ a linear map
\begin{align*}
\mathcal{A}(g,x):E_{x}\to E_{\alpha(g)x}
\end{align*}
that satisfies the cocycle property
\begin{align*}
\mathcal{A}(g_{1}g_{2},x) = \mathcal{A}(g_{1},\alpha(g_{2})x)\mathcal{A}(g_{2},x).
\end{align*}
If $\Gamma(E)$ is the space of smooth sections of $E$, and $u\in\Gamma(E)$ then we define, for $X\in\mathfrak{g}$, an operator
\begin{align*}
\pi^{\mathcal{A}}(X)u(x) = \frac{\intd}{\intd t}\bigg|_{t = 0}\mathcal{A}(\exp(-tX),x)u(\alpha(\exp(tX))x)
\end{align*}
which makes $\pi^{\mathcal{A}}$ into a representation of $\mathfrak{g}$ on $\Gamma(E)$. This example includes the previous example by considering the trivial cocycle on the flat bundle $M\times\mathbb{C}\to M$. But this example also contains many other important cohomology groups, for example, we can choose $\mathcal{A}$ to be the derivative cocycle in $TM$, or the adjoint of this cocycle in $T^{*}M$. Given a finite dimensional representation $\beta:G\to{\rm GL}(V)$ we can also define a twisted cohomology in the bundle $M\times V\to M$ with the cocycle $\mathcal{A}(g,x) = \beta(g)$.
\end{example}
\begin{definition}\label{def:Cohomology2}
Let $\alpha:\mathbb{R}^{k}\times M\to M$ be a smooth action and $G$ a Lie group. We say that $\beta:\mathbb{R}^{k}\times M\to G$ is a smooth $G-$valued cocycle over $\alpha$ if $\beta$ is smooth and
\begin{align*}
\beta(\mathbf{a}_{1} + \mathbf{a}_{2},x) = \beta(\mathbf{a}_{1},\alpha(\mathbf{a}_{2})x)\beta(\mathbf{a}_{2},x)
\end{align*}
for all $\mathbf{a}_{1},\mathbf{a}_{2}\in\mathbb{R}^{k}$ and $x\in M$. We say that two $G-$valued cocycles $\beta,\beta'$ are cohomologous if there is $b:M\to G$ such that $\beta(\mathbf{a},x) = b(\alpha(\mathbf{a})x)^{-1}\beta'(\mathbf{a},x)b(x)$. We say that $\beta$ is constant if there is a homomorphism $\phi:\mathbb{R}^{k}\to G$ such that $\beta(\mathbf{a},x) = \phi(\mathbf{a})$, and we say that $\beta$ is trivial if $\beta(\mathbf{a},x) = e$. We denote by $Z(\alpha,G)$ the space of $G-$valued smooth cocycles over $\alpha$.
\end{definition}
Let $\beta$ be a $\mathbb{C}^{n}-$valued cocycle as in Definition \ref{def:Cohomology2}. Given $X\in\mathbb{R}^{k}$ we define $\omega(X)(x)$ as the derivative of $\beta(tX,x)$ at $t = 0$, then $\omega(aX+bY)(x) = a\omega(X)(x) + b\omega(Y)(x)$ since the derivative is linear. Moreover, the cocycle condition in Definition \ref{def:Cohomology2} implies:
\begin{align}
\beta(tX,\alpha(sY)x) = \beta(tX + sY,x) - \beta(sY,x) = \beta(sY,\alpha(tX)x) + \beta(tX,x) - \beta(sY,x)
\end{align}
so if we differentiate in $s$ and $t$ at $t,s = 0$ we obtain:
\begin{align}\label{Eq:CocycleConditionIntegratedCocycle}
Y(\omega(X))(x) = X(\omega(Y))(x).
\end{align}
If $(\omega_{1}(X),...,\omega_{n}(X)) = \omega(X)$ then equation \ref{Eq:CocycleConditionIntegratedCocycle} is equivalent to $\omega_{j}\in Z^{1}(\alpha)$ (see formula \ref{Eq:SecondCoboundaryOperator}). Denote the map $\beta\mapsto(\omega_{1},...,\omega_{n})$ by $T:Z(\alpha,\mathbb{C}^{n})\to Z^{1}(\alpha)^{n}$. We have the following important result, which follows by integration.
\begin{theorem}\label{Thm:Cohomology1}
The map $T:Z(\alpha,\mathbb{C}^{n})\to Z^{1}(\alpha)^{n}$ is bijective. Moreover, $T(\beta)\in B^{1}(\alpha)^{n}$ if and only if $\beta$ is cohomologous to the trivial cocycle. More generally, $\beta$ is cohomologous to a constant cocycle if and only if $T(\beta) = L + \omega_{B}$ where $\omega_{B}\in B^{1}(\alpha)^{n}$ and $L:\mathbb{R}^{k}\to\mathbb{C}^{n}$ is a linear map. 
\end{theorem}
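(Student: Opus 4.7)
The plan is to invert $T$ by integrating a cocycle along paths in $\mathbb{R}^{k}$, and then to read off the two cohomological statements. The structural fact that makes the whole argument go is that, because $[X,Y] = 0$ in $\mathbb{R}^{k}$, the Chevalley-Eilenberg cocycle condition $\intd\omega = 0$ reduces to
\begin{align*}
X\omega(Y) = Y\omega(X),\quad X,Y\in\mathbb{R}^{k}.
\end{align*}
This is the analogue of $\omega$ being a closed $1$-form on the leaves of the orbit foliation, and the theorem amounts to a Poincar\'e lemma in this foliated calculus.

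For injectivity, given $\omega = T(\beta)$ and $X\in\mathbb{R}^{k}$, I differentiate the cocycle identity $\beta((s+t)X,x) = \beta(tX,\alpha(sX)x) + \beta(sX,x)$ in $t$ at $t = 0$ to get $\frac{\intd}{\intd s}\beta(sX,x) = \omega(X)(\alpha(sX)x)$, so that $\beta(sX,x) = \int_{0}^{s}\omega(X)(\alpha(\sigma X)x)\,\intd\sigma$. Iterating along a chain of one-parameter subgroups determines $\beta$ on all of $\mathbb{R}^{k}\times M$, so $T(\beta) = 0$ forces $\beta = 0$.

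For surjectivity I would, given $\omega\in Z^{1}(\alpha)^{n}$, try the definition
\begin{align*}
\beta(a,x) := \int_{0}^{1}\omega(a'(t))(\alpha(a(t))x)\,\intd t
\end{align*}
for any smooth path $a:[0,1]\to\mathbb{R}^{k}$ with $a(0) = 0$ and $a(1) = a$. The main technical step, which I expect to be the principal obstacle, is independence of the chosen path. Given a homotopy $H(s,t)$ rel endpoints, differentiating $F(s) = \int_{0}^{1}\omega(\partial_{t}H)(\alpha(H)x)\,\intd t$ in $s$ produces two contributions: an $\omega(\partial_{s}\partial_{t}H)$ term coming from linearity of $\omega$, and the Lie derivative term $(\partial_{s}H)[\omega(\partial_{t}H)]$ coming from the fact (valid because $\mathbb{R}^{k}$ is abelian) that the $s$-velocity of the curve $s\mapsto\alpha(H(s,t))x$ is $\rho_{\alpha}(\partial_{s}H)$ at $\alpha(H)x$. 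Integrating the first contribution by parts in $t$, the boundary contributions vanish because $\partial_{s}H$ is zero at $t = 0,1$, and the remaining term pairs with the second via the closedness identity above to yield $F'(s)\equiv 0$. With $\beta$ well-defined, the cocycle identity $\beta(a_{1}+a_{2},x) = \beta(a_{1},\alpha(a_{2})x) + \beta(a_{2},x)$ follows by splitting a path as \emph{first go from $0$ to $a_{2}$, then from $a_{2}$ to $a_{2}+a_{1}$}, and $T(\beta) = \omega$ follows by differentiating a short straight path at $0$.

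The cohomology statements now fall out. If $\omega = \intd b$ with $b\in C^{\infty}(M,\mathbb{C}^{n})$, then on the straight-line path the integrand equals $\frac{\intd}{\intd t}b(\alpha(ta)x)$, so $\beta(a,x) = b(\alpha(a)x) - b(x)$, exhibiting $\beta$ as cohomologous to the trivial cocycle; the converse is immediate by differentiating at $t = 0$. For the constant-cocycle statement, any constant cocycle $\phi(a,x) = \phi(a)$ automatically satisfies $\phi(a_{1}+a_{2}) = \phi(a_{1}) + \phi(a_{2})$, so $\phi$ is linear and $T(\phi)$ equals that linear map $L$ viewed as a constant $\mathbb{C}^{n}$-valued cochain. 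Applying the coboundary case to $\beta - \phi_{L}$ then gives the final claim.
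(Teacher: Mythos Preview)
Your proposal is correct and follows essentially the same route as the paper: both invert $T$ by integrating $\omega$ along paths in $\mathbb{R}^{k}$ (equivalently, along the orbit through $x$), use the closedness condition $X\omega(Y)=Y\omega(X)$ to get path-independence, and read off the cohomology statements by specializing to $\omega=\intd b$ and to linear/constant cochains. The only difference is cosmetic: the paper phrases path-independence by saying $\omega$ restricts to a closed $1$-form on each orbit (hence integrals over closed loops vanish, since the loops come from the simply connected $\mathbb{R}^{k}$), and then explicitly verifies that the integration map $S$ satisfies $TS=\mathrm{id}$ and $ST=\mathrm{id}$; you instead separate injectivity and surjectivity and carry out the homotopy computation for path-independence by hand.
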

\begin{proof}
We may assume, without loss of generality, that $n = 1$. Let $\omega\in Z^{1}(\alpha)$. Since the action $\alpha$ is locally free any $\alpha-$orbit is an immersed submanifold of $M$. That is, $\mathcal{O}_{\alpha}(x)$ is a manifold and using the action $\alpha$ we can identify $T_{y}\mathcal{O}_{\alpha}(x)$ with $\mathbb{R}^{k}$. We obtain local coordinates around $x$ in $\mathcal{O}_{\alpha}(x)$ by: $\mathbf{t} = (t^{1},...,t^{k})\mapsto\alpha(\mathbf{t})x$, and associated $1-$forms $\intd t^{j}$. For any $(X^{1},...,X^{k}) = X\in\mathbb{R}^{k}\cong T_{y}\mathcal{O}_{\alpha}(x)$ it is then clear that $\intd t^{j}(X) = X^{j}$. The cocycle $\omega$ naturally defines a $1-$form on the manifold $\mathcal{O}_{\alpha}(x)$ by $\omega(e_{1})\intd t^{1}+...+\omega(e_{k})\intd t^{k}$ where $e_{1},...,e_{k}\in\mathbb{R}^{k}$ is the standard basis. The de Rahm differential of $\omega$ viewed as a $1-$form on $\mathcal{O}_{\alpha}(x)$ can then be calculated as:
\begin{align*}
\intd(\omega(e_{1})\intd t^{1}+...+\omega(e_{k})\intd t^{k}) = & \sum_{j = 1}^{k}\left[\sum_{i = 1}^{k}e_{i}(\omega(e_{j}))\intd t^{i}\right]\wedge\intd t^{j} = \\ &
\sum_{i < j}\left[e_{i}(\omega(e_{j})) - e_{j}(\omega(e_{i}))\right]\intd t^{i}\wedge\intd t^{j} = 0
\end{align*}
where the last equality follows since $\omega\in Z^{1}(\alpha)$ (see formula \ref{Eq:SecondCoboundaryOperator}). That is, when we viewed as a $1-$form on $\mathcal{O}_{\alpha}(x)$, $\omega$ has de Rahm differential equal to $0$. It follows that for any piecewise smooth closed curve $\gamma$ in $\mathbb{R}^{k}$ we have
\begin{align}
0 = \int_{\alpha(\gamma)x}\omega.
\end{align}
As a consequence, the following function is well-defined
\begin{align}
\beta:\mathbb{R}^{k}\times M\to\mathbb{C},\quad\beta(\mathbf{t},x) = \int_{\alpha(\gamma)x}\omega,
\end{align}
where $\gamma$ is any smooth curve in $\mathbb{R}^{k}$ connecting $0$ and $\mathbf{t}$. Let $\mathbf{t},\mathbf{s}\in\mathbb{R}^{k}$ be two points, $\gamma_{1}:[0,1]\to\mathbb{R}^{k}$ be a smooth curve from $0$ to $\mathbf{s}$, $\gamma_{2}:[0,1]\to\mathbb{R}^{k}$ a smooth curve from $\mathbf{s}$ to $\mathbf{s} + \mathbf{t}$, and $\gamma_{2}\cdot\gamma_{1}$ the concatenation of the two curves. Note that $\gamma_{2}\cdot\gamma_{1}$ is a curve from $0$ to $\mathbf{s} + \mathbf{t}$ and $\gamma_{2} - \mathbf{s}$ is a curve from $0$ to $\mathbf{t}$. Using the definition of $\beta$ we obtain
\begin{align*}
\beta(\mathbf{t} + \mathbf{s},x) = & \int_{\alpha(\gamma_{2}\cdot\gamma_{1})x}\omega = \int_{\alpha(\gamma_{1})x}\omega + \int_{\alpha(\gamma_{2})x}\omega = \\ &
\int_{\alpha(\gamma_{1})x}\omega + \int_{\alpha(\gamma_{2} - \mathbf{s})\alpha(\mathbf{s})x}\omega = \beta(\mathbf{s},x) + \beta(\mathbf{t},\alpha(\mathbf{s})x),
\end{align*}
so $\beta$ is a $\mathbb{C}-$valued cocycle in the sense of Definition \ref{def:Cohomology2}. We define $S:Z^{1}(\alpha)\to Z(\alpha,\mathbb{C})$ by $S(\omega) = \beta$ where $\beta$ is constructed as above. We claim that $S$ is an inverse to $T$. Indeed, for $X\in\mathbb{R}^{k}$ and $t > 0$ we let $\gamma_{t}(s) = sX$, $s\in[0,t]$. We then have
\begin{align}
T(S(\omega))(X)(x) = \frac{\intd}{\intd t}\bigg|_{t = 0}\int_{\alpha(\gamma_{t})x}\omega = \frac{\intd}{\intd t}\bigg|_{t = 0}\int_{0}^{t}\omega(X)(\alpha(sX)x)\intd s = \omega(X)(x).
\end{align}
Conversely, if $\omega = T(\beta)$ then by using the line $\gamma_{1}(s) = sX$, for $X\in\mathbb{R}^{k}$ and $s\in[0,1]$, we obtain
\begin{align*}
S(\omega)(X,x) = & \int_{0}^{1}\omega(X)(\alpha(sX)x)\intd s = \int_{0}^{1}\frac{\intd}{\intd t}\bigg|_{t = 0}\beta(tX,\alpha(sX)x)\intd s = \\ &
\frac{\intd}{\intd t}\bigg|_{t = 0}\int_{0}^{1}\beta(tX,\alpha(sX)x)\intd s = \\ &
\frac{\intd}{\intd t}\bigg|_{t = 0}\int_{0}^{1}\left[\beta((t+s)X,x) - \beta(sX,x)\right]\intd s = \frac{\intd}{\intd t}\bigg|_{t = 0}\int_{0}^{1}\beta((t+s)X,x)\intd s = \\ &
\int_{0}^{1}\frac{\intd}{\intd s}\beta(sX,x)\intd s = \beta(X,x) - \beta(0,x) = \beta(X,x),
\end{align*}
where we used $\beta(0,x) = 0$, which follows since $\beta(0,x) = \beta(0+0,x) = \beta(0,\alpha(0)x) + \beta(0,x) = 2\beta(0,x)$.

If $b:M\to\mathbb{C}$ is smooth, then $\beta_{b}(\mathbf{t},x) = b(\alpha(\mathbf{t})x) - b(x)$ defines a $\mathbb{C}-$valued cocycle. We have
\begin{align}
T(\beta_{b})(X) = \frac{\intd}{\intd t}\bigg|_{t = 0}\left[b\circ\alpha(tX) - b\right] = Xb.
\end{align}
By formula \ref{Eq:FirstCoboundaryOperator} it follows that $T(\beta_{b})\in B^{1}(\alpha)$ for every $b\in C^{\infty}(M)$. Conversely, if $\omega\in B^{1}(\alpha)$ then there is some $b\in C^{\infty}(M)$ such that $\omega(X) = Xb$, so we obtain:
\begin{align}
S(\omega)(X,x) = \int_{0}^{1}\omega(X)(\alpha(sX)x)\intd s = \int_{0}^{1}Xb(\alpha(sX)x)\intd s = b(\alpha(X)x) - b(x)
\end{align}
so $S(\omega)$ is a coboundary in the sense of Definition \ref{def:Cohomology2}. Since any $\mathbb{C}-$valued cocycle cohomologous to $0$ is given by $\beta_{b}$ for some $b$ the second to last claim of the theorem follows. If $\beta$ is cohomologous to a constant cocycle, then there is a linear map $L:\mathbb{R}^{k}\to\mathbb{C}$ and a $b\in C^{\infty}(M)$ such that $\beta = \beta_{b} + L$. Since $T(\beta_{b})\in B^{1}(\alpha)$ it suffices to show that $T(L):\mathbb{R}^{k}\to\mathbb{C}$ is a linear map. This is immediate since
\begin{align}
T(L)(X) = \frac{\intd}{\intd t}\bigg|_{t = 0}L(tX) = L(X).
\end{align}
Conversely, if $\omega\in Z^{1}(\alpha)$ can be written as $\omega = \omega_{B} + L$ with $\omega_{B}\in B^{1}(\alpha)$ and $L$ linear, then $S(\omega_{B}) = \beta_{b}$ for some $b\in C^{\infty}(M)$, so it suffices to show that $S(L)$ is a homomorphism. To see this, note that we already showed that $T(L) = L$, so $S(L) = S(T(L)) = L$ since $S$ is the inverse of $T$.
\end{proof}

\subsection{Globally hypoelliptic operators and tame estimates}

We denote by $\norm{\cdot}_{n}$ the $C^{n}-$norm or $n$th Sobolev norm on $C^{\infty}(M)$ (by the Sobolev embedding theorem the choice between Sobolev norms or uniform norms makes no difference). We will always consider $C^{\infty}(M)$ as the space of smooth $\mathbb{C}-$valued functions.

Let $\mathcal{D}'(M) := (C^{\infty}(M))'$ be the space of distributions on $M$. That is, $\mathcal{D}'(M)$ consists of continuous linear functionals on $C^{\infty}(M)$ where $C^{\infty}(M)$ is endowed with its graded Fréchet structure given by $\norm{\cdot}_{n}$ (see \cite{Hamilton}). Let $\Omega^{\ell}(M)$ denote the space of $\ell-$forms on $M$. In particular, $\Omega^{d}(M)$ is the space of top forms. Any top form $\omega\in\Omega^{d}(M)$ naturally defines an element in $\mathcal{D}'(M)$ by integration:
\begin{align}\label{Eq:BackgroundTameEstimates1}
\omega(f) = \int_{M}f\omega,\quad f\in C^{\infty}(M).
\end{align}
Using this identification $\Omega^{d}(M)\to\mathcal{D}'(M)$ we consider $\Omega^{d}(M)$ as a subspace of $\mathcal{D}'(M)$.

Given a continuous linear operator $L:C^{\infty}(M)\to C^{\infty}(M)$ we consider the induced operator $L':\mathcal{D}'(M)\to\mathcal{D}'(M)$ by $L'D(f) = D(Lf)$.
\begin{definition}\label{Def:BackgroundTameEstimates1}
We say that an operator $L:C^{\infty}(M)\to C^{\infty}(M)$ is globally hypoelliptic ({\rm GH}) if any distributional solution $L'D = \omega$ with $\omega\in\Omega^{d}(M)$ satisfy $D\in\Omega^{d}(M)$.
\end{definition}
Let $\mu$ be a volume form on $M$. We will consider operators $L:C^{\infty}(M)\to C^{\infty}(M)$ with a well-defined adjoint $L^{*}:C^{\infty}(M)\to C^{\infty}(M)$. We say that $L^{*}:C^{\infty}(M)\to C^{\infty}(M)$ is an adjoint of $L$ if $L'(\overline{f}\mu) = \overline{L^{*}f}\cdot\mu$, where $\overline{f}$ is the complex conjugate to $f$ (note that the adjoint depends on $\mu$). It is immediate that if $L^{*}$ is an adjoint of $L$, then $L$ is an adjoint of $L^{*}$. That is, we have $(L^{*})^{*} = L$. With this definition of $L^{*}$ we immediately obtain
\begin{align*}
\langle Lg,f\rangle = \langle g,L^{*}f\rangle,\quad\langle u,v\rangle = \int_{M}u(x)\overline{v(x)}\intd\mu(x),
\end{align*}
which shows, in particular, that $L^{*}$ is unique since it is the $L^{2}-$adjoint of $L$. Define $L:\mathcal{D}'(M)\to\mathcal{D}'(M)$ by $LD(f) = D(L^{*}f)$. We embed $C^{\infty}(M)\to\mathcal{D}'(M)$ using $f\mapsto\overline{f}\mu$.
\begin{lemma}\label{L:BackgroundTameEstimates1}
Let $L$ be an operator $L:C^{\infty}(M)\to C^{\infty}(M)$ with adjoint $L^{*}:C^{\infty}(M)\to C^{\infty}(M)$. Then $L^{*}$ is {\rm GH} if and only if for any distributional solution $LD = u$ with $u\in C^{\infty}(M)$ we have $D\in C^{\infty}(M)$. Moreover $L$ is {\rm GH} if and only if for every distributional $L^{*}D = u$ with $u\in C^{\infty}(M)$ we have $D\in C^{\infty}(M)$.
\end{lemma}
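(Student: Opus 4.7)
The plan is to reduce the lemma to a near tautology by observing that the distributional extension $L:\mathcal{D}'(M)\to\mathcal{D}'(M)$ defined in the excerpt via $(LD)(f) = D(L^{*}f)$ is precisely the transpose $(L^{*})'$ of $L^{*}$. Since global hypoellipticity, as given in Definition \ref{Def:BackgroundTameEstimates1}, is phrased in terms of the transpose, the equivalence should fall out of the definitions once the embedding $\iota:C^{\infty}(M)\to\mathcal{D}'(M)$, $\iota(f) = \overline{f}\mu$, is unpacked.

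Concretely, for the first statement I would first note that the image $\iota(C^{\infty}(M))$ inside $\mathcal{D}'(M)$ is exactly $\Omega^{d}(M)$: every smooth top form equals $\overline{u}\mu$ for a unique $u\in C^{\infty}(M)$. Under this identification, the sentence ``$LD=u$ with $u\in C^{\infty}(M)$ implies $D\in C^{\infty}(M)$'' reads literally as ``$LD\in\Omega^{d}(M)$ implies $D\in\Omega^{d}(M)$''. Because $L$ on distributions coincides with $(L^{*})'$ by construction, the latter is verbatim the GH property of $L^{*}$. The only thing requiring a brief check is that the distributional $L$ is consistent with the original operator on $\iota(C^{\infty}(M))$; this is immediate from the defining relation $L'(\overline{g}\mu)=\overline{L^{*}g}\,\mu$ of the adjoint.

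For the second statement I would simply apply the first statement with the roles of $L$ and $L^{*}$ swapped, using the symmetry $(L^{*})^{*}=L$ noted in the excerpt.

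I do not expect any substantive obstacle: the whole argument is a careful bookkeeping of the two distinct ways ($L'$ versus $\iota$) that smooth objects sit inside $\mathcal{D}'(M)$, together with the identification $L=(L^{*})'$ on distributions. The mild subtlety is keeping straight which operator acts on test functions versus distributions, but once that is set up the equivalence is formal.
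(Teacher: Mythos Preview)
Your proposal is correct and is essentially the same argument as the paper's: both hinge on the identity $L=(L^{*})'$ on $\mathcal{D}'(M)$ together with the identification $\iota(C^{\infty}(M))=\Omega^{d}(M)$, after which the equivalence is immediate from Definition~\ref{Def:BackgroundTameEstimates1}, and the second claim follows from $(L^{*})^{*}=L$. The paper merely unpacks the same bookkeeping by writing out the integral $D(L^{*}g)=\int_{M}g\,\overline{f}\,\intd\mu$ explicitly, but there is no difference in substance.
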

\begin{proof}
Suppose that $L^{*}$ is GH. Let $D\in\mathcal{D}'(M)$ and $f\in C^{\infty}(M)$ be such that $LD = f$. It follows that
\begin{align*}
\int_{M}g(x)\overline{f}(x)\intd\mu(x) = D(L^{*}g)
\end{align*}
or $(L^{*})'D = \overline{f}\mu\in\Omega^{d}(M)$ which implies, since $L^{*}$ is GH, that $D\in\Omega^{d}(M)$. Noting that any $\omega\in\Omega^{d}(M)$ can be written $\omega = \overline{f}_{\omega}\mu$ with $f_{\omega}\in C^{\infty}(M)$ the first direction of the lemma follows. Conversely, suppose that any distributional solution $LD = u\in C^{\infty}(M)$ is smooth. Note that $LD = (L^{*})'D$. Let $\omega\in\Omega^{d}(M)$, we write $\omega = \overline{f}_{\omega}\mu$ for some $f_{\omega}\in C^{\infty}(M)$. If $(L^{*})'D = \omega$ then $LD = f_{\omega}$, so $D\in C^{\infty}(M)$. That is, we can write $D = \overline{g}\mu$ for some $g\in C^{\infty}(M)$ which proves the lemma since $\overline{g}\mu\in\Omega^{d}(M)$.

The last claim of the lemma follows since $(L^{*})^{*} = L$.
\end{proof}
We have the following important properties of GH operators, proved in Appendix \ref{Appendix:A}.
\begin{lemma}\label{L:BackgroundTameEstimates2}
Let $\mu$ be a volume form on $M$ and let $L:C^{\infty}(M)\to C^{\infty}(M)$ be an operator with adjoint $L^{*}:C^{\infty}(M)\to C^{\infty}(M)$. If $L^{*}$ is {\rm GH} then $\dim\ker L < \infty$ and $L$ has closed image. If $L$ is also {\rm GH} then $\dim{\rm coker}(L) = \dim\ker L^{*} < \infty$. In particular, if $L$ is self-adjoint, $L^{*} = L$, and $L$ is {\rm GH} then $\dim\ker L = \dim{\rm coker}L < \infty$.
\end{lemma}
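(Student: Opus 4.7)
The plan is to extract a hypoelliptic a priori estimate from the GH hypothesis via the closed graph theorem, and then feed it into a Rellich compactness argument. Fix Sobolev norms $\norm{\cdot}_{n}$ on $C^{\infty}(M)$. Since $L^{*}:C^{\infty}(M)\to C^{\infty}(M)$ is continuous, the distributional extension $L:H^{-N}(M)\to H^{-m(N)}(M)$ is bounded for each $N$. The space $X_{0}=\{D\in L^{2}(M):LD\in C^{\infty}(M)\}$, equipped with the graph Fréchet topology from $\norm{D}_{0}$ and the Sobolev norms of $LD$, embeds into $C^{\infty}(M)$ by GH of $L^{*}$ and Lemma \ref{L:BackgroundTameEstimates1}; the closed graph theorem then yields, for each $n$, constants $m=m(n)$ and $C_{n}$ with
\begin{equation}
\norm{u}_{n}\leq C_{n}\bigl(\norm{Lu}_{m}+\norm{u}_{0}\bigr),\qquad u\in C^{\infty}(M).
\end{equation}

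For the finite-dimensional kernel, the estimate restricted to $u\in\ker L$ reads $\norm{u}_{1}\leq C\norm{u}_{0}$, so the $L^{2}$-unit ball of $\ker L$ is $H^{1}$-bounded and hence $L^{2}$-precompact by Rellich. The subspace $\ker L$ is also $L^{2}$-closed, since an $L^{2}$-limit of kernel elements lies in the distributional kernel of $L$, which equals $\ker L$ by GH. Riesz then forces $\dim\ker L<\infty$.

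For the closed image, set $V=(\ker L)^{\perp}\cap C^{\infty}(M)$, a closed Fréchet complement to $\ker L$. A standard contradiction argument upgrades the a priori estimate to $\norm{u}_{n}\leq C_{n}'\norm{Lu}_{m(n)}$ on $V$ for each $n\geq 1$: a violating sequence would, by the previous estimate, be bounded below in $L^{2}$ and bounded in $H^{n}$, so Rellich compactness would extract a nonzero $L^{2}$-limit lying in $V\cap\ker L=\{0\}$. The map $L|_{V}:V\to C^{\infty}(M)$ is therefore a topological embedding, so ${\rm Im}(L)=L(V)$ is Fréchet in its subspace topology and hence closed.

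Finally, assuming $L$ is also GH, applying the first half of the lemma to $L^{*}$ (using $(L^{*})^{*}=L$) gives $\dim\ker L^{*}<\infty$. Closedness of ${\rm Im}(L)$ makes ${\rm coker}(L)$ a Hausdorff Fréchet space, whose continuous dual is the annihilator of ${\rm Im}(L)$ in $\mathcal{D}'(M)$; this unwinds to the distributional kernel of $L^{*}$, and by GH of $L$ with Lemma \ref{L:BackgroundTameEstimates1} this equals $\ker L^{*}$. A Hausdorff Fréchet space with finite-dimensional continuous dual is itself of the same finite dimension, yielding $\dim{\rm coker}(L)=\dim\ker L^{*}$; the self-adjoint case is immediate. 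The main technical obstacle I anticipate is the opening closed graph step, since $\mathcal{D}'(M)$ is not Fréchet as a whole and one must filter by the Banach spaces $H^{-N}(M)$ to obtain a genuine Sobolev estimate; everything after that is standard Hilbert-space functional analysis.
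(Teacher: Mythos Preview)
Your proof is correct and follows essentially the same strategy as the paper's: extract an a priori Sobolev estimate from the GH hypothesis via the closed graph theorem, combine it with Rellich compactness to obtain the finite kernel and the upgraded estimate on $(\ker L)^{\perp}$, and then deduce closed image and the cokernel identification by duality/Hahn--Banach. The paper works instead with the estimate $\norm{u}_{0}\leq C(\norm{u}_{-1}+\norm{Lu}_{s})$ and closes the image by an explicit weak-$L^{2}$ limit argument rather than your topological-embedding observation, but the skeleton of the argument is the same.
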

\begin{remark}
This theorem is proved in \cite{ForniGLWdim3} with $L$ given by the differential operator induced by a vector field, and in \cite{ChenChiEqGW} for vector fields on tori. The proof we give is essentially the same but for a more general class of operators that includes, in particular, the orbitwise laplacian of a smooth action, see Definition \ref{def:OrbitwiseLaplacian}.
\end{remark}
\begin{definition}\label{Def:BackgroundTameEstimates2}
We say that a linear operator $L:F\to F$ on a graded Fréchet space $F$ (see \cite{Hamilton}) satisfy a tame estimate if there are integers $b,r_{0}\in\mathbb{N}_{0}$ and constants $C_{r}$ such that for $r\geq b$ we have
\begin{align*}
\norm{Lv}_{r}\leq C_{r}\norm{v}_{r + r_{0}}.
\end{align*}
\end{definition}

\subsection{The orbitwise laplacian}

Let $G$ be a Lie group with Lie algebra $\mathfrak{g}$. We fix an inner product $\langle\cdot,\cdot\rangle$ on $\mathfrak{g}$ (when $G = \mathbb{R}^{k}$ we will always choose the inner product on $\mathbb{R}^{k}$ to be the standard inner product). Let $\alpha:G\times M\to M$ be locally free action on a closed smooth manifold $M$.
\begin{definition}\label{def:OrbitwiseLaplacian}
We define the orbitwise laplacian of $\alpha$ by
\begin{align}\label{Eq:OrbitwiseLaplacian1}
\Delta_{\alpha}u = -\sum_{j = 1}^{k}X_{j}^{2}u,\quad u\in C^{\infty}(M)
\end{align}
where $X_{1},...,X_{k}\in\mathfrak{g}$ is an orthonormal basis.
\end{definition}
\begin{remark}
It is standard that \ref{Eq:OrbitwiseLaplacian1} does not depend on the choice of ON-basis. If $G = \mathbb{R}^{k}$ we always choose $X_{j}$ as the $j$th unit vector.
\end{remark}
\begin{remark}\label{Rmk:SelfAdjointness}
It is immediate that if $\alpha$ preserves a volume form $\mu$, then $\Delta_{\alpha}$ is self-adjoint. That is, $\Delta_{\alpha}$ is an adjoint to $\Delta_{\alpha}$.
\end{remark}
\begin{remark}
The orbitwise laplacian coincides with the leafwise laplacian of the orbit foliation of $\alpha$, if we give the orbit foliation a $G-$invariant metric. We prefer to use separate terminology since one can also define an orbitwise laplacian for a $\mathbb{Z}^{k}-$action, but in this case there is no leafwise laplacian.
\end{remark}
\begin{definition}\label{Def:OrbitwiseLaplacian2}
Let $\alpha$ be a locally free, smooth $G-$action. We say that $\alpha$ is Globally Hypoelliptic $({\rm GH})$ if the orbitwise laplacian $\Delta_{\alpha}$ is globally hypoelliptic.
\end{definition}
Abelian actions with globally hypoelliptic orbitwise laplacian (or simply ${\rm GH}$ actions) will be the main object of study in this paper.

\subsection{Nilpotent Lie groups}

Let $G$ be a connected, simply connected Lie group with Lie algebra $\mathfrak{g}$. We define the \textit{lower central series}, $\mathfrak{g}_{(j)}$, of $\mathfrak{g}$ inductively by
\begin{align*}
\mathfrak{g}_{(1)} = \mathfrak{g},\quad\mathfrak{g}_{(j+1)} = [\mathfrak{g},\mathfrak{g}_{(j)}].
\end{align*}
If $\mathfrak{g}_{(N)} = 0$ for some $N$ then we say that $\mathfrak{g}$, and $G$, are nilpotent. If $\ell$ is the maximal integer such that $\mathfrak{g}_{(\ell)}\neq0$ then $\ell$ is the step of $\mathfrak{g}$ and $G$.

On any Lie group $G$ there exists a unique (up to scaling) left-invariant volume form $\mu$ \cite{HarmonicAnalysis}, the Haar measure of $G$. Let $\Gamma\leq G$ be a discrete subgroup. We say that $\Gamma$ is a lattice if there is a right-translation-invariant measure $\mu_{\Gamma}$ on $\Gamma\setminus G$ such that $\mu_{\Gamma}(\Gamma\setminus G) < \infty$ (if such a measure exists, then we will assume that it is normalized $\mu_{\Gamma}(\Gamma\setminus G) = 1$, this normalization makes the measure unique). If $\Gamma\leq G$ is a lattice and $\Gamma\setminus G$ is compact, then $\Gamma$ is a uniform lattice. In the case of nilpotent Lie groups every lattice is uniform \cite{CorwinGreenleaf}.

Let $G$ be a simply connected nilpotent Lie group. Given a lattice $\Gamma\leq G$ we will denote the associated \textit{compact nilmanifold} by 
\begin{align*}
M_{\Gamma} := \Gamma\setminus G.
\end{align*}
If $\mathfrak{g}_{\mathbb{Q}}$ is a rational Lie algebra such that $\mathfrak{g}_{\mathbb{Q}}\otimes\mathbb{R} = \mathfrak{g}$ then we say that $\mathfrak{g}_{\mathbb{Q}}$ is a rational structure in $\mathfrak{g}$. There is a $1-1$ correspondence between rational structures of $\mathfrak{g}$ and commensurability classes of lattices in $G$ \cite{CorwinGreenleaf}. In particular, if we fix a lattice $\Gamma$ of $G$ then there is an associated rational structure on $\mathfrak{g}$\footnote{which can be defined as ${\rm span}_{\mathbb{Q}}(\log\Gamma)$ where $\log:G\to\mathfrak{g}$ is the inverse of the exponential map}. When a lattice is fixed, we will always consider the rational structure induced by the lattice as $\mathfrak{g}_{\mathbb{Q}}$.

We will be especially interested in two examples of nilpotent Lie groups.
\begin{example}[Heisenberg group]\label{Ex:HeisenberGroup}
We define 
\begin{align*}
\mathfrak{h}^{g} := \text{span}(X_{1},...,X_{g},Y_{1},...,Y_{g},Z)
\end{align*}
with brackets $[X_{i},Y_{i}] = Z$. Then $\mathfrak{h}^{g}$ is a $2-$step nilpotent Lie algebra. Let $H^{g}$ be the associated simply connected Lie group. We call $H^{g}$ the $g$th Heisenberg group (or the $(2g + 1)-$dimensional Heisenberg group).
\end{example}
\begin{example}[Quasi-abelian nilpotent Lie groups]\label{Ex:ModelFiliform}
Following the notation in \cite{FlaminioForni2023} we say that a simply connected nilpotent Lie group $G$ is \textit{quasi-abelian} if $G$ has a codimension $1$ normal abelian subgroup. The quasi-abelian nilpotent Lie groups can be constructed explicitly on the Lie algebra level as follows. Let $n_{1},...,n_{\ell}\in\mathbb{N}$ be integers, $A_{j}:\mathbb{R}^{n_{j}}\to\mathbb{R}^{n_{j+1}}$, $j < \ell$, be surjective maps, and let $A_{\ell}:\mathbb{R}^{n_{\ell}}\to0$ be the zero map. We define:
\begin{align}
\Hat{\mathfrak{g}} = \mathbb{R}\oplus\mathbb{R}^{n_{1}}\oplus...\oplus\mathbb{R}^{n_{\ell}}
\end{align}
and let $X$ generate the first direction. For $Y_{j}\in\mathbb{R}^{n_{j}}$ we define brackets in $\Hat{\mathfrak{g}}$ by:
\begin{align}
[X,Y_{j}] = A_{j}Y_{j}\in\mathbb{R}^{n_{j+1}}.
\end{align}
Then the Lie algebra of any quasi-abelian nilpotent Lie group can be written as $\mathfrak{g} = \Hat{\mathfrak{g}}\oplus\mathbb{R}^{N}$ for some integer $N$. Moreover, if $\mathfrak{g}$ is quasi-abelian with some rational structure $\mathfrak{g}_{\mathbb{Q}}$ then we can always make the identification as above such that the maps $A_{j}$ are rational (Lemma \ref{L:RationalityOfQuasiAbelianGroups}).
\end{example}

\section{Properties of the orbitwise laplacian}

In the remainder of this section, let $M$ be a closed, smooth $d-$dimensional manifold and $\alpha:G\to{\rm Diff}^{\infty}(M)$ a smooth action. The following lemma will be used in the remainder.
\begin{lemma}\label{L:InvarianceImpliesKernelOfLaplacian}
The orbitwise laplacian is formally self-adjoint with respect to any $\alpha-$invariant measure. If $\alpha$ preserve a volume form, then a function $u\in C^{\infty}(M)$ is $\alpha-$invariant if and only if $\Delta_{\alpha}u = 0$.
\end{lemma}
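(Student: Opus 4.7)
The plan is to get both statements from the same integration-by-parts identity. The starting point is the observation that $\alpha-$invariance of $\mu$ means exactly that each generator $X_j$ preserves $\mu$ in the sense $\int_M X_j(f)\,\intd\mu = 0$ for all $f\in C^\infty(M)$. I would apply this to the product $u\bar v$, which gives
\begin{align*}
0 = \int_M X_j(u\bar v)\,\intd\mu = \int_M (X_j u)\bar v\,\intd\mu + \int_M u\,\overline{X_j v}\,\intd\mu,
\end{align*}
so the formal $L^2(\mu)-$adjoint of $X_j$ is $-X_j$. Applying this twice yields $\int (X_j^2 u)\bar v\,\intd\mu = \int u\,\overline{X_j^2 v}\,\intd\mu$, so $X_j^2$ is formally self-adjoint. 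Summing over $j$ shows $\Delta_\alpha = -\sum_j X_j^2$ is formally self-adjoint with respect to $\mu$.

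For the second claim, one direction is immediate: if $u$ is $\alpha-$invariant, then $X u = 0$ for every $X\in\mathfrak{g}$, hence in particular $X_j u = 0$ for all $j$, so $\Delta_\alpha u = 0$. Conversely, suppose $\alpha$ preserves the volume form $\mu$ and $\Delta_\alpha u = 0$. Pairing with $\bar u$ and using the integration-by-parts identity above gives
\begin{align*}
0 = \int_M (\Delta_\alpha u)\bar u\,\intd\mu = -\sum_{j=1}^{k}\int_M (X_j^2 u)\bar u\,\intd\mu = \sum_{j=1}^{k}\int_M |X_j u|^2\,\intd\mu,
\end{align*}
so each $X_j u$ vanishes identically. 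Since $X_1,\dots,X_k$ span $\mathfrak{g}$, we conclude $X u = 0$ for every $X\in\mathfrak{g}$. Then $u\circ\alpha(\exp(tX)) = u$ for all $t\in\mathbb{R}$ and $X\in\mathfrak{g}$, and since products of one-parameter subgroups generate the identity component of $G$, $u$ is invariant under the full action (of the identity component; this is all that can be extracted and also all that $\alpha-$invariance requires in this setting, since $\Delta_\alpha$ only sees the Lie algebra).

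I do not anticipate a real obstacle here: the only thing to watch is that the pointwise formula $\Delta_\alpha = -\sum X_j^2$ does not a priori require $\mu$ to be a top form, but to write $\int|X_j u|^2\,\intd\mu\geq 0$ and conclude $X_j u = 0$ from vanishing of the integral one needs $\mu$ to be a genuine (non-negative) volume form, which is exactly the hypothesis imposed for the second statement.
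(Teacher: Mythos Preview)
Your proof is correct and follows essentially the same approach as the paper: both use the integration-by-parts identity coming from $\alpha$-invariance of $\mu$ to establish formal self-adjointness, and then pair $\Delta_\alpha u$ with $\bar u$ to obtain $\sum_j\int_M|X_j u|^2\,\intd\mu = 0$, concluding $X_j u \equiv 0$ from the fact that $\mu$ is a volume form (hence everywhere supported). Your closing remark about needing $\mu$ to be a genuine volume form is exactly the point the paper makes when it passes from ``$X_j u = 0$ almost surely'' to ``$X_j u = 0$ everywhere''.
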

\begin{proof}
The first claim was pointed out in Remark \ref{Rmk:SelfAdjointness}. Let $u,v\in C^{\infty}(M)$ and let $\mu$ be any $\alpha-$invariant ergodic measure (by the ergodic decomposition it suffices to consider ergodic measures). Let $X\in\mathfrak{g}$ be a generator of $\alpha$, then $\alpha-$invariance of $\mu$ and the product rule for differentiation shows
\begin{align}
\int_{M}(Xu)v\intd\mu = -\int_{M}u(Xv)\intd\mu,\quad\int_{M}(X^{2}u)v\intd\mu = -\int_{M}(Xu)(Xv)\intd\mu = \int_{M}u(X^{2}v)\intd\mu,
\end{align}
which implies that $\Delta_{\alpha}$ is formally self-adjoint. If $u\in C^{\infty}(M)$ is $\alpha-$invariant then clearly $\Delta_{\alpha}u = 0$. Conversely, if $\Delta_{\alpha}u = 0$ and $\mu$ is an $\alpha-$invariant volume form, then
\begin{align}
0 = \int_{M}u\Delta_{\alpha}\overline{u}\intd\mu = \sum_{j = 1}^{k}\int_{M}X_{j}u\cdot X_{j}\overline{u}\intd\mu = \sum_{j = 1}^{k}\int_{M}|X_{j}u|^{2}\intd\mu
\end{align}
so $X_{j}u = 0$ almost surely with respect to $\mu$. Since $\mu$ is supported everywhere, it follows that $X_{j}u = 0$ so $u$ is $\alpha-$invariant.
\end{proof}
The following Lemma is contained in \cite{DanijelaGH1,DanijelaGH2}, we include a proof for completeness.
\begin{theorem}\label{Thm:PropertiesOrbitLaplacian1}
Let $G$ be a connected Lie group and let $\alpha:G\times M\to M$ a ${\rm GH}-$action. If $\alpha$ preserves an ergodic volume form $\mu$, then the space of $\alpha-$invariant distributions is spanned by $\mu$ (and is in particular $1-$dimensional). If $G\cong\mathbb{R}^{k}$, then $\alpha$ preserve an ergodic volume.
\end{theorem}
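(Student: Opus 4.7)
For the first claim, suppose $D\in\mathcal{D}'(M)$ is $\alpha$-invariant. Unpacking the definition gives $D(X_j f) = 0$ for every generator $X_j$ and every $f\in C^\infty(M)$, so $\Delta_\alpha' D = 0$. Since $\alpha$ is ${\rm GH}$ and $0\in\Omega^d(M)$, Definition \ref{Def:BackgroundTameEstimates1} forces $D\in\Omega^d(M)$. As $\mu$ is a volume form I can write $D = h\mu$ with $h\in C^\infty(M)$, and the $\alpha$-invariance of both $D$ and $\mu$ then forces $h$ to be $\alpha$-invariant as a smooth function. Lemma \ref{L:InvarianceImpliesKernelOfLaplacian} gives $\Delta_\alpha h = 0$, and ergodicity of $\mu$ forces $h$ to be constant, so $D\in\mathbb{C}\mu$.

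For the second claim, fix any reference volume form $\mu_0$ on $M$. By Krylov--Bogolyubov (valid since $\mathbb{R}^k$ is amenable), every nonempty closed $\alpha$-invariant subset of $M$ carries an $\alpha$-invariant probability measure. The crucial observation, proved by the same argument as in the first claim, is that every such invariant probability $\nu$ is automatically a smooth nonnegative top form $h\mu_0$; in particular $\mathrm{supp}(\nu) = \overline{\{h>0\}}$ has nonempty interior. Moreover, Lemma \ref{L:BackgroundTameEstimates2} applied to $\Delta_\alpha^*$ (whose adjoint $\Delta_\alpha$ is ${\rm GH}$) gives $\dim\ker\Delta_\alpha^* < \infty$, and unwinding via the embedding $g\mapsto\overline{g}\mu_0$ together with the ${\rm GH}$ property shows the space of $\alpha$-invariant distributions is finite-dimensional. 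Consequently $\alpha$ has only finitely many ergodic invariant probability measures $\mu_1 = f_1\mu_0,\dots,\mu_N = f_N\mu_0$.

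I would then show $\bigcup_{i} \mathrm{supp}(\mu_i) = M$: otherwise a Krylov--Bogolyubov measure on the closed invariant set $\overline{M\setminus\bigcup_{i}\mathrm{supp}(\mu_i)}$ would, by ergodic decomposition, be a convex combination of the $\mu_i$, forcing its support into the closed empty-interior boundary $\partial\bigcup_{i}\mathrm{supp}(\mu_i)$ and contradicting the crucial observation. Setting $\mu = N^{-1}\sum_{i}\mu_i = f\mu_0$, the open $\alpha$-invariant set $\{f>0\}$ is therefore dense in $M$, so $\{f=0\}$ is closed, $\alpha$-invariant, with empty interior; if it were nonempty, a Krylov--Bogolyubov measure on it would again be a nonzero smooth top form supported in an empty-interior set, a contradiction. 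Hence $f>0$ everywhere and $\mu$ is an $\alpha$-invariant volume form. For ergodicity, distinct ergodic measures are mutually singular, which for the smooth densities $f_i$ forces $\{f_i>0\}\cap\{f_j>0\} = \emptyset$ for $i\neq j$; since these are open $\alpha$-invariant sets whose disjoint union equals $\{f>0\} = M$, connectedness of $M$ gives $N=1$, and the unique ergodic measure is the desired volume form.

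The main obstacle is the middle step: upgrading the invariant probability measures given by amenability to a genuine volume form. The essential inputs are the finite-dimensionality of the space of $\alpha$-invariant distributions (via Lemma \ref{L:BackgroundTameEstimates2}) and the rigidity fact that any such invariant distribution, being a smooth top form by ${\rm GH}$, has support with nonempty interior.
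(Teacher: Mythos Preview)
Your proof is correct. For the first claim it is essentially the paper's argument: an $\alpha$-invariant distribution is annihilated by $\Delta_\alpha'$, hence by the GH hypothesis is a smooth top form $h\mu$, and invariance together with ergodicity force $h$ to be constant.

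For the second claim the paper takes a shorter route. Both arguments start from the same crucial observation---every invariant probability is a smooth top form, so its support has nonempty interior, and distinct ergodic measures have disjoint open ``positivity sets'' $U_\nu=\{\omega_\nu\neq 0\}$. The paper then argues directly: if there were at least two ergodic measures, the closed invariant set $C=\bigcap_\nu U_\nu^c$ would (by connectedness of $M$, since the $U_\nu$ are disjoint nonempty open sets) be nonempty and would support a further ergodic measure, contradicting $C\cap U_\nu=\emptyset$ for every $\nu$. So there is a unique ergodic measure, and repeating the same complement argument once more shows its positivity set is all of $M$. Your version instead invokes Lemma~\ref{L:BackgroundTameEstimates2} to get \emph{finitely} many ergodic measures, builds their average as a candidate volume form, runs the support argument twice (first to get $\bigcup_i\mathrm{supp}(\mu_i)=M$, then to get $f>0$), and only at the end uses connectedness to conclude $N=1$. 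This is valid, but the appeal to Lemma~\ref{L:BackgroundTameEstimates2} is not needed---disjointness of the open sets $U_\nu$ already bounds the number of ergodic measures, and in fact the paper's argument never even needs that bound to be finite---and the two support-arguments you run are doing the same work as the paper's single one. The paper's organization (uniqueness first, then volume) avoids both detours.
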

\begin{remark}\label{Rmk:MinimalityRemark}
When $G$ is abelian Theorem \ref{Thm:PropertiesOrbitLaplacian1} shows that $\alpha$ is, in particular, uniquely ergodic (and minimal since the unique invariant measure is everywhere supported). In the remainder, $\mu$ will always denote the unique $\alpha-$invariant measure when $\alpha$ is GH and $G\cong\mathbb{R}^{k}$.
\end{remark}
\begin{proof}
Let $D$ be a $\alpha-$invariant distribution, then $\Delta_{\alpha}D = 0$. Since $\Delta_{\alpha}$ is GH $D$ is represented by some smooth top form $\omega\in\Omega^{d}(M)$. If $\alpha$ preserve a volume form $\mu$, then we can write $\omega = \varphi\mu$ and
\begin{align}
0 = \int_{M}(\Delta_{\alpha}u)(x)\varphi(x)\intd\mu(x) = \int_{M}u(x)(\Delta_{\alpha}\varphi)(x)\intd\mu(x),\quad u\in C^{\infty}(M)
\end{align}
which implies that $\Delta_{\alpha}\varphi = 0$. By Lemma \ref{L:InvarianceImpliesKernelOfLaplacian} this implies that $\varphi$ is $\alpha-$invariant, which implies that $\varphi$ is constant by ergodicity (and the fact that $\mu$ is supported everywhere).

If $G\cong\mathbb{R}^{k}$ then there is at least on $\alpha-$invariant measure. Any $\alpha-$invariant measure, $\nu$, defines a $\alpha-$invariant distribution by integration. It follows that any $\alpha-$invariant measure $\nu$ is represented by some top form $\omega_{\nu}$. Since the support of any top form contain an open set there are at most countably many $\alpha$-invariant ergodic measures. Define the open set $U_{\nu} = \{\omega_{\nu}\neq0\}$ for each $\alpha-$invariant ergodic measure $\nu$, then $U_{\nu_{1}}\cap U_{\nu_{2}} = \emptyset$ for $\nu_{1}\neq\nu_{2}$. If there are at least two $\alpha-$invariant ergodic measures then the set
\begin{align}
C = \bigcap_{\nu}U_{\nu}^{c},\quad\nu\text{ is ergodic}
\end{align}
is compact, non-empty, and $\alpha-$invariant. But then $C$ supports an $\alpha$-invariant ergodic measure, which is a contradiction. We conclude that $\alpha$ has a unique invariant measure $\mu$. The set $U_{\mu}^{c}$ is $\alpha-$invariant and compact, so if it is non-empty it support an $\alpha-$invariant measure. This would contradict unique ergodicity of $\alpha$, so $U_{\mu}^{c} = \emptyset$ or $U_{\mu} = M$. We conclude that $\omega_{\mu}$ is a volume form.
\end{proof}
\begin{theorem}\label{Thm:PropertiesOrbitLaplacian2}
Let $G$ be a connected Lie group and $\alpha:G\times M\to M$ an action with invariant ergodic volume. Then the action $\alpha$ is ${\rm GH}$ if and only if ${\rm Im}(\Delta_{\alpha})$ has codimension $1$. More precisely we have 
\begin{align*}
{\rm Im}(\Delta_{\alpha}) = C_{0}^{\infty}(M) = \left\{f\in C^{\infty}(M)\text{ : }\int_{M}f\intd\mu = 0\right\}
\end{align*}
where $\mu$ is the unique $\alpha-$invariant measure.
\end{theorem}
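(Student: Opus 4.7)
The plan is to use self-adjointness of $\Delta_\alpha$ with respect to the invariant volume $\mu$ (Remark \ref{Rmk:SelfAdjointness}) and reduce both implications to combining Lemma \ref{L:BackgroundTameEstimates2} (for the forward direction) with a duality argument (for the converse) and Lemma \ref{L:InvarianceImpliesKernelOfLaplacian}. The inclusion $\mathrm{Im}(\Delta_\alpha) \subseteq C_0^\infty(M)$ is immediate in both directions: by self-adjointness,
\[
\int_M \Delta_\alpha u \, \intd\mu \;=\; \int_M u\cdot \Delta_\alpha 1 \, \intd\mu \;=\; 0
\]
for every $u\in C^\infty(M)$.

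For the forward implication I would assume $\alpha$ is \textrm{GH} and apply Lemma \ref{L:BackgroundTameEstimates2} to the self-adjoint $\Delta_\alpha$, obtaining $\dim\ker\Delta_\alpha = \dim\mathrm{coker}(\Delta_\alpha) < \infty$ together with closedness of $\mathrm{Im}(\Delta_\alpha)$. By Lemma \ref{L:InvarianceImpliesKernelOfLaplacian} and ergodicity of $\mu$, the smooth kernel consists only of constants, so both dimensions equal $1$; the inclusion above then forces $\mathrm{Im}(\Delta_\alpha) = C_0^\infty(M)$. For the converse, the codimension hypothesis combined with the same inclusion again yields $\mathrm{Im}(\Delta_\alpha) = C_0^\infty(M)$, which is closed as the kernel of the continuous functional $u\mapsto\int u\,\intd\mu$. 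By Lemma \ref{L:BackgroundTameEstimates1}, to prove \textrm{GH} it suffices to show that every distributional solution of $\Delta_\alpha D = u$ with $u\in C^\infty(M)$ is smooth. Testing this equation against $1\in\ker\Delta_\alpha$ forces $\int u\,\intd\mu = 0$, so $u\in\mathrm{Im}(\Delta_\alpha)$; choosing a smooth $v$ with $\Delta_\alpha v = u$ and subtracting the distribution represented by $v$ from $D$ reduces the problem to showing that every distributional element of $\ker\Delta_\alpha$ is smooth.

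To finish, the key step is the duality observation that, by self-adjointness, the distributional kernel of $\Delta_\alpha$ coincides with the annihilator $\mathrm{Im}(\Delta_\alpha)^{\circ}\subset\mathcal{D}'(M)$, and this annihilator is one-dimensional and spanned by $\mu$ itself, precisely because $\mathrm{Im}(\Delta_\alpha) = C_0^\infty(M)$ is a closed subspace of codimension $1$ in the Fréchet space $C^\infty(M)$. Hence $D$ differs from a smooth top form by a scalar multiple of $\mu$, so $D\in\Omega^d(M)$. The only delicate point is this final duality step; it is a routine application of Hahn--Banach in the Fréchet pairing $(C^\infty(M),\mathcal{D}'(M))$, but it is the one place where careful functional-analytic bookkeeping, rather than direct manipulation, is required.
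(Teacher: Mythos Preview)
Your proof is correct and follows essentially the same route as the paper. The forward direction is identical (self-adjointness, Lemma~\ref{L:BackgroundTameEstimates2}, Lemma~\ref{L:InvarianceImpliesKernelOfLaplacian}, then compare codimensions). For the converse, the paper also reduces to the distributional kernel by subtracting a smooth solution, but then argues more directly: for $v\in C_0^\infty(M)$ one writes $v=\Delta_\alpha w$ and computes $(D-f)(v)=(D-f)(\Delta_\alpha w)=\Delta_\alpha(D-f)(w)=0$, whence $D-f$ is determined by its value on $1$ and so equals $D(1)\cdot\mu$. This is exactly your annihilator observation, only stated concretely; in particular Hahn--Banach is not needed, since any distribution vanishing on the closed codimension-$1$ subspace $C_0^\infty(M)$ is a scalar multiple of $\mu$ simply because $C^\infty(M)=C_0^\infty(M)\oplus\mathbb{C}\cdot 1$.
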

\begin{proof}
Let $\alpha$ be GH. By Theorem \ref{Thm:PropertiesOrbitLaplacian1} the space of $\alpha-$invariant distributions is $1-$dimensional. This implies, in particular, that the kernel $\ker\Delta_{\alpha}$ has dimension $1$ (since every $u\in\ker\Delta_{\alpha}$ defines a $\alpha-$invariant distribution by $\overline{u}\mu\in\mathcal{D}'(M)$). By Lemma \ref{L:BackgroundTameEstimates2} it follows that 
\begin{align}
\dim(C^{\infty}(M)/{\rm Im}(\Delta_{\alpha})) = 1.
\end{align}
Note that $\Delta_{\alpha}u\in\ker\mu$ for every $u\in C^{\infty}(M)$, so since 
\begin{align}
\dim(C^{\infty}(M)/{\rm Im}(\Delta_{\alpha})) = \dim(C^{\infty}(M)/\ker\mu) = 1
\end{align}
we have ${\rm Im}(\Delta_{\alpha}) = \ker\mu$. To show the converse, suppose that $D\in\mathcal{D}'(M)$ satisfy $\Delta_{\alpha}D = g\in C^{\infty}(M)$. Then, we find $f\in C^{\infty}(M)$ such that $\Delta_{\alpha}f = g$ (note that $g$ has zero integral since the integral of $g$ coincide with its pairing to $1$). So, $\Delta_{\alpha}(D - f) = 0$. For $v\in C_{0}^{\infty}(M)$ we find $u\in C_{0}^{\infty}(M)$ so that $\Delta_{\alpha}u = v$, so $(D-f)(v) = (D-f)(\Delta_{\alpha}u) = \Delta_{\alpha}(D - f)u = 0$. It follows that $D = f$ on $C_{0}^{\infty}(M)$, which implies $D = f + D(1)\in C^{\infty}(M)$ so $\alpha$ is GH by Lemma \ref{L:BackgroundTameEstimates1}.
\end{proof}
\begin{lemma}\label{L:PropertiesOrbitLaplacian2}
Let $\alpha:G\times M\to M$ be ${\rm GH}$ with an invariant ergodic volume $\mu$, $\beta:G\times B\to B$ a $G-$action and $\pi:M\to B$ a submersion such that $\pi\circ\alpha(g) = \beta(g)\circ\pi$. Then $\beta$ is ${\rm GH}$ with a unique invariant ergodic volume $\nu = \pi_{*}\mu$.
\end{lemma}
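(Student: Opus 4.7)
The plan is to apply Theorem~\ref{Thm:PropertiesOrbitLaplacian2} to $\beta$, which reduces the verification of GH to showing that $\beta$ has an invariant ergodic volume and that $\mathrm{Im}(\Delta_\beta)$ equals the kernel of integration against that volume.

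First I would verify that $\nu:=\pi_*\mu$ is a smooth $\beta$-invariant ergodic volume form on $B$. Smoothness and positivity of the density follow because $\pi$ is a submersion from a compact manifold, so fiber integration of a volume form produces a smooth, strictly positive measure on $B$. Invariance is immediate from $\pi\circ\alpha(g)=\beta(g)\circ\pi$. For ergodicity, if $A\subset B$ is a $\beta$-invariant measurable set, then $\pi^{-1}(A)$ is $\alpha$-invariant with $\mu(\pi^{-1}(A))=\nu(A)$, so $0<\nu(A)<1$ would contradict ergodicity of $\mu$.

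Next I would establish the intertwining identity $\Delta_\alpha(f\circ\pi)=(\Delta_\beta f)\circ\pi$ for $f\in C^\infty(B)$. Writing $X_1,\dots,X_k$ for an orthonormal basis of $\mathfrak{g}$ and $X_j^M,X_j^B$ for the corresponding vector fields on $M$ and $B$, the equivariance $\pi\circ\alpha=\beta\circ\pi$ gives $\pi_*X_j^M=X_j^B\circ\pi$, hence $X_j^M(f\circ\pi)=(X_j^Bf)\circ\pi$; iterating this one step and summing yields the claim.

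The third and main step is to show $\mathrm{Im}(\Delta_\beta)=\{g\in C^\infty(B):\int_B g\,d\nu=0\}$. I would introduce the fiber-integration map $h\mapsto\tilde h$ from $C^\infty(M)$ to $C^\infty(B)$, defined by the duality
\[
\int_B \phi\,\tilde h\,d\nu=\int_M(\phi\circ\pi)h\,d\mu,\quad \phi\in C^\infty(B),
\]
which is well defined and smooth because the fibers of $\pi$ are compact submanifolds of $M$. Combining the intertwining identity with self-adjointness of $\Delta_\alpha$ relative to $\mu$ (Remark~\ref{Rmk:SelfAdjointness}) and of $\Delta_\beta$ relative to $\nu$, a short duality calculation gives $\widetilde{\Delta_\alpha h}=\Delta_\beta\tilde h$. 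Given $g\in C^\infty(B)$ with $\int g\,d\nu=0$, we have $\int_M(g\circ\pi)\,d\mu=0$, so by Theorem~\ref{Thm:PropertiesOrbitLaplacian2} applied to the GH action $\alpha$ there exists $h\in C^\infty(M)$ with $\Delta_\alpha h=g\circ\pi$; then $\Delta_\beta\tilde h=\widetilde{g\circ\pi}=g$ (the last equality being a direct check from the definition of $\tilde{\,\,}$). The reverse inclusion $\mathrm{Im}(\Delta_\beta)\subset\ker\nu$ is immediate from self-adjointness. Theorem~\ref{Thm:PropertiesOrbitLaplacian2} applied to $\beta$ now concludes that $\beta$ is GH.

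The main obstacle is the fiber-integration step: one must confirm that $h\mapsto\tilde h$ lands in $C^\infty(B)$ and commutes correctly with the orbitwise laplacians. Compactness of the fibers (forced by compactness of $M$) ensures smoothness of fiber integration, after which the intertwining is a routine duality computation; everything else reduces to bookkeeping with $\pi_*$.
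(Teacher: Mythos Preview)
Your proposal is correct and follows essentially the same strategy as the paper: both define a fiber-integration map $\pi_*:C^\infty(M)\to C^\infty(B)$ dual to the pullback $\pi^*$, establish the intertwining $\pi_*\circ\Delta_\alpha=\Delta_\beta\circ\pi_*$, and exploit it to transfer solvability from $M$ to $B$. The only organizational difference is that you pass through Theorem~\ref{Thm:PropertiesOrbitLaplacian2} (showing ${\rm Im}(\Delta_\beta)=C_0^\infty(B)$ and invoking the equivalence), whereas the paper verifies the GH property directly at the distributional level by lifting a distribution $D$ on $B$ to $\tilde D=D\circ\pi_*$ on $M$, observing $\Delta_\alpha\tilde D=\pi^*g$, and using GH of $\alpha$ to conclude $\tilde D$ is smooth; this avoids a second appeal to Theorem~\ref{Thm:PropertiesOrbitLaplacian2} but is otherwise the same computation in dual form.
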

\begin{proof}
By Theorem \ref{Thm:PropertiesOrbitLaplacian1} the volume $\mu$ is the unique $\alpha-$invariant measure. Write $\nu = \pi_{*}\mu$. It is immediate that $\nu$ is $\beta-$invariant and ergodic. By disintegrating $\mu$ over $\pi$ we obtain a decomposition
\begin{align*}
\mu = \int_{B}\mu_{b}^{\pi}\intd\nu(b)
\end{align*}
where $\mu_{b}^{\pi}$ is a probability measure on $\pi^{-1}(b)$. Define $\pi_{*}:C^{\infty}(M)\to C^{\infty}(B)$ by fiber integration
\begin{align}
\pi_{*}(f)(b) = \int_{\pi^{-1}(b)}f(x)\intd\mu_{b}^{\pi}(x)
\end{align}
and $\pi^{*}$ by pull-back $\pi^{*}f = f\circ\pi$. The disintegration $(\mu_{b}^{\pi})_{b\in B}$ is unique, which implies $\alpha(g)_{*}\mu_{b}^{\pi} = \mu_{\beta(g)b}^{\pi}$. It follows that $\pi_{*}$ intertwines the action of $\alpha$ and $\beta$:
\begin{align*}
\pi_{*}(f)\circ\beta(g)(b) = & \int_{M}f(x)\intd\mu_{\beta(g)b}^{\pi}(x) = \int_{M}f(x)\intd\alpha(g)_{*}\mu_{b}^{\pi}(x) = \\ = &
\int_{M}f(\alpha(g)x)\intd\mu_{b}^{\pi}(x) = \pi_{*}(f\circ\alpha(g)).
\end{align*}
So, for $X\in\mathfrak{g}$ we have $X\pi_{*}(f) = \pi_{*}(Xf)$ and $\Delta_{\beta}\pi_{*}(f) = \pi_{*}(\Delta_{\alpha}f)$. Since $\nu$ is $\beta-$invariant it suffices to show (Lemma \ref{L:BackgroundTameEstimates1}) that if $\Delta_{\beta}D = g\in C^{\infty}(B)$ then $D = u\in C^{\infty}(B)$. Suppose that $\Delta_{\beta}D = g\in C^{\infty}(B)$. We define $\Tilde{D} = D\circ\pi_{*}\in\mathcal{D}'(M)$ and calculate $\Delta_{\alpha}\Tilde{D}$:
\begin{align*}
\Delta_{\alpha}\Tilde{D}(f) = & \Tilde{D}(\Delta_{\alpha}f) = D(\pi_{*}\Delta_{\alpha}f) = D(\Delta_{\beta}\pi_{*}f) = \Delta_{\beta}D(\pi_{*}f) = \\ = &
\int_{B}\left(\int_{M}f(x)\intd\mu_{b}^{\pi}(x)\right)\overline{g(b)}\intd\nu(b) = \\ = & 
\int_{B}\left(\int_{M}f(x)\overline{g(\pi(x))}\intd\mu_{b}^{\pi}(x)\right)\intd\nu(b) = \int_{M}f(x)\overline{\pi^{*}g}\intd\mu(x).
\end{align*}
So, $\Delta_{\alpha}\Tilde{D} = \pi^{*}g$. Since $\Delta_{\alpha}$ is GH it follows that $\Tilde{D} = u\in C^{\infty}(M)$. Finally,
\begin{align*}
D(f) = & D(\pi_{*}\pi^{*}f) = \Tilde{D}(\pi^{*}f) = \int_{M}f(\pi(x))\overline{u(x)}\intd\mu(x) = \\ = &
\int_{B}\left(\int_{M}f(b)\overline{u(x)}\intd\mu_{b}^{\pi}(x)\right)\intd\nu(b) = \int_{B}f(b)\overline{\pi_{*}u(b)}\intd\nu(b)
\end{align*}
so $D = \pi_{*}u\in C^{\infty}(B)$, which imlpies that $\beta$ is GH.
\end{proof}

\subsection{Orbitwise laplacian and cohomology}\label{SubSec:HodgeTheoryForDynamicalCohomology}

In the remainder of this section we are exclusively interested in abelian actions $\alpha:\mathbb{R}^{k}\times M\to M$. We fix such an action and recall that if $X\in\mathbb{R}^{k}$ then we identify $X$ with the associated vector field on $M$ that generates the flow $\alpha(tX)$, and if $u\in C^{\infty}(M)$ then $Xu$ is the derivative of $u$ along $X$. Let $e_{1},...,e_{k}\in\mathbb{R}^{k}$ be the standard basis and let $e^{1},...,e^{k}$ be the associated dual basis (defined by $e^{i}(e_{j}) = 1$ for $i = j$ and $0$ otherwise). Let $\mathcal{P}_{\ell}$ be defined as all $I = (i_{1},...,i_{\ell})$, $1\leq i_{1} < i_{2} < ... < i_{\ell}\leq k$. For $I\in\mathcal{P}_{\ell}$, denote $e^{I} = e^{i_{1}}\wedge...\wedge e^{i_{\ell}}\in\Lambda^{\ell}(\mathbb{R}^{k})$ and $e_{I} = e_{i_{1}}\wedge...\wedge e_{i_{\ell}}\in\Lambda_{\ell}(\mathbb{R}^{k})$ (where $\Lambda_{\ell}(\mathbb{R}^{k})$ is the  $\ell$th exterior power of $\mathbb{R}^{k}$ and $\Lambda^{\ell}(\mathbb{R}^{k})$ is the $\ell$th exterior power of $(\mathbb{R}^{k})^{'}$). Let $\langle\cdot,\cdot\rangle_{\ell,0}$ be the inner product on $\Lambda^{\ell}(\mathbb{R}^{k})$ such that $(e^{I})_{I}$ form an ON-basis. In this section, we will use the identification
\begin{align}
C^{\ell}(\alpha) = \text{Hom}(\Lambda_{\ell}(\mathbb{R}^{k}),C^{\infty}(M))\cong\Lambda^{\ell}(\mathbb{R}^{k})\otimes C^{\infty}(M) = C^{\infty}(M,\Lambda^{\ell}(\mathbb{R}^{k}))
\end{align}
which is obtained by noting that if $\omega\in\Lambda^{\ell}(\mathbb{R}^{k})$ and $u\in C^{\infty}(M)$ then we can define an element of $C^{\ell}(\alpha)$ by $X_{1}\wedge...\wedge X_{\ell}\mapsto\omega(X_{1}\wedge...\wedge X_{\ell})\cdot u$ and this map is a bijection. Let $\langle\cdot,\cdot\rangle_{\ell}$ be an inner product on $C^{\ell}(\alpha)$ defined by
\begin{align}
\langle\omega,\eta\rangle_{\ell} = \int_{M}\langle\omega(x),\eta(x)\rangle_{\ell,0}\intd\mu(x)
\end{align}
for $\mu$ some $\alpha-$invariant measure, and we have used $C^{\ell}(\alpha)\cong C^{\infty}(M,\Lambda^{\ell}(\mathbb{R}^{k}))$ (this inner product depends on the choice of invariant measure, but we will only use it to define an adjoint for the differential $\intd^{\ell}:C^{\ell}(\alpha)\to C^{\ell+1}(\alpha)$ and all $\alpha-$invariant measures defines the same adjoint, see Lemma \ref{L:FormulaForChainLaplacian}). For $\ell$ we have $\intd^{\ell}:C^{\ell}(\alpha)\to C^{\ell+1}(\alpha)$, let $\intd^{\ell,*}:C^{\ell+1}(\alpha)\to C^{\ell}(\alpha)$ be its formal adjoint. We define a laplacian on each cochain space $C^{\ell}(\alpha)$ as
\begin{align}
\Delta_{\alpha,\ell} := \intd^{\ell,*}\intd^{\ell} + \intd^{\ell-1}\intd^{\ell-1,*}.
\end{align}
It is immediate that $\Delta_{\alpha,0} = \Delta_{\alpha}$, with $\Delta_{\alpha}$ the orbitwise laplacian (Definition \ref{def:OrbitwiseLaplacian} and formula \ref{Eq:FirstCoboundaryOperator}).

We begin by deriving a formula for $\intd:C^{\ell}(\alpha)\to C^{\ell+1}(\alpha)$ under the identification $C^{\ell}(\alpha)\cong C^{\infty}(M)\otimes\Lambda^{\ell}(\mathbb{R}^{k})$.
\begin{lemma}\label{L:FormulaForDifferential}
For $I\in\mathcal{P}_{\ell}$ and $u\in C^{\infty}(M)$ we have
\begin{align*}
\intd(ue^{I}) = \sum_{j = 1}^{k}e_{j}u\cdot e^{j}\wedge e^{I}.
\end{align*}
\end{lemma}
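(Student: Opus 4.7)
The plan is to verify the formula by evaluating both sides on an arbitrary basis element of $\Lambda_{\ell+1}(\mathbb{R}^k)$ and using the Chevalley-Eilenberg formula from Definition \ref{def:Cohomology1}. The key simplification is that $\mathbb{R}^k$ is abelian, so every Lie bracket vanishes and only the first sum in the formula for $d$ survives; the representation $\pi(X)$ is simply differentiation along $X$.

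More precisely, fix $J = (j_0 < j_1 < \cdots < j_\ell) \in \mathcal{P}_{\ell+1}$ and evaluate the left-hand side on $e_J = e_{j_0} \wedge \cdots \wedge e_{j_\ell}$. The Chevalley-Eilenberg formula gives
\[
(d(ue^I))(e_{j_0}, \ldots, e_{j_\ell}) = \sum_{i=0}^{\ell} (-1)^i e_{j_i}\bigl((ue^I)(e_{j_0}, \ldots, \widehat{e}_{j_i}, \ldots, e_{j_\ell})\bigr).
\]
Since $e^I$ evaluates to $1$ on $e_{i_1} \wedge \cdots \wedge e_{i_\ell}$ and to $0$ on any other ordered basis element of $\Lambda_\ell(\mathbb{R}^k)$, the inner term is $u$ precisely when $J \setminus \{j_i\} = I$ (equivalently, $j_i \notin I$ and $\{j_0,\ldots,j_\ell\} = I \cup \{j_i\}$), and $0$ otherwise. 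So the sum reduces to at most one term: if $J = I \cup \{j\}$ for some $j \notin I$, then the unique contribution is $(-1)^p e_j u$, where $p$ is the position of $j$ in the ordered tuple $J$ (i.e.\ the number of indices of $I$ strictly less than $j$).

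For the right-hand side, I would expand $e^j \wedge e^I$ in the ordered basis $\{e^K\}_{K \in \mathcal{P}_{\ell+1}}$: this wedge vanishes whenever $j \in I$, and when $j \notin I$ sorting $e^j \wedge e^{i_1} \wedge \cdots \wedge e^{i_\ell}$ into increasing order requires moving $e^j$ past exactly $p$ entries, producing $(-1)^p e^{I \cup \{j\}}$ with the same $p$ as above. Evaluating on $e_J$ then gives a nonzero contribution exactly from the single $j$ with $I \cup \{j\} = J$, yielding $(-1)^p e_j u$, matching the left-hand side.

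The argument is essentially bookkeeping; the only subtlety is ensuring that the sign $(-1)^i$ coming from the Chevalley-Eilenberg formula (where $i$ is the index of $j$ within the \emph{ordered} tuple $J$) agrees with the sign $(-1)^p$ produced by sorting $e^j \wedge e^I$, which it does because $p = i$ by construction. Once this sign check is in place, equality of the two expressions on every basis element of $\Lambda_{\ell+1}(\mathbb{R}^k)$ proves the lemma.
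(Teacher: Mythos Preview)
Your proposal is correct and follows essentially the same approach as the paper: evaluate both sides on basis elements $e_J$, use the Chevalley--Eilenberg formula with the abelian simplification $[e_i,e_j]=0$, and verify that the sign $(-1)^p$ from the formula matches the sign produced by reordering $e^j\wedge e^I$ into $e^{I\cup\{j\}}$. The paper's proof is slightly terser on the sign check (it just remarks that ``the signs cancel out from moving $e^r$ from its position within $e^{I\cup\{r\}}$ to the far left''), but the content is identical.
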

\begin{proof}
The lemma follows from a calculation. Let $I = (i_{1},...,i_{\ell})\in\mathcal{P}_{\ell}$ and $J = (j_{0},j_{1},...,j_{\ell})\in\mathcal{P}_{\ell+1}$. It is immediate
\begin{align*}
\intd(ue^{I})(e_{J}) = & \sum_{r = 0}^{\ell}(-1)^{r}e_{j_{r}}u\cdot e^{I}(e_{J\setminus\{j_{r}\}}) = \\ = &
\sum_{r = 0}^{\ell}(-1)^{r}e_{j_{r}}u\cdot\begin{cases}
0,\quad I\not\subset J \\
0,\quad I\subset J\text{ and }j_{r}\in I \\
1,\quad I\subset J\text{ and }j_{r}\not\in I.
\end{cases}
\end{align*}
So
\begin{align*}
\intd(ue^{I}) = \sum_{r\not\in I}e_{r}u\cdot e^{r}\wedge e^{I} = \sum_{j = 1}^{k}e_{j}u\cdot e^{j}\wedge e^{I}
\end{align*}
where the signs cancel out from moving $e^{r}$ from its position within $e^{I\cup\{r\}}$ to the far left.
\end{proof}
Before proceeding, we define two operators. Let $E_{j}:\Lambda^{\ell}(\mathbb{R}^{k})\to\Lambda^{\ell+1}(\mathbb{R}^{k})$ and $\iota_{j}:\Lambda^{\ell}(\mathbb{R}^{k})\to\Lambda^{\ell-1}(\mathbb{R}^{k})$ be defined by
\begin{align}
& E_{j}(\omega) := e^{j}\wedge\omega, \\
& \iota_{j}(\omega)(X_{1},...,X_{\ell-1}) := \omega(e_{j},X_{1},...,X_{\ell-1}),\quad X_{1},...,X_{\ell-1}\in\mathbb{R}^{k}.
\end{align}
We also extend $E_{j}$ and $\iota_{j}$ to $C^{\ell}(\alpha)$ in the obvious way. If $I\in\mathcal{P}_{\ell}$ and $J\in\mathcal{P}_{\ell+1}$ then
\begin{align}\label{Eq:AdjointOfContraction}
\langle E_{j}(e^{I}),e^{J}\rangle_{\ell+1,0} = \langle e^{I},\iota_{j}(e^{J})\rangle_{\ell+1,0},
\end{align}
since both sides of the equality are non-zero precisely when $J = I\cup\{j\}$, and if $J = I\cup\{j\}$ then $e^{j}\wedge e^{I} = (-1)^{r}e^{J}$ for some $r$ determined by the position of $j$ in $J$, but we also have $\iota_{j}(e^{J})(e_{I}) = e^{J}(e_{j}\wedge e_{I}) = (-1)^{r}e^{J}(e_{J}) = (-1)^{r}$ so $\iota_{j}(e^{J}) = (-1)^{r}e^{I}$.
\begin{lemma}\label{L:FormulaForChainLaplacian}
For every $I = (i_{1},i_{2},...,i_{\ell})\in\mathcal{P}_{\ell}$ and $J = (j_{0},...,j_{\ell})\in\mathcal{P}_{\ell+1}$ we have
\begin{align}
& \intd^{\ell,*}(ue^{J}) = -\sum_{j = 1}^{k}e_{j}u\cdot\iota_{j}(e^{J}), \\
& \Delta_{\alpha,\ell}(ue^{I}) = \Delta_{\alpha}u\cdot e^{I}.
\end{align}
\end{lemma}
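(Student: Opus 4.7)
The plan for proving both formulas is to reduce everything to the explicit formula for $\intd^{\ell}$ given by Lemma \ref{L:FormulaForDifferential} together with the exterior-algebra adjoint relation $\langle E_{j}\omega,\eta\rangle_{\ell+1,0} = \langle\omega,\iota_{j}\eta\rangle_{\ell,0}$ that was established just before the statement. First I would derive the formula for $\intd^{\ell,*}$ by computing the $L^{2}$ pairing: by Lemma \ref{L:FormulaForDifferential},
\begin{align*}
\langle\intd^{\ell}(ue^{I}),ve^{J}\rangle_{\ell+1} = \sum_{j}\int_{M}(e_{j}u)\bar v\,\langle E_{j}e^{I},e^{J}\rangle_{\ell+1,0}\intd\mu.
\end{align*}
Applying the adjoint relation on the exterior algebra, then integrating by parts (which is legitimate because $\mu$ is $\alpha$-invariant and each $e_{j}$ is therefore skew-adjoint on $L^{2}(M,\mu)$), one rewrites the integrand as $u\cdot\overline{(e_{j}v)}\langle e^{I},\iota_{j}e^{J}\rangle_{\ell,0}$ up to sign and deduces the claimed formula for $\intd^{\ell,*}$. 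Since only the skew-adjointness of $e_{j}$ is used, the resulting adjoint is the same for every $\alpha$-invariant measure, justifying the parenthetical remark in the excerpt.

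For the Laplacian formula, the plan is to expand $\Delta_{\alpha,\ell}(ue^{I}) = \intd^{\ell,*}\intd^{\ell}(ue^{I}) + \intd^{\ell-1}\intd^{\ell-1,*}(ue^{I})$ using the two formulas just established. Both summands become double sums over $(j,m)$ with coefficient $e_{m}e_{j}u$: the first carries the exterior-algebra factor $\iota_{m}(e^{j}\wedge e^{I})$, the second carries $e^{m}\wedge\iota_{j}(e^{I})$. The key algebraic input is the derivation identity
\begin{align*}
\iota_{m}(e^{j}\wedge e^{I}) = \delta_{jm}\,e^{I} - e^{j}\wedge\iota_{m}(e^{I}),
\end{align*}
which splits the first double sum into a diagonal part $\sum_{j}(e_{j}^{2}u)\,e^{I}$, producing $\Delta_{\alpha}u\cdot e^{I}$, and a cross part $-\sum_{j,m}(e_{m}e_{j}u)(e^{j}\wedge\iota_{m}(e^{I}))$.

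The cancellation of cross terms is the heart of the argument. After relabelling $j\leftrightarrow m$ in the double sum coming from $\intd^{\ell-1}\intd^{\ell-1,*}$, it takes the form $\sum_{j,m}(e_{j}e_{m}u)(e^{j}\wedge\iota_{m}(e^{I}))$, and because $\alpha$ is abelian we have $e_{j}e_{m}u = e_{m}e_{j}u$, so this exactly cancels the cross part from $\intd^{\ell,*}\intd^{\ell}$. What survives is the diagonal contribution $\Delta_{\alpha}u\cdot e^{I}$, as claimed.

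The main obstacle is purely bookkeeping: getting the signs right through integration by parts, the adjoint relation between $E_{j}$ and $\iota_{j}$, and the graded Leibniz rule for $\iota_{m}$. Conceptually, the only substantive ingredient beyond routine exterior algebra is the commutativity $[e_{j},e_{m}]=0$, which drives the cancellation of cross terms; for a non-abelian Lie algebra $\mathfrak{g}$ the surviving commutator $[e_{j},e_{m}]u$ would contribute an additional zeroth-order term and the clean diagonal formula $\Delta_{\alpha,\ell}(ue^{I})=\Delta_{\alpha}u\cdot e^{I}$ would fail.
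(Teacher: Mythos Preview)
Your proposal is correct and follows essentially the same line as the paper's proof: both derive $\intd^{\ell,*}$ by pairing with $\intd^{\ell}(ue^{I})$, using the $E_{j}$--$\iota_{j}$ adjointness and skew-adjointness of $e_{j}$ with respect to an invariant measure, and then expand $\Delta_{\alpha,\ell}$ into double sums where the off-diagonal terms cancel by commutativity of the $e_{j}$.

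The only organizational difference is in the cancellation step. You invoke the graded derivation identity $\iota_{m}(e^{j}\wedge e^{I}) = \delta_{jm}\,e^{I} - e^{j}\wedge\iota_{m}(e^{I})$ (equivalently $\iota_{m}E_{j}+E_{j}\iota_{m}=\delta_{jm}$) directly, which immediately isolates the diagonal contribution and makes the cross-term cancellation a one-line index swap. The paper instead verifies the needed relations $\iota_{j}E_{j}+E_{j}\iota_{j}={\rm id}$ and, for $i\neq j$, $\iota_{i}E_{j}+E_{i}\iota_{j}+\iota_{j}E_{i}+E_{j}\iota_{i}=0$ by a case analysis on whether $i,j\in I$. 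Your route is a little cleaner; the paper's is more explicit but amounts to reproving the derivation identity on basis elements. Either way the substance is the same, and your remark that commutativity $[e_{j},e_{m}]=0$ is the only non-formal ingredient matches the paper's emphasis.
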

\begin{proof}
To simplify notation, let $\langle\cdot,\cdot\rangle_{\mu}$ denote the $L^{2}-$inner product induced by the measure $\mu$. By Lemma \ref{L:FormulaForDifferential} we obtain the formula
\begin{align*}
\left\langle\intd(ue^{I}),ve^{J}\right\rangle_{\ell+1} = & \left\langle\sum_{j = 1}^{k}e_{j}u\cdot e^{j}\wedge e^{I},ve^{J}\right\rangle_{\ell+1} = \sum_{j = 1}^{k}\langle e_{j}u,v\rangle_{\mu}\langle E_{j}(e^{I}),e^{J}\rangle_{\ell+1,0} = \\ = &
-\sum_{j = 1}^{k}\langle u,e_{j}v\rangle_{\mu}\langle e^{I},\iota_{j}(e^{J})\rangle_{\ell,0}
\end{align*}
where we used that $\langle e_{j}u,v\rangle_{\mu} = -\langle u,e_{j}v\rangle_{\mu}$ since $\mu$ is $\alpha-$invariant. That is, we have a formula for $\intd^{\ell,*}$:
\begin{align*}
\intd^{\ell,*}(ve^{J}) = -\sum_{j = 1}^{k}e_{j}v\cdot\iota_{j}(e^{J}),
\end{align*}
which proves the first part of the lemma. Let $I\in\mathcal{P}_{\ell}$ and $j\not\in I$, we define a number $\tau(j;I)\in\{0,1\}$ such that $(-1)^{\tau(j;I)}e^{j}\wedge e^{I} = e^{I\cup\{j\}}$. From the formula for $\intd^{\ell,*}$ we can calculate:
\begin{align*}
\intd^{\ell,*}\intd^{\ell}(ue^{I}) = & \intd^{\ell,*}\left(\sum_{j = 1}^{k}e_{j}u\cdot e^{j}\wedge e^{I}\right) = \sum_{j\not\in I}(-1)^{\tau(j;I)}\intd^{\ell,*}\left(e_{j}u\cdot e^{I\cup\{j\}}\right) = \\ = &
-\sum_{j\not\in I}(-1)^{\tau(j;I)}\sum_{t = 1}^{k}e_{t}e_{j}u\cdot\iota_{t}\left(e^{I\cup\{j\}}\right) = -\sum_{j,t = 1}^{k}e_{t}e_{j}u\cdot\iota_{t}\left(E_{j}(e^{I})\right)
\end{align*}
where we have used that $(-1)^{\tau(j;I)}e^{j}\wedge e^{I} = e^{I\cup\{j\}}$ so $\iota_{t}(E_{j}(e^{I})) = (-1)^{\tau(j;I)}\iota_{t}(e^{I\cup\{j\}})$. By a similar calculation we obtain
\begin{align}
&\label{Eq:HelpFormula1} \intd^{\ell,*}\intd^{\ell}(ue^{I}) = -\sum_{j,t = 1}^{k}e_{t}e_{j}u\cdot\iota_{t}\left(E_{j}(e^{I})\right), \\
&\label{Eq:HelpFormula2} \intd^{\ell-1}\intd^{\ell-1,*}(ue^{I}) = -\sum_{j,t = 1}^{k}e_{t}e_{j}u\cdot E_{t}(\iota_{j}(e^{I})).
\end{align}
Recall that our action is abelian, so we have $e_{j}e_{t}u = e_{t}e_{j}u$ for all $u\in C^{\infty}(M)$. By using formulas \ref{Eq:HelpFormula1} and \ref{Eq:HelpFormula2} we derive a formula for $\Delta_{\ell,\alpha}$:
\begin{align*}
\Delta_{\alpha,\ell}ue^{I} = & -\sum_{i,j = 1}^{k}e_{i}e_{j}u\cdot\left(\iota_{i}(E_{j}(e^{I})) + E_{i}(\iota_{j}(e^{I}))\right) = -\sum_{j = 1}^{k}e_{j}^{2}u\cdot\left(\iota_{j}(E_{j}(e^{I})) + E_{j}(\iota_{j}(e^{I})\right) - \\ &
\sum_{i < j}e_{i}e_{j}u\cdot\left(\iota_{i}(E_{j}(e^{I})) + E_{i}(\iota_{j}(e^{I})) + \iota_{j}(E_{i}(e^{I})) + E_{j}(\iota_{i}(e^{I}))\right).
\end{align*}
For $j\in I$ we have $E_{j}\iota_{j}e^{I} = e^{I}$ and $\iota_{j}E_{j}e^{I} = 0$, and if $j\not\in I$ then we have $E_{j}\iota_{j}e^{I} = 0$ and $\iota_{j}E_{j}e^{I} = e^{I}$ so we can simplify the formula for $\Delta_{\alpha,\ell}$ as
\begin{align*}
\Delta_{\alpha,\ell}ue^{I} = -\sum_{j = 1}^{k}e_{j}^{2}u\cdot e^{I} - \sum_{i < j}e_{i}e_{j}u\cdot\left(\iota_{i}(E_{j}(e^{I})) + E_{i}(\iota_{j}(e^{I})) + \iota_{j}(E_{i}(e^{I})) + E_{j}(\iota_{i}(e^{I}))\right).
\end{align*}
It remains to show $\iota_{i}(E_{j}(e^{I})) + E_{i}(\iota_{j}(e^{I})) + \iota_{j}(E_{i}(e^{I})) + E_{j}(\iota_{i}(e^{I})) = 0$ for $i < j$. If $i,j\not\in I$ or $i,j\in I$ then $\iota_{i}(E_{j}(e^{I})) + E_{i}(\iota_{j}(e^{I})) + \iota_{j}(E_{i}(e^{I})) + E_{j}(\iota_{i}(e^{I})) = 0$ since each term is $0$ separately. Assume instead that $j\in I$ and $i\not\in I$ (the last case is identical). In this case we note that $\iota_{i}(E_{j}(e^{I})) = 0$, $E_{j}(\iota_{i}(e^{I})) = 0$. Moreover, it is clear that $E_{i}\iota_{j}(e^{I})$ and $\iota_{j}E_{i}(e^{I})$ are proportional, formula \ref{Eq:AdjointOfContraction} then implies:
\begin{align*}
\langle E_{i}\iota_{j}(e^{I}),\iota_{j}E_{i}(e^{I})\rangle_{\ell,0} = \langle\iota_{i}E_{j}E_{i}\iota_{j}(e^{I}),e^{I}\rangle_{\ell,0} = -\langle\iota_{i}E_{i}E_{j}\iota_{j}(e^{I}),e^{I}\rangle_{\ell,0} = -\langle e^{I},e^{I}\rangle_{\ell,0} = -1
\end{align*}
so we obtain $E_{i}\iota_{j}(e^{I}) = -\iota_{j}E_{i}(e^{I})$. Equivalently, we have $E_{i}\iota_{j}(e^{I}) + \iota_{j}E_{i}(e^{I}) = 0$ finishing the proof.
\end{proof}
As a consequence of Lemma \ref{L:FormulaForChainLaplacian} we obtain a decomposition of the cochain spaces $C^{\ell}(\alpha)$ and can calculate the cohomology groups $H^{\ell}(\alpha)$ when the action $\alpha$ is ${\rm GH}$.
\begin{theorem}\label{Thm:DecompositionOfCochainSpaces}
Let $\alpha:\mathbb{R}^{k}\times M\to M$ be ${\rm GH}$. There is a (orthogonal with respect to $\langle\cdot,\cdot\rangle_{\ell}$) decomposition
\begin{align*}
C^{\ell}(\alpha) = {\rm Im}(\intd^{\ell-1})\oplus{\rm Im}(\intd^{\ell,*})\oplus\Lambda^{\ell}(\mathbb{R}^{k}),
\end{align*}
where we identify $\Lambda^{\ell}(\mathbb{R}^{k})\subset C^{\ell}(\alpha)$ by $\omega\mapsto\omega\otimes 1\in \Lambda^{\ell}(\mathbb{R}^{k})\otimes C^{\infty}(M)$ (or equivalently, an element $\omega\in\Lambda^{\ell}(\mathbb{R}^{k})$ is identified with the map taking $X_{1}\wedge...\wedge X_{\ell}\in\Lambda_{\ell}(\mathbb{R}^{k})$ to the constant function $\omega(X_{1}\wedge...\wedge X_{\ell})\in C^{\infty}(M)$). In particular, we obtain an isomorphism $H^{\ell}(\alpha)\cong\Lambda^{\ell}(\mathbb{R}^{k})$.
\end{theorem}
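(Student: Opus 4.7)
The plan is to establish a Hodge-type decomposition of the complex $(C^{\bullet}(\alpha), \intd)$ via the cochain Laplacian $\Delta_{\alpha,\ell}$ and then read off the harmonic cochains as the constant forms $\Lambda^{\ell}(\mathbb{R}^{k})$. The essential input is Lemma \ref{L:FormulaForChainLaplacian}, which shows that $\Delta_{\alpha,\ell}$ acts diagonally in the basis $\{e^{I}\}_{I\in\mathcal{P}_{\ell}}$: for $\omega=\sum_{I}u_{I}e^{I}$ we have $\Delta_{\alpha,\ell}\omega=\sum_{I}(\Delta_{\alpha}u_{I})e^{I}$. This reduces every statement below to a coefficient-wise application of known facts about the scalar operator $\Delta_{\alpha}$.

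First I would identify the kernel and image of $\Delta_{\alpha,\ell}$. The kernel is $\Lambda^{\ell}(\mathbb{R}^{k})$: if $\Delta_{\alpha}u_{I}=0$ for each $I$, then by Lemma \ref{L:InvarianceImpliesKernelOfLaplacian} each $u_{I}$ is $\alpha$-invariant, hence constant by ergodicity (Theorem \ref{Thm:PropertiesOrbitLaplacian1}). The image equals $C_{0}^{\infty}(M)\otimes\Lambda^{\ell}(\mathbb{R}^{k})$ by Theorem \ref{Thm:PropertiesOrbitLaplacian2} applied in each component. Consequently, given $\omega=\sum_{I}u_{I}e^{I}$, setting $h:=\sum_{I}(\int_{M}u_{I}\,\intd\mu)\,e^{I}\in\Lambda^{\ell}(\mathbb{R}^{k})$ makes $\omega-h$ lie in $\text{Im}(\Delta_{\alpha,\ell})$, so there is a smooth $\eta\in C^{\ell}(\alpha)$ with $\Delta_{\alpha,\ell}\eta=\omega-h$. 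Expanding $\Delta_{\alpha,\ell}\eta=\intd^{\ell-1}(\intd^{\ell-1,*}\eta)+\intd^{\ell,*}(\intd^{\ell}\eta)$ gives
$$\omega = h + \intd^{\ell-1}(\intd^{\ell-1,*}\eta) + \intd^{\ell,*}(\intd^{\ell}\eta),$$
which yields the sum decomposition $C^{\ell}(\alpha)=\text{Im}(\intd^{\ell-1})+\text{Im}(\intd^{\ell,*})+\Lambda^{\ell}(\mathbb{R}^{k})$.

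To upgrade this to an orthogonal direct sum, I would argue formally. The identity $\intd^{\ell}\circ\intd^{\ell-1}=0$ gives $\text{Im}(\intd^{\ell-1})\perp\text{Im}(\intd^{\ell,*})$ via adjunction. For orthogonality to $\Lambda^{\ell}(\mathbb{R}^{k})$, note that constant cochains are annihilated by every $e_{j}$, so the explicit formulas from Lemmas \ref{L:FormulaForDifferential} and \ref{L:FormulaForChainLaplacian} place them in $\ker\intd^{\ell}\cap\ker\intd^{\ell-1,*}$; adjunction then shows $\Lambda^{\ell}(\mathbb{R}^{k})$ is orthogonal to the two image summands. For cohomology: if $\omega=\intd^{\ell,*}\zeta$ satisfies $\intd^{\ell}\omega=0$, then $\|\omega\|_{\ell}^{2}=\langle\intd^{\ell}\omega,\zeta\rangle_{\ell+1}=0$, so $\ker\intd^{\ell}\cap\text{Im}(\intd^{\ell,*})=0$, whence $\ker\intd^{\ell}=\Lambda^{\ell}(\mathbb{R}^{k})\oplus\text{Im}(\intd^{\ell-1})$ and $H^{\ell}(\alpha)\cong\Lambda^{\ell}(\mathbb{R}^{k})$. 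I do not anticipate a serious obstacle, since each step reduces component-wise to the scalar theory of $\Delta_{\alpha}$; the one point requiring genuine input is producing the smooth primitive $\eta$, and this is precisely the content of Theorem \ref{Thm:PropertiesOrbitLaplacian2} applied to each coefficient $u_{I}-\int_{M}u_{I}\,\intd\mu$.
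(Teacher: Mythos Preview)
Your proof is correct and follows essentially the same route as the paper's: both use Lemma~\ref{L:FormulaForChainLaplacian} to reduce $\Delta_{\alpha,\ell}$ to the scalar $\Delta_{\alpha}$ componentwise, invoke Theorem~\ref{Thm:PropertiesOrbitLaplacian2} to get $C^{\ell}(\alpha)=\text{Im}(\Delta_{\alpha,\ell})\oplus\Lambda^{\ell}(\mathbb{R}^{k})$, and then use the formal adjoint relations (including the $\|\intd^{\ell,*}\zeta\|_{\ell}^{2}=\langle\intd^{\ell}\intd^{\ell,*}\zeta,\zeta\rangle$ trick) to obtain orthogonality and the identification of $\ker\intd^{\ell}$. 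Your version is slightly more explicit in verifying orthogonality of $\Lambda^{\ell}(\mathbb{R}^{k})$ to the two image summands, which the paper leaves implicit.
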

\begin{proof}
By definition we have $\text{Im}(\Delta_{\alpha,\ell})\subset\text{Im}(\intd^{\ell-1})\oplus\text{Im}(\intd^{\ell,*})$ (note that the sum $\text{Im}(\intd^{\ell-1})\oplus\text{Im}(\intd^{\ell,*})$ is direct since $\langle\intd^{\ell,*}\omega,\intd^{\ell-1}\eta\rangle_{\ell} = \langle\omega,\intd^{\ell}\intd^{\ell-1}\eta\rangle_{\ell+1} = 0$). Using the formula for $\Delta_{\alpha,\ell}$ in Lemma \ref{L:FormulaForChainLaplacian} and Theorem \ref{Thm:PropertiesOrbitLaplacian2} it is clear that ${\rm Im}(\Delta_{\alpha,\ell})\oplus\Lambda^{\ell}(\mathbb{R}^{k}) = C^{\ell}(\alpha)$. It follows that 
\begin{align*}
C^{\ell}(\alpha) = {\rm Im}(\Delta_{\alpha,\ell})\oplus\Lambda^{\ell}(\mathbb{R}^{k})\subset\text{Im}(\intd^{\ell-1})\oplus\text{Im}(\intd^{\ell,*})\oplus\Lambda^{\ell}(\mathbb{R}^{k})\subset C^{\ell}(\alpha),
\end{align*}
which shows $\text{Im}(\intd^{\ell-1})\oplus\text{Im}(\intd^{\ell,*})\oplus\Lambda^{\ell}(\mathbb{R}^{k}) = C^{\ell}(\alpha)$.

The calculation of $H^{\ell}(\alpha)$ follows since $\ker\intd^{\ell} = \text{Im}(\intd^{\ell-1})\oplus\Lambda^{\ell}(\mathbb{R}^{k})$. Indeed, $\ker\intd^{\ell}\supset\text{Im}(\intd^{\ell-1})\oplus\Lambda^{\ell}(\mathbb{R}^{k})$ is clear. To see the converse, note that if $\omega = \intd^{\ell,*}\eta\in\ker\intd^{\ell}$ then
\begin{align*}
\norm{\omega}_{\ell}^{2} = \norm{\intd^{\ell,*}\eta}_{\ell}^{2} = \langle\intd^{\ell,*}\eta,\intd^{\ell,*}\eta\rangle_{\ell} = \langle\eta,\intd^{\ell}\intd^{\ell,*}\eta\rangle_{\ell} = \langle\eta,\intd^{\ell}\omega\rangle_{\ell} = 0
\end{align*}
so $\omega = 0$, and ${\rm Im}(\intd^{\ell,*})\cap\ker\intd^{\ell} = 0$.
\end{proof}

\subsection{Betti number and invariant 1-forms}

In \cite{RordriguezHertzRodriguezHertz2006} F. Rodriguez Hertz and J. Rodriguez Hertz show that any cohomology free flow on a closed smooth manifold $M$ fiber over a linear flow on $\mathbb{T}^{b_{1}(M)}$, where $b_{1}(M)$ is the first Betti number of $M$. We use a variant of their proof to extend this result to more general actions of connected Lie groups. In particular, we show that GH $\mathbb{R}^{k}-$actions fibers over translation actions on tori.

The proof is based on the following observation: if $Y$ is a vector field on $M$ then for any closed $1-$form $\omega$ we have
\begin{align*}
\mathcal{L}_{Y}\omega = \intd u
\end{align*}
for some smooth function $u$. This is immediate from the formula of the Lie derivative on $1-$forms $\mathcal{L}_{Y} = \iota_{Y}\intd + \intd\iota_{Y}$, where $\iota_{Y}\theta = \theta(Y)$, (this is Cartan's formula, see for example \cite[Theorem 3.5.12.]{Mukherjee2015}) since $\iota_{Y}\omega$ is a smooth function if $\omega$ is a $1-$form. We will need the following standard lemma.
\begin{lemma}\label{L:IntegralCohomologyClassGivesCircleMap}
If $\omega$ is a closed $1-$form that projects to an integral cohomology class, then $\omega$ is given by $\intd s$ for some $s:M\to\mathbb{T}$.
\end{lemma}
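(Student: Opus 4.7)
The plan is to pass to the universal cover, integrate there using Poincaré's lemma, and then check that the resulting primitive is equivariant modulo integers under deck transformations so it descends to a $\mathbb{T}$-valued function on $M$.

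Concretely, let $\pi\colon\widetilde{M}\to M$ be the universal cover and set $\widetilde{\omega}=\pi^{*}\omega$. Since $\omega$ is closed, so is $\widetilde{\omega}$, and since $\widetilde{M}$ is simply connected, Poincaré's lemma provides a smooth function $\widetilde{s}\colon\widetilde{M}\to\mathbb{R}$ with $\mathrm{d}\widetilde{s}=\widetilde{\omega}$. Explicitly, fixing a basepoint $\widetilde{x}_{0}\in\widetilde{M}$, one can define
\begin{align*}
\widetilde{s}(\widetilde{x}) = \int_{\widetilde{\gamma}}\widetilde{\omega},
\end{align*}
where $\widetilde{\gamma}$ is any smooth path in $\widetilde{M}$ from $\widetilde{x}_{0}$ to $\widetilde{x}$; this is path-independent because $\widetilde{M}$ is simply connected and $\widetilde{\omega}$ is closed.

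Next I would analyze the deck action. For each $\gamma\in\pi_{1}(M,\pi(\widetilde{x}_{0}))$ acting on $\widetilde{M}$ as a deck transformation, the form $\widetilde{\omega}$ is $\gamma$-invariant because it is a pullback from $M$. Consequently $\mathrm{d}(\gamma^{*}\widetilde{s}-\widetilde{s})=0$, so $\gamma^{*}\widetilde{s}-\widetilde{s}$ is a constant $c(\gamma)\in\mathbb{R}$. Computing this constant by letting $\widetilde{\gamma}$ be a path from $\widetilde{x}_{0}$ to $\gamma\cdot\widetilde{x}_{0}$ and pushing it down to a loop in $M$ representing $\gamma$ gives
\begin{align*}
c(\gamma) = \int_{\pi\circ\widetilde{\gamma}}\omega,
\end{align*}
which is an integer by the integrality hypothesis on $[\omega]$.

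Finally, since the values of $\widetilde{s}$ on different points of the fiber $\pi^{-1}(x)$ differ by integers, the composition $\widetilde{s}\bmod \mathbb{Z}$ descends to a well-defined smooth map $s\colon M\to\mathbb{R}/\mathbb{Z}=\mathbb{T}$. By construction $\pi^{*}(\mathrm{d}s)=\mathrm{d}\widetilde{s}=\widetilde{\omega}=\pi^{*}\omega$, and because $\pi$ is a local diffeomorphism this yields $\mathrm{d}s=\omega$, as required. There is no real obstacle; the only point to keep in mind is that integrality of the periods is exactly what guarantees $c(\gamma)\in\mathbb{Z}$ and thus the descent to $\mathbb{T}$.
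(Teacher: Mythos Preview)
Your proof is correct and is essentially the same as the paper's: both integrate $\omega$ along paths and use integrality of periods to make the primitive well-defined modulo $\mathbb{Z}$. The only cosmetic difference is that the paper defines $s(x)=\int_{x_{0}}^{x}\omega+\mathbb{Z}$ directly on $M$ without explicitly passing to the universal cover, but the content is identical.
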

\begin{proof}
Let $x_{0}\in M$ be some basepoint, then for $x\in M$ we define
\begin{align*}
s(x) := \int_{x_{0}}^{x}\omega + \mathbb{Z}.
\end{align*}
This map is well-defined since if $\gamma:\mathbb{T}\to M$ is a closed smooth curve then
\begin{align*}
\int_{\gamma}\omega = [\omega]([\gamma])\in\mathbb{Z}
\end{align*}
where $[\omega]([\gamma])$ is the pairing between the cohomology class $[\omega]$ and the homology class of $[\gamma]$. Since $[\omega]$ is integral this is an integer. It is clear that $s$ is smooth and $\intd s = \omega$.
\end{proof}
For a Lie algebra $\mathfrak{g}$ we denote by $H^{1}(\mathfrak{g})$ cohomology obtained from the trivial representation of $\mathfrak{g}$ on $\mathbb{C}$. If $\alpha:G\to{\rm Diff}^{\infty}(M)$ is a smooth $G-$action then we obtain a map $\Lambda^{\ell}(\mathfrak{g})\to C^{\ell}(\alpha)$ by mapping $\omega\in\Lambda^{\ell}(\mathfrak{g})$ to $\omega\otimes 1$ (where we consider $1$ as the constant function $1$ in $C^{\infty}(M)$) which induces an injective map $H^{1}(\mathfrak{g})\to H^{1}(\alpha)$. We can now state and prove the main theorem of this section.
\begin{theorem}\label{Thm:ComplicatedTopologyGiveSubmersion}
Let $G$ be any connected Lie group and let $\alpha:G\times M\to M$ be any minimal action with an invariant measure. If $H^{1}(\alpha) = H^{1}(\mathfrak{g})$, then there is a submersion $\pi:M\to\mathbb{T}^{b_{1}(M)}$ projecting $\alpha$ to an action by translations.
\end{theorem}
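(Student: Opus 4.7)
The strategy is to exhibit $b_1(M)$ pointwise linearly independent, $G$-invariant, closed $1$-forms on $M$ with integer periods; integrating them via Lemma \ref{L:IntegralCohomologyClassGivesCircleMap} and assembling them will yield the desired submersion.

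First, choose closed $1$-forms $\omega_1,\ldots,\omega_{b_1(M)}$ on $M$ whose cohomology classes form a basis of $H^1(M;\mathbb{R})$ with integer periods (possible because $H^1(M;\mathbb{Z})/\mathrm{torsion}$ gives an integral structure on $H^1(M;\mathbb{R})$). Each $\omega_i$ defines a $1$-cochain $\tilde\omega_i\in C^1(\alpha)$ by $\tilde\omega_i(X):=\omega_i(X)\in C^\infty(M)$, and the identity $(\intd\tilde\omega_i)(X,Y)=(\intd\omega_i)(X,Y)=0$ shows it is a cocycle. Under the hypothesis $H^1(\alpha)=H^1(\mathfrak{g})$, we may write $\tilde\omega_i=\eta_i+\intd u_i$ for some $\eta_i\in\mathfrak{g}^*$ (interpreted as the constant cocycle $X\mapsto\eta_i(X)\cdot 1$, with $\eta_i$ automatically vanishing on $[\mathfrak{g},\mathfrak{g}]$) and some $u_i\in C^\infty(M)$. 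Setting $\omega_i':=\omega_i-\intd u_i$ gives closed $1$-forms in the same de Rham class as $\omega_i$ (still with integer periods), with $\omega_i'(X)=\eta_i(X)$ a constant function on $M$ for every $X\in\mathfrak{g}$. By Cartan's formula, $\mathcal{L}_X\omega_i'=\iota_X\intd\omega_i'+\intd(\omega_i'(X))=0$, so each $\omega_i'$ is invariant under every one-parameter subgroup, hence under all of $G$ by connectedness.

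The hard part is showing that $\omega_1',\ldots,\omega_{b_1(M)}'$ are pointwise linearly independent at every point. Suppose on the contrary that $\sum_i c_i\,\omega_i'(x_0)=0$ at some $x_0\in M$ for constants $c_i$ not all zero, and set $\omega:=\sum_i c_i\omega_i'$. The form $\omega$ is $G$-invariant; since $\alpha(g)^*\omega=\omega$ and $\omega_{x_0}=0$, pulling back by the isomorphism $\intd\alpha(g)_{x_0}$ forces $\omega_{\alpha(g)x_0}=0$ for every $g\in G$. The orbit of $x_0$ is dense by minimality, so by continuity $\omega\equiv 0$ on $M$, contradicting the fact that $[\omega_1'],\ldots,[\omega_{b_1(M)}']$ form a basis of $H^1(M;\mathbb{R})$. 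This is the step where minimality is used in an essential way.

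Finally, Lemma \ref{L:IntegralCohomologyClassGivesCircleMap} produces smooth maps $s_i:M\to\mathbb{T}$ with $\intd s_i=\omega_i'$, and we set $\pi:=(s_1,\ldots,s_{b_1(M)}):M\to\mathbb{T}^{b_1(M)}$. Pointwise linear independence of the $\omega_i'$ is precisely the statement that $\intd\pi$ has rank $b_1(M)$ everywhere, so $\pi$ is a submersion. For equivariance, $\intd(s_i\circ\alpha(g)-s_i)=\alpha(g)^*\omega_i'-\omega_i'=0$ by $G$-invariance of $\omega_i'$, so $s_i\circ\alpha(g)-s_i$ is locally constant; since $M$ is connected (it contains a dense orbit of the connected group $G$), it equals a constant $\phi_i(g)\in\mathbb{T}$, and the identity $s_i\circ\alpha(g_1g_2)-s_i=\phi_i(g_1)+\phi_i(g_2)$ shows $\phi_i:G\to\mathbb{T}$ is a continuous homomorphism. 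Setting $\phi:=(\phi_1,\ldots,\phi_{b_1(M)})$, we have $\pi\circ\alpha(g)=\pi+\phi(g)$, which semiconjugates $\alpha$ to the translation action by $\phi(G)$ on $\mathbb{T}^{b_1(M)}$.
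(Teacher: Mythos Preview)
Your proof is correct and follows essentially the same route as the paper's: produce $\alpha$-invariant closed $1$-forms representing an integral basis of $H^1(M;\mathbb{R})$, use minimality to get pointwise linear independence, and integrate via Lemma~\ref{L:IntegralCohomologyClassGivesCircleMap}. The only notable difference is that the paper passes through Cartan's formula $\mathcal{L}_X\omega = \intd\eta(X)$ and then uses the invariant measure to force the constant part $c(X)$ to vanish, whereas you directly identify the cocycle as $X\mapsto\omega_i(X)$ and allow the constant $\eta_i(X)$ to remain nonzero; since only constancy (not vanishing) of $\omega_i'(X)$ is needed for $\mathcal{L}_X\omega_i'=0$, your version is slightly more economical and in fact makes no explicit use of the invariant-measure hypothesis.
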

\begin{proof}
Let $b = b_{1}(M)$, and let $\omega\in\Omega^{1}(M)$ be a closed integral cohomology class. For any $X\in\mathfrak{g}$ we have
\begin{align*}
\mathcal{L}_{X}\omega = \intd\left[\omega(X)\right]
\end{align*}
by Cartan's formula. We define $\eta:\mathfrak{g}\to C^{\infty}(M)$ by $\eta(X)(x) = \omega_{x}(X) + c(X)$ where $c(X)\in\mathbb{R}$ is chosen such that $\eta(X)$ has zero average with respect to the invariant measure. For $X,Y\in\mathfrak{g}$ we have
\begin{align*}
0 = \mathcal{L}_{X}\mathcal{L}_{Y}\omega - \mathcal{L}_{Y}\mathcal{L}_{X}\omega - \mathcal{L}_{[X,Y]}\omega = \intd(X\eta(Y) - Y\eta(X) - \eta([X,Y])),
\end{align*}
so $X\eta(Y) - Y\eta(X) - \eta([X,Y]) = 0$ (since the left-hand side has integral $0$ with respect to the $\alpha-$invariant measure, and is constant). It follows that $\eta$ is a $1-$cocycle. Since $H^{1}(\alpha) = H^{1}(\mathfrak{g})$ there is a map $c:\mathfrak{g}\to\mathbb{R}$ and a function $u\in C^{\infty}(M)$ such that
\begin{align*}
\eta(X) = Xu + c(X),\quad X\in\mathfrak{g}.
\end{align*}
The functions $\eta(X)$ were chosen with zero average, so after integrating both sides of $\eta(X) = Xu + c(X)$ with respect to the $\alpha-$invariant measure, we obtain $c(X) = 0$. That is
\begin{align*}
\mathcal{L}_{X}(\omega - \intd u) = 0.
\end{align*}
So, after changing $\omega$ by a coboundary $\intd u$, we will assume without loss of generality that $\omega$ is $\alpha-$invariant. We claim that if $\omega$ represents a non-trivial element on cohomology, then $\omega$ is nowhere vanishing. Indeed, if $\omega_{x} = 0$ then
\begin{align*}
0 = \omega_{x} = (\alpha(g)^{*}\omega)_{x} = \omega_{\alpha(g)x}\circ D_{x}\alpha(g)
\end{align*}
or $\omega_{\alpha(g)x} = 0$. Since $\alpha$ is minimal this implies that $\omega$ vanishes identically, which contradicts that $\omega$ represents a non-trivial element on cohomology. 

Let $\omega_{1},...,\omega_{b}\in\Omega^{1}(M)$ be closed $1-$forms that represent integral cohomology classes, and projecting onto a basis of the $\mathbb{R}-$cohomology. Assume that each $\omega_{j}$ is $\alpha-$invariant. We claim that $(\omega_{1})_{x},...,(\omega_{b})_{x}$ are linearly independent for all $x\in M$. Indeed, if $c_{1},...,c_{b}$ are such that
\begin{align*}
c_{1}(\omega_{1})_{x} + ... + c_{b}(\omega_{b})_{x} = 0
\end{align*}
for some $x\in M$ then $c_{1}\omega_{1} + ... + c_{b}\omega_{b} = 0$ by the argument above. Since $\omega_{1},...,\omega_{b}$ forms a basis on cohomology, this implies that $c_{1} = ... = c_{b} = 0$ so $(\omega_{1})_{x},...,(\omega_{b})_{x}$ are indeed independent.

Let $s_{i}:M\to\mathbb{T}$, $i = 1,...,b$, be maps such that $\intd s_{i} = \omega_{i}$ (Lemma \ref{L:IntegralCohomologyClassGivesCircleMap}). Define
\begin{align*}
s:M\to\mathbb{T}^{b},\quad s = (s_{1},...,s_{b}).
\end{align*}
Since $\intd s_{i} = \omega_{i}$ and $(\omega_{1})_{x},...,(\omega_{b})_{x}$ are linearly independent at all $x\in M$, $s$ is a submersion. For any $X\in\mathfrak{g}$ we have $\intd(\intd s_{i}(X)) = \intd(\omega_{i}(X)) = \mathcal{L}_{X}\omega_{i} = 0$ or $\intd s_{i}(X) = c_{i}(X)$ is constant. That is, for $X\in\mathfrak{g}$ we have:
\begin{align*}
Ds(X) = (\intd s_{1}(X),...,\intd s_{b}(X)) = (c_{1}(X),...,c_{b}(X))
\end{align*}
proving that $s:M\to\mathbb{T}^{b}$ project $\alpha$ onto a translation action.
\end{proof}

\subsection{Tame estimates}

In local rigidity problems it is often useful to obtain tame estimates on the operators $\intd^{\ell}$. We prove that if $\Delta_{\alpha}$ is GH such that the inverse of $\Delta_{\alpha}$ (on $0-$average functions) satisfy tame estimates, then $\intd^{\ell}$ have an inverse on its image that satisfy tame estimates for all $\ell$. This will be used in forthcoming work to prove local rigidity of some parabolic actions on nilmanifolds \cite{Sandfeldt2024}.
\begin{theorem}
Let $\alpha:\mathbb{R}^{k}\times M\to M$ be a ${\rm GH}$ action. If the orbitwise laplacian has a tame inverse then every coboundary map $\intd^{\ell}:C^{\ell}(\alpha)\to C^{\ell + 1}(\alpha)$ has a tame inverse on its image. More precisely, there are tame maps $\delta^{\ell}:C^{\ell + 1}(\alpha)\to C^{\ell}(\alpha)$ such that $\intd^{\ell}\delta^{\ell}(\omega) = \omega$ for $\omega\in{\rm Im}(\intd^{\ell})$.
\end{theorem}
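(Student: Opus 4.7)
The plan is to set
\[
\delta^{\ell}(\omega) := \intd^{\ell,*}\Delta_{\alpha,\ell+1}^{-1}\bigl(\omega - P_{\ell+1}\omega\bigr),
\]
where $P_{\ell+1}:C^{\ell+1}(\alpha)\to\Lambda^{\ell+1}(\mathbb{R}^{k})$ is the orthogonal projection in the decomposition of Theorem \ref{Thm:DecompositionOfCochainSpaces} (equivalently, the componentwise average against $\mu$), and $\Delta_{\alpha,\ell+1}^{-1}$ denotes the inverse of the chain laplacian on the complement of its kernel. I would then verify the two claims separately: the algebraic identity $\intd^{\ell}\delta^{\ell}(\omega)=\omega$ for $\omega\in{\rm Im}(\intd^{\ell})$, and the tame estimate for $\delta^{\ell}$.

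The inversion identity goes as follows. By Lemma \ref{L:FormulaForChainLaplacian}, $\Delta_{\alpha,\ell+1}(ue^{I})=\Delta_{\alpha}u\cdot e^{I}$, so in the orthonormal basis $\{e^{I}\}$ the operator $\Delta_{\alpha,\ell+1}$ is just $\Delta_{\alpha}$ acting coordinatewise. Hence the assumed tame inverse of $\Delta_{\alpha}$ on $C_{0}^{\infty}(M)$ lifts componentwise to a tame inverse of $\Delta_{\alpha,\ell+1}$ on the complement of $\Lambda^{\ell+1}(\mathbb{R}^{k})$, which by Theorem \ref{Thm:DecompositionOfCochainSpaces} (at level $\ell+1$) equals ${\rm Im}(\intd^{\ell})\oplus{\rm Im}(\intd^{\ell+1,*})$. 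For $\omega\in{\rm Im}(\intd^{\ell})$ we have $P_{\ell+1}\omega=0$, and using the definition of $\Delta_{\alpha,\ell+1}$,
\[
\intd^{\ell}\delta^{\ell}(\omega)=\intd^{\ell}\intd^{\ell,*}\Delta_{\alpha,\ell+1}^{-1}\omega=\bigl(\Delta_{\alpha,\ell+1}-\intd^{\ell+1,*}\intd^{\ell+1}\bigr)\Delta_{\alpha,\ell+1}^{-1}\omega=\omega-\intd^{\ell+1,*}\intd^{\ell+1}\Delta_{\alpha,\ell+1}^{-1}\omega.
\]
To kill the remaining term, I would use the commutation $\Delta_{\alpha,\ell+1}\intd^{\ell}=\intd^{\ell}\Delta_{\alpha,\ell}$, which follows from $\intd\circ\intd=0$ since both sides reduce to $\intd^{\ell}\intd^{\ell,*}\intd^{\ell}$. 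Together with formal self-adjointness of $\Delta_{\alpha,\ell+1}$ and the orthogonality of the decomposition, this forces $\Delta_{\alpha,\ell+1}^{-1}$ to preserve ${\rm Im}(\intd^{\ell})\subset\ker\intd^{\ell+1}$, so the extra term vanishes and $\intd^{\ell}\delta^{\ell}(\omega)=\omega$.

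For tameness, Lemma \ref{L:FormulaForChainLaplacian} also exhibits $\intd^{\ell,*}$ as a first-order differential operator in the generators $e_{j}$, so it satisfies $\norm{\intd^{\ell,*}\eta}_{r}\leq C_{r}\norm{\eta}_{r+1}$. The projection $P_{\ell+1}$ and its complement $I-P_{\ell+1}$ are bounded on every Sobolev norm (subtracting finitely many averages), and $\Delta_{\alpha,\ell+1}^{-1}$ is tame by the componentwise reduction above. Since $\delta^{\ell}$ is a composition of tame maps, it is tame. The only conceptual obstacle is checking that $\Delta_{\alpha,\ell+1}^{-1}$ really respects the Hodge-type splitting so that the error term in the identity above vanishes; once the splitting and the commutation $\Delta_{\alpha,\ell+1}\intd^{\ell}=\intd^{\ell}\Delta_{\alpha,\ell}$ are in hand, the rest is routine.
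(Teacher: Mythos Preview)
Your proof is correct and is essentially the same Hodge-theoretic argument as the paper's: both rely on Lemma \ref{L:FormulaForChainLaplacian} to make $\Delta_{\alpha,\ell}$ componentwise, on Theorem \ref{Thm:DecompositionOfCochainSpaces} for the splitting, and on the fact that $\intd^{\ell,*}$ is a first-order operator. The only difference is presentational: the paper works at level $\ell$, invokes the open mapping theorem to define $\delta^{\ell}$ as the inverse of $\intd^{\ell}\vert_{\mathrm{Im}(\intd^{\ell,*})}$, and then derives the tame estimate $\norm{\omega}_{r}\leq C_{r}\norm{\intd^{\ell}\omega}_{r+r_{0}}$ directly from $\Delta_{\alpha,\ell}\omega=\intd^{\ell,*}\intd^{\ell}\omega$ for $\omega\in\mathrm{Im}(\intd^{\ell,*})$; you instead work at level $\ell+1$ and write down the explicit Green-type formula $\delta^{\ell}=\intd^{\ell,*}\Delta_{\alpha,\ell+1}^{-1}(I-P_{\ell+1})$, which yields the same map on $\mathrm{Im}(\intd^{\ell})$ but has the mild advantage of being defined on all of $C^{\ell+1}(\alpha)$ from the outset.
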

\begin{proof}
Recall that we have a decomposition $C^{\ell}(\alpha) = \text{Im}(\intd)\oplus\text{Im}(\intd^{*})\oplus\Lambda^{\ell}(\mathbb{R}^{k})$ with $\ker\intd^{\ell} = \text{Im}(\intd^{\ell-1})\oplus\Lambda^{\ell}(\mathbb{R}^{k})$ (Theorem \ref{Thm:DecompositionOfCochainSpaces}). So, the map $\intd^{\ell}:\text{Im}(\intd^{\ell,*})\to\text{Im}(\intd^{\ell})$ is bijective. Since each $H^{*}(\alpha)$ is finite dimensional it is immediate that $\text{Im}(\intd^{\ell})$ and $\text{Im}(\intd^{\ell,*})$ are both closed (note that we can write ${\rm Im}(\intd^{\ell})\oplus\Lambda^{\ell+1}(\mathbb{R}^{k}) = \ker(\intd^{\ell+1})$, where $\ker(\intd^{\ell+1})$ is closed and $\Lambda^{\ell+1}(\mathbb{R}^{k})$ is finite dimensional, similarly one sees that ${\rm Im}(\intd^{\ell,*})$ is closed). By the Open mapping theorem, we have a continuous inverse
\begin{align*}
\delta^{\ell}:\text{Im}(\intd^{\ell}:C^{\ell}(\alpha)\to C^{\ell+1}(\alpha))\to\text{Im}(\intd^{\ell,*}:C^{\ell+1}(\alpha)\to C^{\ell}(\alpha)).
\end{align*}
We want to use the fact that $\Delta_{\alpha}$ has a tame inverse to show that $\delta^{\ell}$ is a tame map. Since $\Delta_{\alpha,\ell}$ is $\Delta_{\alpha}$ coordinatewise (Lemma \ref{L:FormulaForChainLaplacian}) it follows that $\Delta_{\alpha,\ell}$ has a tame inverse on $\text{Im}(\intd^{\ell-1})\oplus\text{Im}(\intd^{\ell,*})$.

Note that the theorem is equivalent to showing that we have $C_{r}$ and $r_{0}$ such that for $\omega\in\text{Im}(\intd^{\ell-1,*})$ we have
\begin{align*}
\norm{\omega}_{r}\leq C_{r}\norm{\intd\omega}_{r + r_{0}}.
\end{align*}
This is implied by the following calculation
\begin{align*}
\norm{\omega}_{r} = & \norm{\Delta_{\alpha,\ell}^{-1}\Delta_{\alpha,\ell}\omega}_{r}\leq C_{r}'\norm{\Delta_{\alpha,\ell}\omega}_{r + r_{0}'} = C_{r}'\norm{(\intd^{\ell-1}\intd^{\ell-1,*} + \intd^{\ell,*}\intd^{\ell})\omega}_{r + r_{0}'} = \\ = &
C_{r}'\norm{\intd^{\ell,*}\intd^{\ell}\omega}_{r + r_{0}'}\leq C_{r}\norm{\intd^{\ell}\omega}_{r + r_{0}}
\end{align*}
where we used that $\omega\in\text{Im}(\intd^{\ell-1,*})$, so $\intd^{\ell,*}\omega = 0$, and that $\intd^{\ell,*}$ satisfies tame estimates since it is a degree $1$ differential operator.
\end{proof}

\section{Rigidity of globally hypoelliptic actions}

In this section, we prove Theorems \ref{MainThm:ThmA} and \ref{MainThm:ThmB}. We begin by proving $(i)$ of Theorem \ref{MainThm:ThmB} (and, as a special case also Theorem \ref{MainThm:ThmA}) in Section \ref{SubSec:GWConjectureOnNilmanifolds}. In Section \ref{SubSec:RigidityLargeFirstBettiNumber} we prove $(ii)$ of Theorem \ref{MainThm:ThmB}, this closely follows the proof of the Greenfield-Wallach conjecture on $3-$manifolds (when the first Betti number is $2$, see \cite[Section 5.1]{ForniGLWdim3}). Finally, in Section \ref{SubSec:RigiditySmallCodimension} we prove $(iii)$ of Theorem \ref{MainThm:ThmB}.

It will be convenient to first classify GH translation actions on tori, this is standard (see for example \cite[page 19]{KatokCombinatorial} or \cite[Section 3]{GreenfieldWallach2}) but we include a proof for completeness. Let $T:\mathbb{R}^{k}\times\mathbb{T}^{d}\to\mathbb{T}^{d}$ be an action by translations, and $\rho:\mathbb{R}^{k}\to\mathbb{R}^{d}$ the corresponding homomorphism such that $T(\mathbf{t})x = x + \rho(\mathbf{t})$. We write $X_{j} = \rho(e_{j})$, $j = 1,...,k$ (where $e_{j}$ is the $j$th standard basis vector). We say that $T$ (or $\rho$) is \textit{diophantine} if there are constants $K > 0$ and $\tau$ such that:
\begin{align}
\sum_{j = 1}^{k}\left|X_{j}\cdot\mathbf{n}\right|^{2}\geq\frac{K^{2}}{\norm{\mathbf{n}}^{2\tau}},\quad\mathbf{n}\in\mathbb{Z}^{d}\setminus0.
\end{align}
Our interest in diophantine translation actions is that a translation action is diophantine if and only if it is GH.
\begin{lemma}\label{L:GHactionOnTori}
Let $T:\mathbb{R}^{k}\times\mathbb{T}^{d}\to\mathbb{T}^{d}$ be a translation action. The action $T$ is ${\rm GH}$ if and only if $T$ is diophantine.
\end{lemma}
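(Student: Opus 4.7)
The plan is to diagonalize $\Delta_T$ in the Fourier basis of $\mathbb{T}^d$. Identifying each $X_j = \rho(e_j)$ with a constant vector field and writing $e_{\mathbf{n}}(x) = e^{2\pi i \mathbf{n} \cdot x}$ for $\mathbf{n} \in \mathbb{Z}^d$, a direct computation gives $X_j e_{\mathbf{n}} = 2\pi i (X_j \cdot \mathbf{n}) e_{\mathbf{n}}$ and hence $\Delta_T e_{\mathbf{n}} = \lambda_{\mathbf{n}} e_{\mathbf{n}}$ with $\lambda_{\mathbf{n}} = 4\pi^2\sum_{j=1}^{k}(X_j\cdot\mathbf{n})^2$. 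The proof then reduces, via the standard identification of $\mathcal{D}'(\mathbb{T}^d)$ with polynomially bounded Fourier series and $C^\infty(\mathbb{T}^d)$ with rapidly decaying ones, to comparing the size of $1/\lambda_{\mathbf{n}}$ with the decay of the given smooth datum.

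For the forward direction I would use that $\Delta_T$ is self-adjoint with respect to Lebesgue measure and invoke Lemma \ref{L:BackgroundTameEstimates1}, so that it suffices to show that $\Delta_T D = f \in C^\infty(\mathbb{T}^d)$ with $D \in \mathcal{D}'(\mathbb{T}^d)$ forces $D \in C^\infty(\mathbb{T}^d)$. Reading the equation on Fourier modes gives $\lambda_{\mathbf{n}}\hat{D}(\mathbf{n}) = \hat{f}(\mathbf{n})$, and the diophantine estimate $\lambda_{\mathbf{n}}\geq (2\pi K)^2/\norm{\mathbf{n}}^{2\tau}$ lets one divide and propagate the rapid decay of $\hat{f}$ onto $\hat{D}$.

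The converse splits into two cases depending on how the diophantine condition fails. If some $\mathbf{n}_0 \neq 0$ satisfies $X_j\cdot\mathbf{n}_0 = 0$ for every $j$, then $\lambda_{k\mathbf{n}_0} = 0$ for all $k\in\mathbb{Z}$, producing infinitely many independent elements in $\ker\Delta_T$ and violating the finite-dimensionality of the kernel guaranteed by Lemma \ref{L:BackgroundTameEstimates2} for any GH operator. Otherwise $\lambda_{\mathbf{n}} > 0$ on $\mathbb{Z}^d \setminus 0$, and the failure of the diophantine bound supplies a sequence $\mathbf{n}_k \in \mathbb{Z}^d\setminus 0$ with $\lambda_{\mathbf{n}_k}\leq \norm{\mathbf{n}_k}^{-k}$; since $\lambda_{\mathbf{n}}$ is bounded away from $0$ on each finite set of nonzero $\mathbf{n}$, the norms $\norm{\mathbf{n}_k}$ must be unbounded, and after passing to a subsequence one may assume they are distinct with $\norm{\mathbf{n}_k}\to\infty$. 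The distribution $D := \sum_k e_{\mathbf{n}_k}$ is then a bounded (hence distributional) series whose coefficients do not decay, so $D\notin C^\infty(\mathbb{T}^d)$, while $\Delta_T D = \sum_k \lambda_{\mathbf{n}_k} e_{\mathbf{n}_k}$ has Fourier coefficients decaying faster than any polynomial in $\norm{\mathbf{n}_k}$ and is therefore smooth, contradicting GH.

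The argument is essentially bookkeeping rather than an attack on a genuine obstacle; the only small item to verify carefully is the extraction of the sequence $\mathbf{n}_k$ in the second case, which is immediate from positivity of $\lambda_{\mathbf{n}}$ on each ball of $\mathbb{Z}^d\setminus 0$.
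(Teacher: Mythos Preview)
Your proof is correct; both directions rest on the same Fourier diagonalization as the paper, but the bridge between the eigenvalue bound and the GH property is different in each case. For the forward implication the paper shows that $\Delta_T$ maps onto $C_0^\infty(\mathbb{T}^d)$ and invokes Theorem~\ref{Thm:PropertiesOrbitLaplacian2}, while you verify the regularity criterion of Lemma~\ref{L:BackgroundTameEstimates1} directly by dividing Fourier coefficients; these are equivalent and equally short. For the converse the paper argues softly: Theorem~\ref{Thm:PropertiesOrbitLaplacian2} together with the open mapping theorem gives a continuous inverse on $C_0^\infty$, and plugging $u=e_{\mathbf{n}}$ into the resulting estimate $\norm{\Delta_T u}_s\ge C\norm{u}_0$ reads off the diophantine inequality in one line, with no case split. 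Your route is constructive---you exhibit an explicit non-smooth $D$ with $\Delta_T D$ smooth---which is longer but produces a concrete witness to the failure of hypoellipticity. One small correction to your extraction step: the bound $\lambda_{\mathbf{n}_k}\le\norm{\mathbf{n}_k}^{-k}$ by itself does not force $\norm{\mathbf{n}_k}$ to be unbounded (it is vacuous when $\norm{\mathbf{n}_k}=1$); you should also let the constant $K$ in the negated diophantine condition shrink with $k$, so that $\lambda_{\mathbf{n}_k}\to 0$, which then genuinely contradicts the positive lower bound of $\lambda$ on finite subsets of $\mathbb{Z}^d\setminus 0$.
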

\begin{proof}
One direction follows since the diophantine assumption implies that we can solve the \textit{small divisor problem}. Let $X_{j} = \rho(e_{j})$, $j = 1,...,k$, be the generators of $T$. For $\mathbf{n}\in\mathbb{Z}^{d}$:
\begin{align}
\Delta_{T}e^{2\pi i\mathbf{n}\cdot x} := -(X_{1}^{2}+...+X_{k}^{2})e^{2\pi i\mathbf{n}\cdot x} = 4\pi^{2}\sum_{j = 1}^{k}|X_{j}\cdot\mathbf{n}|^{2}.
\end{align}
So, given $v\in C^{\infty}(\mathbb{T}^{d})$, we write
\begin{align}
v(x) = \sum_{\mathbf{n}\in\mathbb{Z}^{d}}\Hat{v}(\mathbf{n})e^{2\pi i\mathbf{n}\cdot x} = \sum_{\mathbf{n}\in\mathbb{Z}^{d}\setminus0}\Delta_{T}\left[\frac{\Hat{v}(\mathbf{n})}{4\pi^{2}\sum_{j = 1}^{k}|X_{j}\cdot\mathbf{n}|^{2}}e^{2\pi i\mathbf{n}\cdot x}\right] + \Hat{v}(0).
\end{align}
Define
\begin{align}
u(x) = \sum_{\mathbf{n}\in\mathbb{Z}^{d}\setminus0}\frac{\Hat{v}(\mathbf{n})}{4\pi^{2}\sum_{j = 1}^{k}|X_{j}\cdot\mathbf{n}|^{2}}e^{2\pi i\mathbf{n}\cdot x},
\end{align}
then $\Delta_{T}u = v - \Hat{v}(0)$ (as a formal series). The diophantine condition implies that the formal series $u$ defines a smooth function, so $T$ is GH (by Theorem \ref{Thm:PropertiesOrbitLaplacian2}). Conversely, we want to show that if $T$ is GH then $T$ satisfies a diophantine condition. By Theorem \ref{Thm:PropertiesOrbitLaplacian2} and the open mapping theorem, the operator $\Delta_{T}:C_{0}^{\infty}(\mathbb{T}^{d})\to C_{0}^{\infty}(\mathbb{T}^{d})$ has a continuous inverse in the Fréchet topology. That is, for $r\in\mathbb{N}$ there is $s\in\mathbb{N}$ such that $\norm{\Delta_{T}u}_{s}\geq C_{r}\norm{u}_{r}$ for all $u\in C_{0}^{\infty}(\mathbb{T}^{d})$. In particular, there is $\tau$ such that for any $\mathbf{n}\in\mathbb{Z}^{d}\setminus0$
\begin{align}
(2\pi)^{2\tau}\norm{\mathbf{n}}^{2\tau}\cdot(2\pi)^{2}\sum_{j = 1}^{k}|X_{j}\cdot\mathbf{n}|^{2} = \norm{\Delta_{T}e^{2\pi i\mathbf{n}\cdot\mathbf{x}}}_{2\tau}\geq C_{0}\norm{e^{2\pi i\mathbf{n}\cdot\mathbf{x}}}_{0} = C_{0},
\end{align}
which shows, after rearranging terms, that $T$ is diophantine.
\end{proof}

\subsection{Proof of Theorem \ref{MainThm:ThmB} (i): The Greenfield-Wallach conjecture on nilmanifolds}\label{SubSec:GWConjectureOnNilmanifolds}

Let $G$ be a simply connected nilpotent lie group and $\Gamma\leq G$ a lattice. In this section, we prove $(i)$ in Theorem \ref{MainThm:ThmB} following \cite{Kocsard2007}. As a corollary, we also prove Theorem \ref{MainThm:ThmA}. The proof of Theorem \ref{MainThm:ThmB} $(i)$ follows from a general result on cocycles taking values in nilpotent groups. Similar ideas have been used, for example, in \cite{Rodriguez-HertzWang2014}.
\begin{lemma}\label{L:NilpotentCocycleRigidity}
Let $G$ be a simply connected nilpotent Lie group and $\alpha:\mathbb{R}^{k}\times M\to M$ a smooth ${\rm GH}$ action. If $\gamma:\mathbb{R}^{k}\times M\to G$ is a smooth $G-$valued cocycle then $\gamma$ is cohomologous to a constant cocycle.
\end{lemma}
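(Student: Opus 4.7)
The plan is to induct on the nilpotency step $\ell$ of $G$. The base case $\ell=1$ (so $G\cong\mathbb{R}^{r}$) is a direct combination of Theorem \ref{Thm:Cohomology1} with the cohomology calculation of Theorem \ref{Thm:DecompositionOfCochainSpaces}: under $T$, an $\mathbb{R}^{r}$-valued cocycle corresponds to an $r$-tuple of elements of $Z^{1}(\alpha)$, each such class is represented by a linear functional on $\mathbb{R}^{k}$ by Theorem \ref{Thm:DecompositionOfCochainSpaces}, and then the last clause of Theorem \ref{Thm:Cohomology1} exhibits $\gamma$ as cohomologous to a constant.

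For the inductive step let $G$ have step $\ell\geq 2$, set $Z:=G^{(\ell-1)}$, which is central in $G$ and isomorphic to $\mathbb{R}^{r}$, and let $\bar G := G/Z$, a simply connected nilpotent Lie group of step $\ell-1$. The inductive hypothesis applied to the projected cocycle $\bar\gamma := \pi\circ\gamma$ yields a homomorphism $\bar\phi:\mathbb{R}^{k}\to\bar G$ and a transfer $\bar b:M\to\bar G$ with $\bar\gamma(\mathbf{a},x) = \bar b(\alpha(\mathbf{a})x)^{-1}\bar\phi(\mathbf{a})\bar b(x)$. Since $Z\cong\mathbb{R}^{r}$ is contractible, the principal $Z$-bundle $G\to\bar G$ is smoothly trivial, so $\bar b$ has a smooth lift $b:M\to G$; replacing $\gamma$ by $b\gamma b^{-1}$ (which is cohomologous to $\gamma$) I may assume $\pi\circ\gamma = \bar\phi$. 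Fix now any smooth lift $\tilde\phi:\mathbb{R}^{k}\to G$ of $\bar\phi$ with $\tilde\phi(0)=e$ and decompose $\gamma(\mathbf{a},x) = \tilde\phi(\mathbf{a})\eta(\mathbf{a},x)$, where $\eta$ takes values in the central subgroup $Z$.

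Using centrality of $Z$, the cocycle identity for $\gamma$ translates into the twisted cocycle identity
\begin{align*}
\eta(\mathbf{a}_{1}+\mathbf{a}_{2},x) = c(\mathbf{a}_{1},\mathbf{a}_{2}) + \eta(\mathbf{a}_{1},\alpha(\mathbf{a}_{2})x) + \eta(\mathbf{a}_{2},x),
\end{align*}
where $c(\mathbf{a}_{1},\mathbf{a}_{2}) := \tilde\phi(\mathbf{a}_{1}+\mathbf{a}_{2})^{-1}\tilde\phi(\mathbf{a}_{1})\tilde\phi(\mathbf{a}_{2})\in Z$ is a smooth normalized $2$-cocycle on $\mathbb{R}^{k}$ (written additively) measuring the failure of $\tilde\phi$ to be a homomorphism. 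The key trick is to swap $\mathbf{a}_{1}\leftrightarrow\mathbf{a}_{2}$, subtract, and integrate over $M$ against the $\alpha$-invariant volume $\mu$ supplied by Theorem \ref{Thm:PropertiesOrbitLaplacian1}; $\alpha$-invariance of $\mu$ causes the $\eta$-terms to cancel in pairs, leaving the symmetry relation $c(\mathbf{a}_{1},\mathbf{a}_{2}) = c(\mathbf{a}_{2},\mathbf{a}_{1})$. Symmetry of $c$ is exactly what it means for the central extension of $\mathbb{R}^{k}$ by $Z$ defined by $c$ to be an abelian Lie group; being a smooth abelian extension of the simply connected $\mathbb{R}^{k}$ by the simply connected $Z$, it splits, producing a smooth $\psi_{0}:\mathbb{R}^{k}\to Z$ with $\psi_{0}(\mathbf{a}_{1}+\mathbf{a}_{2}) - \psi_{0}(\mathbf{a}_{1}) - \psi_{0}(\mathbf{a}_{2}) = c(\mathbf{a}_{1},\mathbf{a}_{2})$. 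Then $\phi(\mathbf{a}) := \tilde\phi(\mathbf{a})\psi_{0}(\mathbf{a})$ is a genuine homomorphism lifting $\bar\phi$, the residual $\eta' := \eta - \psi_{0}$ satisfies the untwisted cocycle equation as a $Z$-valued cocycle, and the base case applied to $\eta'$ produces $d:M\to Z$ and a constant $\psi:\mathbb{R}^{k}\to Z$ making $\eta'$ cohomologous to $\psi$. Centrality of $Z$ lets me rewrite $\gamma(\mathbf{a},x) = d(\alpha(\mathbf{a})x)^{-1}(\phi\psi)(\mathbf{a})d(x)$, which is the desired cohomology to the constant homomorphism $\phi\psi$.

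The chief obstacle is that an abelian subspace of $\mathfrak{g}/\mathfrak{z}$ need not lift to an abelian subspace of $\mathfrak{g}$, so generically $\bar\phi$ might fail to admit a homomorphism lift to $G$ (this is exactly the phenomenon behind the non-triviality of the Heisenberg extension of $\mathbb{R}^{2}$). The role of the GH hypothesis on $\alpha$ is precisely to circumvent this: the $\alpha$-invariant volume from Theorem \ref{Thm:PropertiesOrbitLaplacian1} is what forces the obstruction $c$ to be symmetric rather than skew, and symmetry is the exact condition needed for the central extension to split abelianly and thus for a homomorphism lift of $\bar\phi$ to exist.
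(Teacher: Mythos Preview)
Your proof is correct and follows essentially the same inductive scheme as the paper: induct on the step, project to $G/G^{(\ell-1)}$, lift the transfer function via a smooth section, and then use the $\alpha$-invariant volume to produce a homomorphism lift of $\bar\phi$ before trivializing the residual central cocycle via the base case. The only difference is packaging: where the paper directly defines $\rho(\mathbf{t}) = \tilde\phi(\mathbf{t})\int_{M}\eta(\mathbf{t},x)\,\intd\mu(x)$ and checks by hand that it is a homomorphism, you instead phrase the obstruction as a group $2$-cocycle $c$, use the invariant measure to force $c$ to be symmetric, and invoke splitting of abelian simply connected extensions; these are the same computation, since integrating your twisted cocycle identity shows that $\psi_{0}(\mathbf{a}) := \int_{M}\eta(\mathbf{a},x)\,\intd\mu(x)$ is exactly a primitive for $c$, recovering the paper's explicit $\rho$.
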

\begin{remark}
It suffices to assume that $\alpha$ satisfies $H^{1}(\alpha) = \Lambda^{1}(\mathbb{R}^{k})\cong\mathbb{R}^{k}$ and that $\alpha$ is minimal, so the assumption that $\alpha$ is GH can be weakened.
\end{remark}
\begin{proof}
We will prove the theorem by induction on the step of $G$. If $G$ is $1-$step then $G = \mathbb{R}^{n}$ for some $n$. The lemma follows from Theorems \ref{Thm:PropertiesOrbitLaplacian2} and \ref{Thm:Cohomology1} since $\alpha$ is assumed to be GH.

Assume that the theorem holds when $G$ is step $\ell-1$, with $\ell > 1$. Given a cocycle $\gamma:\mathbb{R}^{k}\times M\to G$ let $\Tilde{\gamma}$ be the $G/G_{(\ell)}-$valued cocycle defined by $\Tilde{\gamma}:\mathbb{R}^{k}\times M\to G\to G/G_{(\ell)}$. Since $G/G_{(\ell)}$ has step $\ell-1$, we find $\Tilde{b}:M\to G/G_{(\ell)}$ and $\Tilde{\rho}:\mathbb{R}^{k}\to G/G_{(\ell)}$ such that $\Tilde{\gamma}(\mathbf{t},x) = \Tilde{b}(\alpha(\mathbf{t})x)^{-1}\Tilde{\rho}(\mathbf{t})\Tilde{b}(x)$. Let $S:G/G_{(\ell)}\to G$ be any smooth section, and let $\widehat{b} = S\circ\Tilde{b}$, $\widehat{\rho}(\mathbf{t}) = S\circ\Tilde{\rho}(\mathbf{t})$. It is immediate
\begin{align*}
\widehat{b}(\alpha(\mathbf{t})x)\gamma(\mathbf{t},x)\widehat{b}(x)^{-1}\widehat{\rho}(\mathbf{t})^{-1}\in G_{(\ell)}
\end{align*}
since if we project it by the map $G\to G/G_{(\ell)}$ then we get identity. Define $\rho(\mathbf{t})$ by
\begin{align}
\rho(\mathbf{t}) := \widehat{\rho}(\mathbf{t})\int_{M}\widehat{b}(\alpha(\mathbf{t})x)\gamma(\mathbf{t},x)\widehat{b}(x)^{-1}\widehat{\rho}(\mathbf{t})^{-1}\intd\mu(x)
\end{align}
where $\mu$ is the unique invariant measure (Theorem \ref{Thm:PropertiesOrbitLaplacian2}), and the integral makes sense since $G_{(\ell)}\cong\mathbb{R}^{d_{\ell}}$ for some integer $d_{\ell}$. Let
\begin{align}
\widehat{b}(\alpha(\mathbf{t})x)\gamma(\mathbf{t},x)\widehat{b}(x)^{-1} = \widehat{\gamma}(\mathbf{t},x).
\end{align}
Using that $G_{(\ell)}$ is central in $G$, and noting that $\Hat{\rho}(\mathbf{t}+\mathbf{s})\Hat{\rho}(\mathbf{t})^{-1}\Hat{\rho}(\mathbf{s})^{-1}\in G_{(\ell)}$, we obtain
\begin{align*}
\rho(\mathbf{t}+\mathbf{s}) = & \widehat{\rho}(\mathbf{t}+\mathbf{s})\int_{M}\widehat{\gamma}(\mathbf{t}+\mathbf{s},x)\widehat{\rho}(\mathbf{t}+\mathbf{s})^{-1}\intd\mu(x) = \\ = &
\widehat{\rho}(\mathbf{t}+\mathbf{s})\int_{M}\widehat{\gamma}(\mathbf{t},\alpha(\mathbf{s})x)\widehat{\gamma}(\mathbf{s},x)\widehat{\rho}(\mathbf{t}+\mathbf{s})^{-1}\intd\mu(x) = \\ = &
\widehat{\rho}(\mathbf{t}+\mathbf{s})\int_{M}\widehat{\gamma}(\mathbf{t},\alpha(\mathbf{s})x)\widehat{\rho}(\mathbf{t})^{-1}\widehat{\rho}(\mathbf{t})\widehat{\gamma}(\mathbf{s},x)\widehat{\rho}(\mathbf{s})^{-1}\widehat{\rho}(\mathbf{s})\cdot \\ \cdot & 
\widehat{\rho}(\mathbf{t}+\mathbf{s})^{-1}\intd\mu(x) = \\ = &
\left(\widehat{\rho}(\mathbf{t})\int_{M}\widehat{\gamma}(\mathbf{t},\alpha(\mathbf{s})x)\widehat{\rho}(\mathbf{t})^{-1}\intd\mu(x)\right)\cdot \\ \cdot &
\left(\widehat{\rho}(\mathbf{s})\int_{M}\widehat{\gamma}(\mathbf{s},x)\widehat{\rho}(\mathbf{s})^{-1}\intd\mu(x)\right) = \\ = &
\rho(\mathbf{t})\rho(\mathbf{s})
\end{align*}
where we have used that $\int_{M}a(x)b(x)\intd\mu(x) = \int_{M}a(x)\intd\mu(x)\int_{M}b(x)\intd\mu(x)$ for $a,b:M\to G_{(\ell)}$ (we are using multiplicative notation for $G_{(\ell)}$, the equality is clear in additive notation). That is, $\rho:\mathbb{R}^{k}\to G$ is a homomorphism. Let $Z:M\to G^{\ell}$, and define $b(x) = \widehat{b}(x)Z(x)$. We want to choose $Z$ such that $b(\alpha(\mathbf{t})x)^{-1}\gamma(\mathbf{t},x)b(x) = \rho(\mathbf{t})$. Define $\eta:\mathbb{R}^{k}\times M\to G_{(\ell)}$ by
\begin{align*}
\eta(\mathbf{t},x) = \widehat{b}(\alpha(\mathbf{t})x)\gamma(\mathbf{t},x)\widehat{b}(x)^{-1}\rho(\mathbf{t})^{-1} = \widehat{\gamma}(\mathbf{t},x)\rho(\mathbf{t})^{-1}.
\end{align*}
Note that
\begin{align*}
\eta(\mathbf{t}+\mathbf{s},x) = & \widehat{\gamma}(\mathbf{t}+\mathbf{s},x)\rho(\mathbf{t}+\mathbf{s})^{-1} = \widehat{\gamma}(\mathbf{t},\alpha(\mathbf{s})x)\widehat{\gamma}(\mathbf{s},x)\rho(\mathbf{s})^{-1}\rho(\mathbf{t})^{-1} = \\ = &
\widehat{\gamma}(\mathbf{t},\alpha(\mathbf{s})x)\rho(\mathbf{t})^{-1}\widehat{\gamma}(\mathbf{s},x)\rho(\mathbf{s})^{-1} = \eta(\mathbf{t},\alpha(\mathbf{s})x)\eta(\mathbf{s},x)
\end{align*}
where we used that $\widehat{\gamma}(\mathbf{s},x)\rho(\mathbf{s})^{-1} = \eta(\mathbf{s},x)\in G_{(\ell)}$ is central. It follows that $\eta$ is a $G_{(\ell)}-$valued cocycle. The definition of $\rho$ also implies that $\int_{M}\eta(\mathbf{t},x)\intd\mu(x) =e$ for all $\mathbf{t}\in\mathbb{R}^{k}$. Since $H^{1}(\alpha) = \Lambda^{1}(\mathbb{R}^{k})\cong\mathbb{C}^{k}$ (by Theorems \ref{Thm:PropertiesOrbitLaplacian2} and \ref{Thm:Cohomology1}) we find $Z:M\to G_{(\ell)}$ such that $\eta(\mathbf{t},x) = Z(\alpha(\mathbf{t})x)^{-1}Z(x)$. The lemma follows if we let $b(x) = \widehat{b}(x)Z(x)$.
\end{proof}
\begin{proof}[Proof of Theorem \ref{MainThm:ThmB}, (i)]
Let $\alpha:\mathbb{R}^{k}\times M_{\Gamma}\to M_{\Gamma}$ be ${\rm GH}$ where $M_{\Gamma} = \Gamma\setminus G$ is a compact nilmanifold. Define $\gamma:\mathbb{R}^{k}\times M_{\Gamma}\to G$ by
\begin{align*}
\alpha(\mathbf{t})x = x\left[\gamma(\mathbf{t},x)\right]^{-1}.
\end{align*}
The map $\gamma$ is uniquely defined if $\gamma(0,x) = e$. Given $\mathbf{t},\mathbf{s}\in\mathbb{R}^{k}$ we have
\begin{align*}
\alpha(\mathbf{t} + \mathbf{s})x = & x\left[\gamma(\mathbf{t} + \mathbf{s},x)\right]^{-1} = \alpha(\mathbf{t})\alpha(\mathbf{s})x = \left[\alpha(\mathbf{s})x\right]\cdot\left[\gamma(\mathbf{t},\alpha(\mathbf{s})x)\right]^{-1} \\ &
x\left[\gamma(\mathbf{s},x)\right]^{-1}\cdot\left[\gamma(\mathbf{t},\alpha(\mathbf{s})x)\right]^{-1} = x\left[\gamma(\mathbf{t},\alpha(\mathbf{s})x)\gamma(\mathbf{s},x)\right]^{-1}
\end{align*}
so, $\gamma$ is a $G-$valued cocycle over $\alpha$. By Lemma \ref{L:NilpotentCocycleRigidity}, the cocycle $\gamma$ is cohomologous to a constant cocycle. That is, we find a homomorphism $\rho:\mathbb{R}^{k}\to G$ and some $b:M_{\Gamma}\to G$ such that $\gamma(\mathbf{t},x) = b(\alpha(\mathbf{t})x)^{-1}\rho(\mathbf{t})b(x)$. Define $H:M_{\Gamma}\to M_{\Gamma}$ by $H(x) = xb(x)^{-1}$. Then
\begin{align*}
H(\alpha(\mathbf{t})x) = & \left[\alpha(\mathbf{t})x\right]b(\alpha(\mathbf{t})x)^{-1} = x\left[b(\alpha(\mathbf{t})x)^{-1}\rho(\mathbf{t})b(x)\right]^{-1}b(\alpha(\mathbf{t})x)^{-1} = \\ &
xb(x)^{-1}\rho(-\mathbf{t}) = H(x)\rho(-\mathbf{t}).
\end{align*}
So, $H$ conjugates $\alpha$ to the algebraic action induced by $\rho:\mathbb{R}^{k}\to G$. Since $H$ is smooth, we define $K := \{\det(D_{x}H) = 0\}\subset M_{\Gamma}$ which is compact and $\alpha-$invariant. The minimality of $\alpha$ implies $K = \emptyset$ or $K = M_{\Gamma}$, but the latter possibility is excluded by Sard's theorem. We conclude $K = \emptyset$. That is, $H:M_{\Gamma}\to M_{\Gamma}$ is a surjective (since it is homotopic to identity) local diffeomorphism. Finally, since $H$ is homotopic to identity $H$ has degree $1$ so the fibers of $H$ contain only one point, so $H$ is a diffeomorphism.
\end{proof}
We proceed to the proof of Theorem \ref{MainThm:ThmA}. By Theorem \ref{MainThm:ThmB} $(i)$, any GH flow on a nilmanifold $M_{\Gamma}$ is a homogeneous nilflow. By \cite{ForniRHGWC} (or \cite{ForniCohEq}) this finishes the proof of Theorem \ref{MainThm:ThmA}. However, we will give alternative proof (that nilflows are only GH if they are diophantine flow on tori) since we will need some of these results in the proof of $(ii)$ and $(iii)$ in Theorem \ref{MainThm:ThmB}. Recall that $H^{g}$ is the $g$th Heisenberg group (Example \ref{Ex:HeisenberGroup}).
\begin{lemma}\label{L:GHonHeisenberg}
Let $\Gamma\leq H^{g}$ be a lattice, $M_{\Gamma}$ the associated nilmanifold, $\rho:\mathbb{R}^{k}\to H^{g}$ a homomorphism, and $\alpha(\mathbf{t})x = x\rho(\mathbf{t})$ be the associated translation action on $M_{\Gamma}$. If $\alpha$ is GH then ${\rm Im}(\rho)\cap[H^{g},H^{g}] = [H^{g},H^{g}]$ and the projected translation action on $M_{\Gamma}/[H^{g},H^{g}]\cong\mathbb{T}^{2g}$ is diophantine.
\end{lemma}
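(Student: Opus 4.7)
The plan is to derive both conclusions from the central $S^1$-action on $M_\Gamma$ induced by $Z(H^g)/(\Gamma\cap Z(H^g))$ together with the decomposition of $L^2(M_\Gamma)$ along its central-character isotypes. The second assertion is almost immediate: the quotient map $\pi:M_\Gamma\to M_\Gamma/[H^g,H^g]\cong\mathbb{T}^{2g}$ is a submersion intertwining $\alpha$ with its projected translation action $\bar\alpha$, so Lemma \ref{L:PropertiesOrbitLaplacian2} gives that $\bar\alpha$ is GH and Lemma \ref{L:GHactionOnTori} then yields diophantineness.

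For the first assertion I would argue by contradiction, assuming $Z\notin\text{Im}(\rho_*)$. The image $\mathfrak{a}:=\text{Im}(\rho_*)\subset\mathfrak{h}^g$ is abelian (as $\mathbb{R}^k$ is), and the bracket identity $[\rho_*(e_i),\rho_*(e_j)]=\omega(\pi_0\rho_*(e_i),\pi_0\rho_*(e_j))Z$, where $\pi_0:\mathfrak{h}^g\to\mathbb{R}^{2g}=\mathfrak{h}^g/\mathbb{R}Z$ is the natural projection and $\omega$ is the standard symplectic form, shows that $\pi_0(\mathfrak{a})$ is isotropic. Injectivity of $\rho_*$ (local freeness) together with the contradiction hypothesis make $\pi_0|_{\mathfrak{a}}$ injective, so $\dim\pi_0(\mathfrak{a})=k\leq g$. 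I would then extend $\pi_0(\mathfrak{a})$ to a Lagrangian $\mathfrak{L}\subset\mathbb{R}^{2g}$ and use $\mathfrak{L}+\mathbb{R}Z$ as a polarization to realize every irreducible unitary representation $V_n$ of $H^g$ with central character $n\neq 0$ as $L^2(\mathbb{R}^g)$, in such a way that every element of $\mathfrak{L}+\mathbb{R}Z$ acts by multiplication by a complex affine function of $t\in\mathbb{R}^g$.

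In this realization, writing $\rho_*(e_j)=v_j+c_jZ$ with $v_j\in\pi_0(\mathfrak{a})\subset\mathfrak{L}$, the equation $\rho_*(e_j)\phi=0$ becomes the algebraic identity $(\ell_j(t)+c_j)\phi=0$ with $\ell_j$ the real linear form on $\mathbb{R}^g$ associated to $v_j$ by the symplectic pairing between $\mathfrak{L}$ and a complement. The resulting real affine system $Mt=-c$ is consistent: if $\sum_j s_j\ell_j=0$ then $\sum_j s_j v_j=0$, so $\sum_j s_j\rho_*(e_j)\in\mathbb{R}Z\cap\text{Im}(\rho_*)=0$ by the contradiction hypothesis, and injectivity of $\rho_*$ forces $s=0$; thus the coefficient matrix is surjective. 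The solution set $A\subset\mathbb{R}^g$ is therefore a non-empty affine subspace of dimension $g-k$, and natural surface Lebesgue measure on $A$ provides a nonzero $\mathfrak{a}$-invariant tempered distribution $\phi\in\mathcal{S}'(\mathbb{R}^g)=V_n^{-\infty}$ that is not a Schwartz function.

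To finish I would appeal to Kirillov's decomposition $L^2(M_\Gamma)_n\cong V_n\otimes M_n$ (for $n\neq 0$, with $M_n\neq 0$ for infinitely many $n$), which also identifies the smooth and distributional vectors in the isotype with $\mathcal{S}(\mathbb{R}^g)\otimes M_n$ and $\mathcal{S}'(\mathbb{R}^g)\otimes M_n$ respectively. Pairing $\phi$ with a nonzero $m\in M_n$ then produces an $\alpha$-invariant distribution on $M_\Gamma$ supported in the $n\neq 0$ isotype that is not smooth, contradicting Theorem \ref{Thm:PropertiesOrbitLaplacian1}, which forces the invariant distribution space to be spanned by the volume form $\mu$ (lying in the $n=0$ isotype). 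Since $Z$ is central, $Z\in\text{Im}(\rho_*)$ gives $\exp(\mathbb{R}Z)=[H^g,H^g]\subset\text{Im}(\rho)$, completing the argument. The main obstacle I anticipate is not the symplectic linear algebra but the Kirillov bookkeeping: identifying smooth and distributional vectors of each isotype of $L^2(M_\Gamma)$ cleanly enough that a non-Schwartz invariant in $V_n$ really descends to a non-smooth $\alpha$-invariant distribution on the nilmanifold.
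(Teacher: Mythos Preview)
Your argument is correct. Both you and the paper reduce to a Schr\"odinger realization of some irreducible $V_n\subset L^2(M_\Gamma)$ with polarization containing the isotropic subspace $\pi_0(\mathfrak a)$, so that the generators of $\alpha$ act by multiplication by affine functions. The difference lies only in the final contradiction: the paper normalizes (via an automorphism of $H^g$) so that ${\rm Im}(D\rho)={\rm span}(X_1,\dots,X_k)$, observes that $\Delta_\alpha$ then acts on $V_n^\infty$ as multiplication by $4\pi^2(x_1^2+\cdots+x_k^2)$, and notes that the Schwartz function $e^{-|x|^2}$ cannot lie in its image since it does not vanish on $\{x_1=\cdots=x_k=0\}$; non-surjectivity of $\Delta_\alpha$ on $C_0^\infty(M_\Gamma)$ then contradicts Theorem~\ref{Thm:PropertiesOrbitLaplacian2}. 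This sidesteps precisely the bookkeeping you flagged, since it needs only the standard identification $V_n^\infty\cong\mathcal S(\mathbb R^g)$ and the fact that the orthogonal projection onto $V_n$ commutes with $\Delta_\alpha$ and preserves smooth vectors, rather than the extension of an embedding $V_n\hookrightarrow L^2(M_\Gamma)$ to distributional vectors. Your surface measure on the affine set $A$ is exactly the distribution dual to the paper's obstruction.
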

\begin{proof}
The projected action on $\mathbb{T}^{2g}$ is diophantine by Lemmas \ref{L:PropertiesOrbitLaplacian2} and \ref{L:GHactionOnTori}. Assume that ${\rm Im}(\rho)\cap[H^{g},H^{g}] = e$. Let $X_{1},...,X_{g},Y_{1},...,Y_{g},Z\in\mathfrak{h}^{g}$ be such that $[X_{i},Y_{i}] = Z$. Assume, without loss of generality, that $X_{1},...,X_{k}$ span ${\rm Im}(D\rho)$ (so $X_{1},...,X_{k}$ generate $\alpha$). Let $\pi_{\hbar}$, $\hbar\neq0$, be any Schrödinger representation on $L^{2}(\mathbb{R}^{g})$ (see \cite[Chapter 10]{HarmonicAnalysis} or \cite[Example 2.2.6]{CorwinGreenleaf}). The operator $\Delta_{\alpha}$ can be written in the representation $\pi_{\hbar}$ as
\begin{align*}
u(x_{1},...,x_{g})\mapsto 4\pi^{2}(x_{1}^{2}+...+x_{k}^{2})\cdot u(x_{1},...,x_{g}).
\end{align*}
It follows that any smooth vector $u\in L^{2}(\mathbb{R}^{g})$ (which coincide with Schwartz function, \cite[Corollary 4.1.2]{CorwinGreenleaf}) that lie in the image of $\Delta_{\alpha}$ satisfy $u(0) = 0$. But, $e^{-|x|^{2}}\in L^{2}(\mathbb{R}^{g})$ is a smooth vector that does not vanish at $0$. So, $\Delta_{\alpha}$ can not be surjective on the representation $\pi_{\hbar}$. Since $L^{2}(M_{\Gamma})$ contain a Schrödinger representation, it follows that $\Delta_{\alpha}:C_{0}^{\infty}(M_{\Gamma})\to C_{0}^{\infty}(M_{\Gamma})$ is not surjective. By Theorem \ref{Thm:PropertiesOrbitLaplacian2} $\alpha$ is not GH.
\end{proof}
We can now finish the proof of Theorem \ref{MainThm:ThmB}.
\begin{proof}[Proof of Theorem \ref{MainThm:ThmB} (i)]
Let $\phi_{t}$ be a GH flow. By Theorem \ref{MainThm:ThmB} $(i)$ it suffices to consider the case when the flow $\phi_{t}$ is homogeneous. Note that $M_{\Gamma}$ fibers over $\Lambda\setminus H^{g}$ for some $g$, where $g\neq0$ unless $M_{\Gamma}$ is a torus (by quotiening out $G^{(3)}$ we obtain a $2-$step nilpotent nilmanifold and when $G$ is $2-$step we can quotient out a codimension one rational subgroup of the derived subgroup to obtain a Heisenberg group times a torus, finally we quotient out the torus). Let $\Tilde{\phi}_{t}$ be the induced flow on $\Lambda\setminus H^{g}$, Lemma \ref{L:PropertiesOrbitLaplacian2} implies that $\Tilde{\phi}_{t}$ is GH. By Lemma \ref{L:GHonHeisenberg} the action $\Tilde{\phi}_{t}$ intersect the center $[H^{g},H^{g}]$, but this is a contradiction since the projected flow on the base torus $\mathbb{T}^{2g}\cong\Lambda\setminus H^{g}/[H^{g},H^{g}]$ must be diophantine (Lemma \ref{L:GHactionOnTori}).
\end{proof}

\subsection{Proof of Theorem \ref{MainThm:ThmB} (ii): Manifolds with large first Betti number}\label{SubSec:RigidityLargeFirstBettiNumber}

Let $\alpha:\mathbb{R}^{k}\times M\to M$ be a GH action on $M$ where $b_{1}(M)\geq d-1$, $\dim M = d$. We now start the proof of Theorem \ref{MainThm:ThmB} $(ii)$. By Theorem \ref{Thm:ComplicatedTopologyGiveSubmersion} there is a submersion $\pi:M\to\mathbb{T}^{b_{1}(M)}$. This immediately implies $b_{1}(M)\leq\dim(M)$, and if $b_{1}(M) = \dim(M)$, then $M$ is a torus. If $M$ is a torus, then $(i)$ in Theorem \ref{MainThm:ThmB} finishes the proof of $(ii)$ in Theorem \ref{MainThm:ThmB} (alternatively we can apply \cite[Theorem 2.5]{Kocsard2007}). Suppose instead $b_{1}(M) = d - 1$.
\begin{lemma}\label{L:ReductionFromLargeBettiTohomogeneous}
If $b_{1}(M) = d-1$ then there is a simply connected $2-$step nilpotent Lie group $G$ with $\dim[\mathfrak{g},\mathfrak{g}] = 1$ such that $M = \Gamma\setminus G$ and the action $\alpha$ is homogeneous on $\Gamma\setminus G$.
\end{lemma}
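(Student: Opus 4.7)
The plan is to combine the cocycle rigidity of Lemma~\ref{L:NilpotentCocycleRigidity} with the classification of principal circle bundles over tori. First, Theorem~\ref{Thm:ComplicatedTopologyGiveSubmersion} gives a submersion $\pi\colon M\to\mathbb{T}^{d-1}$ intertwining $\alpha$ with a translation action $\beta$, which is diophantine by Lemmas~\ref{L:PropertiesOrbitLaplacian2} and~\ref{L:GHactionOnTori}. Compactness of $M$ together with $\dim M - \dim \mathbb{T}^{d-1} = 1$ makes $\pi$ a smooth bundle whose fibers are disjoint unions of circles (Ehresmann). Passing to the Stein factorization $M\to N \to \mathbb{T}^{d-1}$, where $N \to \mathbb{T}^{d-1}$ is a finite cover (hence $N$ is again a torus) and $\alpha$ descends to a diophantine translation on $N$, I may assume without loss of generality that $\pi$ has connected fibers.

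Next, I would upgrade $\pi$ to a principal $S^{1}$-bundle. The line field $\ker D\pi \subset TM$ is $\alpha$-invariant, and disintegrating the unique $\alpha$-invariant volume $\mu$ along $\pi$ yields conditional probability measures on fibers whose total masses are $\beta$-invariant, hence constant by unique ergodicity (Theorem~\ref{Thm:PropertiesOrbitLaplacian1}). A unit-speed section $Y$ of $\ker D\pi$ with respect to these conditionals is therefore $\alpha$-invariant and generates a free $S^{1}$-action on $M$ commuting with $\alpha$, realizing $\pi$ as a principal $S^{1}$-bundle. Because $b_1(M) = d-1 < d$, the Gysin sequence forces the Euler class $e \in H^2(\mathbb{T}^{d-1},\mathbb{Z})$ to be nonzero, and the smooth classification of principal $S^{1}$-bundles over $\mathbb{T}^{d-1}$ identifies $M$, as an $S^{1}$-bundle, with a compact 2-step nilmanifold $\Gamma\setminus G$ whose Lie algebra is generated by $X_1,\ldots,X_{d-1},Z$ with $Z$ central and tangent to the fibers, and $[X_i, X_j] = e(e_i, e_j)\, Z$, so that $\dim[\mathfrak{g},\mathfrak{g}] = 1$.

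Finally, under this bundle isomorphism the transferred $\alpha$-action commutes with the central $S^{1}$ and still projects to $\beta$. Fixing a reference homogeneous lift $\tilde\beta$ of $\beta$ on $\Gamma\setminus G$, the discrepancy $\alpha(\mathbf{t}) \circ \tilde\beta(\mathbf{t})^{-1}$ is a gauge transformation realized by multiplication by $\exp(2\pi i\, g(\mathbf{t}, \pi(\cdot)))$ for a smooth $S^{1}$-valued cocycle $g\colon\mathbb{R}^k\times\mathbb{T}^{d-1}\to S^{1}$ over $\beta$. Continuously lifting $g$ to an $\mathbb{R}$-valued cocycle $\tilde g$ (using that $\mathbb{R}^k$ is simply connected) and applying Lemma~\ref{L:NilpotentCocycleRigidity} with the simply connected abelian target $\mathbb{R}$ to the GH action $\beta$ produces a smooth $b\colon\mathbb{T}^{d-1}\to\mathbb{R}$ and a homomorphism $\phi\colon\mathbb{R}^k\to\mathbb{R}$ with $\tilde g(\mathbf{t}, y) = -b(\beta(\mathbf{t}) y) + \phi(\mathbf{t}) + b(y)$. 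The smooth gauge transformation $x\mapsto x\cdot \exp(2\pi i\, b(\pi(x)))$ then conjugates $\alpha$ into the translation action generated by $\tilde\beta(\mathbf{t}) \cdot \exp(2\pi i\, \phi(\mathbf{t}))$ on $\Gamma\setminus G$, proving that $\alpha$ is homogeneous. The main obstacle is identifying $M$ as a principal $S^{1}$-bundle smoothly isomorphic to a homogeneous model; once such a reference exists, the cocycle rigidity of Lemma~\ref{L:NilpotentCocycleRigidity} upgrades this identification to a smooth conjugacy with a translation action.
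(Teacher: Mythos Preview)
Your approach is correct but follows a genuinely different route from the paper. The paper works entirely on $M$: it defines a fibre-tangent vector field $Z$ by $\iota_Z\mu=\omega_1\wedge\cdots\wedge\omega_{d-1}$ (with $\omega_i=s^*\intd x_i$), shows $[X_j,Z]=0$ directly from $\alpha$-invariance of $\mu$ and the $\omega_i$, then lifts the coordinate fields $\partial_j$ on $\mathbb{T}^{d-1}$ to vector fields $Y_j$ on $M$ and uses $H^1(\alpha)\cong\Lambda^1(\mathbb{R}^k)$ (Theorem~\ref{Thm:DecompositionOfCochainSpaces}) applied to the cocycles coming from $[X_i,Y_j]$ to adjust the $Y_j$ until all brackets $[X_i,Y_j]$, $[Y_i,Y_j]$, $[Y_j,Z]$ are constant multiples of $Z$; finally, unimodularity of $G$ (which carries a lattice) forces $Z$ to be central, hence $G$ is $2$-step and not abelian since $b_1(M)<d$. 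You instead first identify $M$ \emph{topologically} with a model nilmanifold via the smooth classification of principal $S^1$-bundles over tori together with the Gysin sequence, and only afterwards conjugate $\alpha$ to a translation by solving an $S^1$-valued cocycle over the \emph{quotient} action $\beta$ on $\mathbb{T}^{d-1}$. Your route cleanly separates topology from dynamics and makes the role of the Euler class explicit, at the cost of importing the bundle classification as a black box; the paper's route is self-contained, building the Lie algebra frame by hand and never invoking an external homogeneous model. Two steps in your outline deserve one extra line each: that the lift $\tilde g$ of the $S^1$-valued cocycle remains an honest $\mathbb{R}$-valued cocycle (the integer defect in the cocycle identity is continuous on a connected domain and vanishes at $\mathbf{t}=\mathbf{s}=0$), and that every nonzero class in $H^2(\mathbb{T}^{d-1};\mathbb{Z})$ is realised by a $2$-step nilmanifold with one-dimensional centre (normal form for integral skew-symmetric forms).
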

\begin{proof}
Let $X_{1},...,X_{k}\in\mathbb{R}^{k}$ be generators of $\alpha$. Let $s:M\to\mathbb{T}^{d-1}$ be the submersion from Theorem \ref{Thm:ComplicatedTopologyGiveSubmersion}, and let $\omega_{i} = s^{*}\intd x_{i}\in\Omega^{1}(M)$. Since $s$ semi-conjugate $\alpha$ onto a translation action, each $\omega_{i}$ is $\alpha-$invariant. Define $Z\in\Gamma(TM)$ as the unique vector field satisfying $\iota_{Z}\mu = \omega_{1}\wedge...\wedge\omega_{d-1}$ where $\mu$ is the $\alpha-$invariant volume form (Theorem \ref{Thm:PropertiesOrbitLaplacian1}). Let $\nu$ be the Haar measure on $\mathbb{T}^{d-1}$, then $s^{*}\nu = \omega_{1}\wedge...\wedge\omega_{d-1} = \iota_{Z}\mu$. For each $X_{j}$ we have
\begin{align*}
\iota_{[X_{j},Z]}\mu = [\mathcal{L}_{X_{j}},\iota_{Z}]\mu = \mathcal{L}_{X_{j}}\iota_{Z}\mu - \iota_{Z}\mathcal{L}_{X_{j}}\mu = 0
\end{align*}
since $\omega_{1}\wedge...\wedge\omega_{d-1}$ and $\mu$ are both $\alpha-$invariant. The map $T_{x}M\ni Y\mapsto\iota_{Y}\mu$ is an isomorphism since $\mu$ is a volume form. It follows that $[X_{j},Z] = 0$, so $Z$ commute with the action $\alpha$. Moreover, if $v_{1},...,v_{d-2}\in T_{x}M$ then
\begin{align*}
0 = & (\iota_{Z}\iota_{Z}\mu)(v_{1},...,v_{d-2}) = (\iota_{Z}s^{*}\nu)(v_{1},...,v_{d-2}) = s^{*}\nu(Z,v_{1},...,v_{d-2}) = \\ = &
\nu(Ds(Z),Ds(v_{1}),...,Ds(v_{d-2})).
\end{align*}
Since we can choose $v_{1},...,v_{d-2}$ freely, $\nu$ is a volume form on $\mathbb{T}^{d-1}$, and $s$ is a submersion it follows that $Ds(Z) = 0$, which implies $\ker Ds = \mathbb{R}Z$.

Let $Y_{1},...,Y_{d-1}$ be vector fields that cover $\partial_{x^{1}},...,\partial_{x^{d-1}}$ on $\mathbb{T}^{d-1}$ under $s$ ($Ds(Y_{j}) = \partial_{x^{j}}$). For all $i = 1,...,k$ and $j = 1,...,d-1$, 
\begin{align}
Ds([X_{i},Y_{j}]) = [Ds(X_{i}),Ds(Y_{j})] = 0
\end{align}
since both $Ds(Y_{j})$ and $Ds(X_{i})$ are linear. With $j$ fixed, let $[X_{i},Y_{j}] = v_{i}\cdot Z$ where $v_{i}\in C^{\infty}(M)$. We claim that $X_{i}v_{\ell} = X_{\ell}v_{i}$, so the functions $v_{i}$ form an element of $Z^{1}(\alpha)$ (formula \ref{Eq:SecondCoboundaryOperator}). Indeed, the Jacobi identity implies
\begin{align*}
X_{\ell}v_{i}\cdot Z = & [X_{\ell},[X_{i},Y_{j}]] = -\left([Y_{j},[X_{\ell},X_{i}]] + [X_{i},[Y_{j},X_{\ell}]]\right) = \\ = &
[X_{i},[X_{\ell},Y_{j}]] = \mathcal{L}_{X_{i}}(v_{\ell}\cdot Z) = X_{i}v_{\ell}\cdot Z + v_{\ell}\cdot[X_{i},Z] = \\ = &
X_{i}v_{\ell}\cdot Z.
\end{align*}
Theorem \ref{Thm:DecompositionOfCochainSpaces} implies that there is $u_{j}$ such that $v_{i} = X_{i}u_{j} + c_{i}$ for $i = 1,...,k$, where $c_{i}\in\mathbb{R}$. Define $\Tilde{Y}_{j} = Y_{j} - u_{j}\cdot Z$, then $[X_{i},\Tilde{Y}_{j}] = -c_{i}Z$. That is, we may assume without loss of generality that $Y_{1},...,Y_{d-1}$ are chosen such that $[X_{i},Y_{j}]\in\mathbb{R}Z$ for all $i,j$, we will do this in the remainder. Note that $Ds[Y_{j},Z] = 0$ since $Ds(Z) = 0$, so $[Y_{j},Z] = f\cdot Z$. But also
\begin{align*}
X_{i}f\cdot Z = \mathcal{L}_{X_{i}}[Y_{j},Z] = -[[X_{i},Y_{j}],Z] - [Y_{j},[X_{i},Z]] = 0,
\end{align*}
since $[X_{i},Y_{j}]\in\mathbb{R}Z$ and $[X_{i},Z] = 0$. Minimality of $\alpha$ implies that $f$ is constant, so $[Y_{j},Z] = \lambda_{j}Z$ for some $\lambda_{j}\in\mathbb{R}$. Again, $Ds[Y_{i},Y_{j}] = 0$ so $[Y_{i},Y_{j}] = u\cdot Z$ for some $u\in C^{\infty}(M)$. Differentiating along $X_{i}$, and using the Jacobi identity we obtain $X_{i}u\cdot Z = 0$. It follows, again, that $u$ is constant so $[Y_{i},Y_{j}] = c_{ij}Z$ for some $c_{ij}\in\mathbb{R}$.

Let $\mathfrak{g}$ be the Lie algebra generated by $Y_{1},...,Y_{d},Z$, with brackets $[Y_{i},Y_{j}] = c_{ij}Z$, and $[Y_{i},Z] = \lambda_{i}Z$. Let $G$ be the associated simply connected Lie group. Since $Y_{1},...,Y_{d-1},Z$ form a frame of $M$, $G$ acts locally freely on $M$ and the stabilizer of this action, $\Gamma\leq G$, is a lattice in $G$ since $M$ is compact. That is, we can write $M\cong\Gamma\setminus G$. Moreover, with this identification $\alpha$ acts by translations. Since $G$ admits a lattice, it is unimodular (see for example \cite[Theorem 9.1.6]{HarmonicAnalysis}). If $W\in\mathfrak{g}$ and $f_{W}:\Gamma\setminus G\to\Gamma\setminus G$ is given by right-translation of $e^{-W}$, then $Df_{W}$ is naturally identified with ${\rm Ad}_{e^{W}} = e^{{\rm ad}_{W}}$. The group $G$ is unimodular, so $f_{W}$ preserves volume which implies $1 = \det(e^{{\rm ad}_{W}}) = e^{{\rm tr}({\rm ad}_{W})}$ or ${\rm tr}({\rm ad}_{W}) = 0$. On the other hand, $\text{ad}_{W}(Z) = \eta(W)\cdot Z$ with $\eta:\mathfrak{g}\to\mathbb{R}$ linear. So, if we calculate the trace in the basis $Y_{1},...,Y_{d-1},Z$ then we have $\text{tr}(\text{ad}_{W}) = \eta(Z) + 0 = 0$ (where ${\rm ad}_{W}(Y_{j})\in\mathbb{R}Z$ so these terms do not contribute to the trace). That is: $\text{ad}_{W}(Z) = [W,Z] = 0$. It follows that $Z$ is central. This implies the lemma if $\mathfrak{g}$ is non-abelian since $Z$ is central and every bracket is contained in $\mathbb{R}Z$. If $\mathfrak{g}$ is abelian then $\Gamma\setminus G$ is a torus of dimension $\dim(M)$ which would imply $b_{1}(M) = \dim(M)$, a contradiction.
\end{proof}
The following lemma is standard.
\begin{lemma}\label{L:ClassificationOf2StepNilpotent}
Any simply connected $2-$step nilpotent Lie group with $\dim[G,G] = 1$ is $H^{g}\times\mathbb{R}^{n}$ for some $g\geq1$ and $n\geq0$.
\end{lemma}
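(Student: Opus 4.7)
The plan is to reduce the problem to the classification of alternating bilinear forms on a finite dimensional real vector space. First I would pass to the Lie algebra $\mathfrak{g}$ of $G$. Since $\mathfrak{g}$ is $2$-step nilpotent, $[\mathfrak{g},\mathfrak{g}] \subseteq \mathfrak{z}(\mathfrak{g})$, and by hypothesis $[\mathfrak{g},\mathfrak{g}]$ is $1$-dimensional, so pick $Z \in \mathfrak{g}$ with $[\mathfrak{g},\mathfrak{g}] = \mathbb{R}Z$. This $Z$ is central.

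Next, the bracket on $\mathfrak{g}$ takes values in $\mathbb{R}Z$ and vanishes whenever one argument is in $\mathbb{R}Z$, so it descends to an alternating bilinear form
\begin{align*}
B:\mathfrak{g}/\mathbb{R}Z \times \mathfrak{g}/\mathbb{R}Z \to \mathbb{R},\quad B(\overline{X},\overline{Y})Z = [X,Y].
\end{align*}
Since $Z$ actually lies in $[\mathfrak{g},\mathfrak{g}]$, the form $B$ is not identically zero. By the standard normal form for alternating bilinear forms over $\mathbb{R}$, there exists a basis $\overline{X}_{1},\overline{Y}_{1},\ldots,\overline{X}_{g},\overline{Y}_{g},\overline{W}_{1},\ldots,\overline{W}_{n}$ of $\mathfrak{g}/\mathbb{R}Z$ with $g \geq 1$ such that $B(\overline{X}_{i},\overline{Y}_{i}) = 1$, the form vanishes on all other pairs of basis vectors, and $\overline{W}_{1},\ldots,\overline{W}_{n}$ span the radical of $B$.

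Lifting these classes to arbitrary preimages $X_{i},Y_{i},W_{j} \in \mathfrak{g}$, the bracket relations become $[X_{i},Y_{i}] = Z$ and all other brackets among the generators, and brackets involving $Z$ or the $W_{j}$, vanish. Therefore $X_{1},Y_{1},\ldots,X_{g},Y_{g},Z,W_{1},\ldots,W_{n}$ is a basis of $\mathfrak{g}$ realizing a direct sum of Lie algebras $\mathfrak{g} \cong \mathfrak{h}^{g} \oplus \mathbb{R}^{n}$, where $\mathfrak{h}^{g}$ is the Heisenberg Lie algebra of Example \ref{Ex:HeisenberGroup}. Passing to the associated simply connected Lie groups gives $G \cong H^{g} \times \mathbb{R}^{n}$, completing the proof.

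I expect no serious obstacle: the only point requiring any care is ensuring that the lifts $X_{i},Y_{i},W_{j}$ indeed satisfy the claimed bracket relations on the nose (not merely modulo $\mathbb{R}Z$), which is automatic because $B$ encodes the bracket exactly and all other brackets land in $\mathbb{R}Z$ with coefficient $B(\overline{\cdot},\overline{\cdot}) = 0$. The bound $g \geq 1$ is forced by $[\mathfrak{g},\mathfrak{g}] \neq 0$, since if $g = 0$ then $B \equiv 0$, contradicting $\dim[\mathfrak{g},\mathfrak{g}] = 1$.
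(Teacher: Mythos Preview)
Your proof is correct and is precisely the standard argument the paper alludes to; the paper itself does not supply a proof, simply labeling the lemma as ``standard.'' The reduction to the normal form of an alternating bilinear form on $\mathfrak{g}/\mathbb{R}Z$ is exactly the expected route, and your handling of the lifts and the bound $g\geq 1$ is clean.
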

Using this lemma we can finish the proof of Theorem \ref{MainThm:ThmB} $(ii)$.
\begin{proof}[Proof of Theorem \ref{MainThm:ThmB} $(ii)$]
By Lemmas \ref{L:ReductionFromLargeBettiTohomogeneous} and \ref{L:ClassificationOf2StepNilpotent} Theorem \ref{MainThm:ThmB} $(ii)$ follows if $b_{1}(M) = d-1$, in this case Lemma \ref{L:GHonHeisenberg} also shows that $k\geq 2$ and the translation action intersect the derived subgroup. If $b_{1}(M) = d$ then Theorem \ref{MainThm:ThmB} $(ii)$ follows from Theorem \ref{MainThm:ThmB} $(i)$ and Theorem \ref{Thm:ComplicatedTopologyGiveSubmersion}. Theorem \ref{Thm:ComplicatedTopologyGiveSubmersion} also shows that $b_{1}(M)\leq d$, which finishes the proof.
\end{proof}

\subsection{Proof of Theorem \ref{MainThm:ThmB} point (iii): Actions with small codimension}\label{SubSec:RigiditySmallCodimension}

Let $\alpha:\mathbb{R}^{k}\times M\to M$ be a GH action on $M$, where $\dim M = d = k + 1$. In this section, we prove Theorem \ref{MainThm:ThmB} $(iii)$. We begin by showing that $M$ is diffeomorphic to $\Gamma\setminus G$ with $G$ some solvable Lie group and $\Gamma$ a lattice. We then show that $G$ is necessarily nilpotent (and, in fact, isomorphic to $\mathbb{R}^{n}$ or $H^{1}\times\mathbb{R}^{n}$).

Let $\mathcal{O}_{\alpha}$ be the orbit foliation of $\alpha$ and $\mu\in\Omega^{d}(M)$ the $\alpha-$invariant volume form (which exists by Lemma \ref{Thm:PropertiesOrbitLaplacian1}). Let $X_{1},...,X_{k}$ be generators of $\alpha$. Defining $\theta\in\Omega^{1}(M)$ by $\theta = \iota_{X_{k}}\iota_{X_{k-1}}...\iota_{X_{1}}\mu$, it is immediate that $\theta$ is $\alpha-$invariant and that $\theta$ vanishes on $T\mathcal{O}_{\alpha}$. Let $Z\in\Gamma(TM)$ be such that $\theta(Z) = 1$. Since $\ker\theta = T\mathcal{O}_{\alpha}$ it follows that $TM = T\mathcal{O}_{\alpha}\oplus\mathbb{R}Z$. Let $Y = a_{1}X_{1}+...+a_{d-1}X_{k}\in\mathbb{R}^{k}$, then
\begin{align*}
0 = \mathcal{L}_{Y}(\theta(Z)) = \mathcal{L}_{Y}(\theta)(Z) + \theta(\mathcal{L}_{Y}(Z)) = \theta(\mathcal{L}_{Y}(Z))
\end{align*}
so $\mathcal{L}_{Y}(Z)\in\ker\theta = T\mathcal{O}_{\alpha}$, which implies $\mathcal{L}_{Y}(Z) = f_{1}X_{1}+...+f_{k}X_{k}$ with $f_{1},...,f_{k}\in C^{\infty}(M)$.
\begin{lemma}\label{L:ExistenceOfHomogeneous}
We can choose $Z$ such that $[Z,Y]\in\mathbb{R}^{k}$ for all $Y\in\mathbb{R}^{k}$. That is, we can choose $Z$ such that $\mathbb{R}X_{1}+...+\mathbb{R}X_{k} + \mathbb{R}Z$ is a Lie algebra.
\end{lemma}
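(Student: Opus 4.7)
The plan is to modify $Z$ by adding a smooth section of the orbit foliation and use the cohomology calculation from Theorem \ref{Thm:DecompositionOfCochainSpaces} to absorb the non-constant part of the structure ``constants''. Since $TM = T\mathcal{O}_{\alpha}\oplus\mathbb{R}Z$ and $T\mathcal{O}_{\alpha}$ is spanned pointwise by $X_{1},\ldots,X_{k}$, I propose the ansatz
\[
Z' = Z + \sum_{i = 1}^{k}g_{i}X_{i},\quad g_{i}\in C^{\infty}(M),
\]
which automatically preserves $\theta(Z') = 1$ and hence the transversality $TM = T\mathcal{O}_{\alpha}\oplus\mathbb{R}Z'$. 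Because $\alpha$ is abelian, $[X_{j},X_{i}] = 0$, so
\[
[X_{j},Z'] = [X_{j},Z] + \sum_{i = 1}^{k}(X_{j}g_{i})X_{i} = \sum_{i = 1}^{k}\bigl(f_{i}^{j} + X_{j}g_{i}\bigr)X_{i},
\]
where the $f_{i}^{j}\in C^{\infty}(M)$ are defined by $[X_{j},Z] = \sum_{i}f_{i}^{j}X_{i}$ (which exists, as noted just before the lemma). Thus the lemma reduces to finding $g_{1},\ldots,g_{k}\in C^{\infty}(M)$ such that each $f_{i}^{j} + X_{j}g_{i}$ is a constant.

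Next I would set up the cohomological problem. For each fixed $i$, define a $1$-cochain $\omega_{i}\in C^{1}(\alpha)$ by $\omega_{i}(X_{j}) := f_{i}^{j}$. I claim $\omega_{i}\in Z^{1}(\alpha)$. Indeed, applying the Jacobi identity to $X_{\ell},X_{j},Z$ and using $[X_{\ell},X_{j}] = 0$ gives
\[
[X_{\ell},[X_{j},Z]] = [X_{j},[X_{\ell},Z]],
\]
and expanding both sides (again using $[X_{\ell},X_{i}] = [X_{j},X_{i}] = 0$) yields $X_{\ell}f_{i}^{j} = X_{j}f_{i}^{\ell}$, which is exactly $\intd\omega_{i} = 0$ since $\alpha$ is abelian.

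Finally I would invoke Theorem \ref{Thm:DecompositionOfCochainSpaces}, which asserts $H^{1}(\alpha)\cong\Lambda^{1}(\mathbb{R}^{k})$. Consequently, each cocycle $\omega_{i}$ splits as $\omega_{i} = -\intd g_{i} + \ell_{i}$ for some $g_{i}\in C^{\infty}(M)$ and some linear functional $\ell_{i}:\mathbb{R}^{k}\to\mathbb{R}$ (taking real parts is harmless since the $f_{i}^{j}$ are real). Thus $f_{i}^{j} + X_{j}g_{i} = \ell_{i}(X_{j})$, so with this choice of $Z'$ we obtain $[X_{j},Z'] = \sum_{i}\ell_{i}(X_{j})X_{i}\in\mathbb{R}^{k}$, and by bilinearity $[Y,Z']\in\mathbb{R}^{k}$ for every $Y\in\mathbb{R}^{k}$.

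There is no serious obstacle here: the argument is essentially a direct exploitation of the cohomological rigidity of ${\rm GH}$ actions established in Theorem \ref{Thm:DecompositionOfCochainSpaces}, which simultaneously solves the $k^{2}$ twisted cohomological equations $X_{j}g_{i} = -f_{i}^{j} + \text{const}$. The only step requiring care is verifying the cocycle condition from Jacobi, but this is immediate from abelianness of $\alpha$.
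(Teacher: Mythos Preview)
Your proposal is correct and follows essentially the same route as the paper: define the structure functions $f_{i}^{j}$ via $[X_{j},Z]=\sum_{i}f_{i}^{j}X_{i}$, use the Jacobi identity (with $[X_{\ell},X_{j}]=0$) to verify that each $\omega_{i}$ is a $1$-cocycle, invoke Theorem~\ref{Thm:DecompositionOfCochainSpaces} to write $\omega_{i}=-\intd g_{i}+\ell_{i}$, and then replace $Z$ by $Z'=Z+\sum_{i}g_{i}X_{i}$. The paper organizes the computation slightly differently (it writes the cochain as $Y\mapsto f_{i}(Y)$ rather than indexing by $j$), but the argument is the same.
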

\begin{proof}
Let $Y\in\mathbb{R}^{k}$. Define maps $Y\mapsto f_{i}(Y)\in C^{\infty}(M)$ by $\mathcal{L}_{Y}(Z) = f_{1}(Y)X_{1}+...+f_{k}(Y)X_{k}$. Since $\mathcal{L}_{Y}(Z)$ is linear in $Y$, each $Y\mapsto f_{i}(Y)$ is a cochain. Let $Y,W\in\mathbb{R}^{k}$, it is immediate from the definition of $f_{i}$ that
\begin{align*}
& [W,[Y,Z]] = \left[W,\sum_{j = 1}^{d-1}f_{j}(Y)X_{j}\right] = \sum_{j = 1}^{d-1}Wf_{j}(Y)\cdot X_{j}, \\
& [Y,[W,Z]] = \left[Y,\sum_{j = 1}^{d-1}f_{j}(W)\cdot X_{j}\right] = \sum_{j = 1}^{d-1}Yu_{j}(W)\cdot X_{j}
\end{align*}
since $\mathbb{R}^{k}$ is abelian. The Jacobi identity implies
\begin{align*}
0 = [W,[Y,Z]] + [Z,[W,Y]] + [Y,[Z,W]] = [W,[Y,Z]] - [Y,[W,Z]]
\end{align*}
since $[W,Y] = 0$. It follows $Wu_{i}(Y) - Yu_{i}(W) = 0$, so each $f_{i}:\mathbb{R}^{k}\to C^{\infty}(M)$ is a cocycle (formula \ref{Eq:SecondCoboundaryOperator}). Since $\alpha$ is GH, Theorem \ref{Thm:DecompositionOfCochainSpaces} implies that each $f_{i}$ is the sum of a coboundary and a homomorphism. That is, for each $i = 1,...,k$ we find $c_{i}:\mathbb{R}^{k}\to\mathbb{R}$ and $v_{i}\in C^{\infty}(M)$ such that
\begin{align*}
[Y,Z] = \sum_{i = 1}^{d-1}(Yv_{i} + c_{i}(Y))\cdot X_{i},\quad Y\in\mathbb{R}^{k}.
\end{align*}
The lemma follows by redefining $Z$ as $\Tilde{Z} = Z - v_{1}\cdot X_{1} - ... - v_{k}\cdot X_{k}$.
\end{proof}
Denote the Lie algebra from Lemma \ref{L:ExistenceOfHomogeneous} by $\mathfrak{g}$, with associated simply connected Lie group $G$.
\begin{lemma}\label{L:StructureOfManifold}
The manifold $M$ can be written as $M = \Gamma\setminus G$ where $\Gamma\leq G$ is a uniform lattice and $\alpha$ acts $M$ by translations. Moreover, $G$ is a solvable group, and $\mathbb{R}^{k}$ embed in $G$ as a codimension $1$ abelian, normal subgroup $A$ such that $\alpha$ acts by translations of $A$.
\end{lemma}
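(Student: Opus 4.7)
The plan is to integrate the Lie algebra $\mathfrak{g} = \mathrm{span}(X_1, \ldots, X_k, Z)$ produced by Lemma \ref{L:ExistenceOfHomogeneous} to a global right action of the simply connected Lie group $G$ with Lie algebra $\mathfrak{g}$, and then to identify $M$ with $\Gamma \setminus G$ for a uniform lattice $\Gamma$.

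First I would observe that since $M$ is compact, every smooth vector field on $M$ is complete; in particular every element of $\mathfrak{g}$ (viewed as a vector field) is complete. Palais's theorem on the integration of finite-dimensional Lie algebras of complete vector fields then produces a unique smooth right action $R \colon M \times G \to M$ whose infinitesimal generators are exactly the elements of $\mathfrak{g}$. Because $X_1, \ldots, X_k, Z$ form a smooth frame on $M$ (recall $Z$ is transverse to $T\mathcal{O}_\alpha$ and the $X_j$ span $T\mathcal{O}_\alpha$), the action $R$ is locally free and every $G$-orbit is open in $M$. Connectedness of $M$ forces $R$ to be transitive; letting $\Gamma$ be the stabilizer of a chosen basepoint $x_0$, the orbit map $g \mapsto R(x_0, g)$ descends to a diffeomorphism $\Gamma \setminus G \to M$ that conjugates right translation on $\Gamma \setminus G$ to $R$. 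Local freeness of $R$ makes $\Gamma$ discrete, and compactness of $M$ makes $\Gamma$ uniform.

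Next I would read off the structural claims directly from the brackets in Lemma \ref{L:ExistenceOfHomogeneous}: $[X_i, X_j] = 0$, and $[Z, Y] \in \mathrm{span}(X_1, \ldots, X_k)$ for every $Y \in \mathbb{R}^k$. Hence $\mathfrak{a} := \mathrm{span}(X_1, \ldots, X_k)$ is a codimension $1$ abelian ideal of $\mathfrak{g}$, and the derived series $\mathfrak{g} \supset \mathfrak{a} \supset 0$ shows that $\mathfrak{g}$, hence $G$, is $2$-step solvable. The corresponding analytic subgroup $A := \exp_G(\mathfrak{a}) \leq G$ is then a closed, codimension $1$, abelian, normal subgroup isomorphic to $\mathbb{R}^k$ (closedness and simple-connectedness of $A$ follow from $\mathfrak{a}$ being an ideal in the simply connected solvable group $G$). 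Since the infinitesimal generators of the $A$-action on $M$ through $R$ are $X_1, \ldots, X_k$, which are also the generators of $\alpha$, the action $\alpha$ coincides with $R\vert_A$, i.e., $\alpha$ acts by $A$-translations on $\Gamma \setminus G$.

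The main obstacle is the first step: guaranteeing that the Lie subalgebra of vector fields on $M$ integrates to a \emph{global} right $G$-action rather than only a local one. The two inputs that make this go through are completeness of every vector field in $\mathfrak{g}$ (furnished by compactness of $M$) and simple connectedness of $G$ (which rules out monodromy obstructions); together these are precisely the hypotheses of Palais's theorem. Once the global action is in hand, all remaining assertions — the quotient description of $M$, solvability of $G$, and the normal abelian codimension $1$ structure of $A$ — follow by reading off the Lie bracket structure already established in Lemma \ref{L:ExistenceOfHomogeneous}.
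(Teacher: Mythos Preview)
Your proposal is correct and follows essentially the same approach as the paper: integrate the finite-dimensional Lie algebra $\mathfrak{g}$ of vector fields to a transitive locally free $G$-action on $M$, identify $M\cong\Gamma\setminus G$ via the stabilizer, and read off solvability and the structure of $A$ from the bracket relations $[Z,\mathfrak{a}]\subset\mathfrak{a}$ established in Lemma~\ref{L:ExistenceOfHomogeneous}. The paper's proof is terser --- it treats the integration step as ``immediate'' --- whereas you spell out explicitly why completeness (from compactness of $M$) and simple connectedness of $G$ allow Palais's theorem to produce a global action, which is a welcome clarification.
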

\begin{proof}
The first part of the lemma is immediate since $\mathfrak{g}\to\Gamma(TM)$ defines a locally free action of $G$ on $M$, and the dimension of $G$ coincides with the dimension of $M$. Since this action of $G$ is locally free, the stabilizer of a point is a lattice, $\Gamma$, and since $M$ is compact $\Gamma$ is uniform. Let $\mathfrak{a}\subset\mathfrak{g}$ be the codimension $1$ abelian subalgebra generated by $X_{1},...,X_{k}$. By (the proof of) the previous lemma $[Z,\mathfrak{a}]\subset\mathfrak{a}$, so $\mathfrak{a}$ is an ideal in $\mathfrak{g}$. Since $\mathfrak{a}$ is a codimension $1$ abelian ideal, it follows that $\mathfrak{g}$ is solvable so $G$ is solvable. The last part of the lemma is clear if we let $A$ be the subgroup corresponding to $\mathfrak{a}\subset\mathfrak{g}$.
\end{proof}
Recall that the \textit{nilradical} of a group $G$ is the maximal normal nilpotent subgroup of $G$. We will use the following result by Mostow \cite[Section 5]{Mostow1954}.
\begin{theorem}[Mostow]\label{Thm:MostowPropertyOfLattice}
Let $\Gamma$ be a lattice in a connected solvable Lie group $G$ with nilradical $N$. Then $\Gamma\cap N$ is a lattice in $N$.
\end{theorem}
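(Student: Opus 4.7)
The plan is to follow the strategy of Mostow's original argument, reducing the theorem to showing that the subgroup $\Gamma N$ is closed in $G$. Once that is known, the exact sequence
\begin{equation*}
1 \to N/(\Gamma\cap N) \to G/\Gamma \to G/(\Gamma N) \to 1
\end{equation*}
(suitably interpreted on the level of measures via Fubini) lets one conclude: a finite $G$-invariant measure on $\Gamma\backslash G$ disintegrates over a finite measure on the discrete abelian quotient $\Gamma N\backslash G$, forcing $(\Gamma\cap N)\backslash N$ to carry a finite $N$-invariant measure, i.e., $\Gamma\cap N$ is a lattice in $N$. Since every lattice in a simply connected nilpotent Lie group is automatically uniform \cite{CorwinGreenleaf}, this is the desired conclusion.

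The first reduction I would make is to the simply connected case, by lifting through the universal cover: a lattice pulls back to a lattice, and nilradicals behave well under this lift. After this reduction, the key structural fact I would exploit is that for a connected solvable $G$ one has $[G,G]\subseteq N$, so that $G/N$ is a connected abelian Lie group and the projection $p\colon G\to G/N$ is a vector space quotient in the simply connected setting. Thus $\Gamma N$ is closed in $G$ if and only if $p(\Gamma)$ is discrete in $G/N$.

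The main technical step — and the one I expect to be the principal obstacle — is establishing that $p(\Gamma)$ is discrete. This is the solvable analog of showing that a lattice projects to a discrete subgroup in the maximal abelian quotient, and unlike the semisimple case it cannot be handled by unitary-representation rigidity. The approach I would take is the one pioneered by Mostow: study the adjoint action of $G$ on $\mathfrak{n}$, and show that a \emph{density result} holds for $\Gamma$ in its Zariski closure inside $\mathrm{Aut}(\mathfrak{n})$. Concretely, if $\bar\Gamma$ denotes the Zariski closure of $\mathrm{Ad}(\Gamma)$ in $\mathrm{GL}(\mathfrak{g})$, then the finite covolume of $\Gamma$ forces $\bar\Gamma$ to contain the image of $\mathrm{Ad}(G)$ up to finite index; this prevents the weights of $G/N$ on $\mathfrak{n}$ from being trivialized by accumulation points of $p(\Gamma)$, and hence forces $p(\Gamma)$ to be discrete.

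Once discreteness of $p(\Gamma)$ is in hand, the rest is bookkeeping. The finite volume of $G/\Gamma$ pushes forward to a finite invariant measure on $G/\Gamma N$, hence $p(\Gamma)$ is a lattice in $G/N\cong\mathbb{R}^m$ (in the simply connected case) and is therefore cocompact there. Disintegrating the Haar measure on $G$ along the fibration $G\to G/N$ and applying the structure of $\Gamma N / N$ as a cocompact lattice in $G/N$, one deduces that $(\Gamma\cap N)\backslash N$ has finite $N$-invariant volume, completing the proof.
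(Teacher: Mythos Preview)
The paper does not prove this theorem at all: it is quoted as a black box from Mostow's original paper \cite[Section 5]{Mostow1954}, and is used only once (to rule out the non-nilpotent solvable case in the proof of Theorem~\ref{MainThm:ThmB}$(iii)$). So there is no ``paper's own proof'' to compare against.

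Your outline is a faithful high-level sketch of the classical argument. The identification of the main obstruction --- discreteness of $p(\Gamma)$ in the abelian quotient $G/N$ --- is correct, and the Zariski-closure/density reasoning you indicate is indeed the substance of Mostow's proof. The measure-disintegration step at the end is standard once closedness of $\Gamma N$ is known. Two small comments: the ``exact sequence'' you write is not a sequence of groups (neither $\Gamma$ nor $\Gamma N$ is normal in $G$), but you correctly flag that it should be read as a fiber-bundle statement for homogeneous spaces; and your sketch of the discreteness step (``prevents the weights of $G/N$ on $\mathfrak{n}$ from being trivialized by accumulation points'') is suggestive but would need substantial work to turn into an actual argument. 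As a plan it is sound; as a proof it is incomplete at exactly the step you yourself call the principal obstacle.
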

We can now finish the proof of Theorem \ref{MainThm:ThmB} $(iii)$.
\begin{proof}[Proof of Theorem \ref{MainThm:ThmB} $(iii)$]
Let $G$ be the group from Lemma \ref{L:StructureOfManifold}. If $G$ is not nilpotent, then $A$ would correspond to the nilradical (since it has codimension one, and is abelian so in particular nilpotent). This would then imply (Theorem \ref{Thm:MostowPropertyOfLattice}) that $\Gamma\cap A$ is a lattice in $A$, so the $A-$orbits would be compact. This is a contradiction since the translation action by $A$ (which coincides with the action $\alpha$) is minimal (Remark \ref{Rmk:MinimalityRemark}). It follows that $G$ is nilpotent. The theorem follows by Lemma \ref{L:RationalityOfQuasiAbelianGroups} since $G$ admits a codimension $1$ abelian normal subgroup that is not rational.
\end{proof}

\section{A nilmanifold without globally hypoelliptic action}

In this section, we produce a family of examples of compact nilmanifolds that do not admit any GH (abelian) actions. This should be contrasted with the results of \cite{Sandfeldt2024}, where many examples of GH actions are produced.
\begin{lemma}\label{L:RationalityOfQuasiAbelianGroups}
Let $G$ be a simply connected $\ell-$step quasi-abelian nilpotent Lie group with a codimension $1$ normal abelian subgroup $A$ and with a lattice $\Gamma$. If $G\neq H^{1}\times\mathbb{R}^{N}$ then $A$ is a rational subgroup.
\end{lemma}
\begin{proof}
We begin by proving that $A$ is a rational subgroup when $\ell\geq 3$. Let $\mathfrak{a}$ be the ideal associated with $A$. If $X\in\mathfrak{g}\setminus\mathfrak{a}$ then any $Y\in\mathfrak{g}$ can be written as $Y = \alpha_{Y}X + Y_{a}$ with $Y_{a}\in\mathfrak{a}$. Any bracket can now be written:
\begin{align}
[Y,W] = [\alpha_{Y}X + Y_{a},\alpha_{W}X + W_{a}] = \alpha_{Y}[X,W_{a}] + \alpha_{W}[Y_{a},X]\in\mathfrak{a}
\end{align}
where the inclusion follows since $\mathfrak{a}$ is an ideal. That is, we have $\mathfrak{g}_{(2)} = [\mathfrak{g},\mathfrak{g}]\leq\mathfrak{a}$. If $\ell\geq3$ then there is a rational vector $Y\in\mathfrak{g}_{(2)}$ such that $\ker{\rm ad}_{Y}\neq\mathfrak{g}$. On the other hand, since $\mathfrak{g}_{(2)}\leq\mathfrak{a}$ and $\mathfrak{a}$ is abelian, we have $\mathfrak{a}\leq\ker{\rm ad}_{Y}$. It follows that $\mathfrak{a} = \ker{\rm ad}_{Y}$ since $\dim(\ker{\rm ad}_{Y})\leq\dim(\mathfrak{g})-1 = \dim(\mathfrak{a})$. Since $Y$ is rational, $\ker{\rm ad}_{Y}$ is a rational subspace, so $\mathfrak{a}$ is a rational ideal.

Assume instead that $\ell = 2$. Since both the center of $\mathfrak{g}$ and $\mathfrak{g}_{(2)} = [\mathfrak{g},\mathfrak{g}]$ are rational subgroups we can write $\mathfrak{g} = \Hat{\mathfrak{g}}\oplus\mathbb{R}^{N}$ where the center of $\Hat{\mathfrak{g}}$ coincide with $\mathfrak{g}_{(2)}$. After dropping to $\Hat{\mathfrak{g}}$ we may assume without loss of generality that the center of $\mathfrak{g}$ coincides with $\mathfrak{g}_{(2)}$. Let $Z_{1},...,Z_{n}\in\mathfrak{g}_{(2)}$ be a rational basis, $X\in\mathfrak{g}\setminus\mathfrak{a}$ be a rational vector, $V'$ a rational complement of $\mathfrak{g}_{(2)}$ in $\mathfrak{g}$, and $V = V'\cap\mathfrak{a}$. If $W\in V$ and $[X,W] = 0$ then $W$ commute with both $X$ and $\mathfrak{a}$ so $W$ is central, which implies that $W = 0$ since $V\cap\mathfrak{g}_{(2)} = 0$. It follows that there is a basis $Y_{1},...,Y_{n}\in V$ such that $[X,Y_{j}] = Z_{j}$. If $U_{j}\subset V'$ is defined by: $Y\in U_{j}$ if $[X,Y]\in\mathbb{R}Z_{j}$, then $U_{j}$ is a rational subspace since $X,Z_{j}$ and $V'$ are rational. On the other hand, if $Y\in U_{j}$ then $Y = \alpha X + \alpha_{1}Y_{1}+...+\alpha_{n}Y_{n}$ and $[X,Y] = \alpha_{1}Z_{1}+...+\alpha_{n}Z_{n}\in\mathbb{R}Z_{j}$ so $Y = \alpha X + \alpha_{j}Y_{j}$. That is, we have an equality $U_{j} = \mathbb{R}X\oplus\mathbb{R}Y_{j}$. Let $\xi_{j}X + \eta_{j}Y_{j}$ be rational, then:
\begin{align}
[X,\xi_{j}X + \eta_{j}Y_{j}] = \eta_{j}Z_{j}\in\mathfrak{g}_{\mathbb{Q}}
\end{align}
so, since $Z_{j}$ is rational, $\eta_{j}\in\mathbb{Q}$. If we choose the vector $\xi_{j}X + \eta_{j}Y_{j}$ such that $\xi_{j},\eta_{j}\neq0$ then $(\xi_{j}X + \eta_{j}Y_{j})/\eta_{j} = \omega_{j}X + Y_{j}\in\mathfrak{g}_{\mathbb{Q}}$, so for each $j = 1,...,n$ there is a real number $\omega_{j}$ such that $\omega_{j}X + Y_{j}\in\mathfrak{g}_{\mathbb{Q}}$. If we take the bracket between $\omega_{i}X + Y_{i}$ and $\omega_{j}X + Y_{j}$ with $i\neq j$ then:
\begin{align}
[\omega_{i}X + Y_{i},\omega_{j}X + Y_{j}] = \omega_{i}Z_{j} - \omega_{j}Z_{i}\in\mathfrak{g}_{\mathbb{Q}}.
\end{align}
All vectors $Z_{i}$, $Z_{j}$, and $[\omega_{i}X + Y_{i},\omega_{j}X + Y_{j}]$ are rational, so $\omega_{i}$ and $\omega_{j}$ are both rational numbers. It follows that $Y_{1},...,Y_{n}\in V$ are all rational so $\mathfrak{a}$ is rational (since $Y_{1},...,Y_{n},Z_{1},...,Z_{n}$ form a rational basis of $\mathfrak{a}$). If $\mathfrak{a}$ is not a rational subspace, then $n = 1$ so $\mathfrak{g} = {\rm span}(X,Y_{1},Z_{1})\cong\mathfrak{h}^{1}$.
\end{proof}
\begin{lemma}\label{L:IrrationalInDerivedSubgroup}
Let $G$ be a $2-$step nilpotent Lie group with lattice $\Gamma\leq G$, $\rho:\mathbb{R}^{k}\to G$ a homomorphism, and $\alpha(\mathbf{t})x = x\rho(\mathbf{t})$ the corresponding action. If $\alpha$ is GH then ${\rm Im}(\rho)\cap[G,G]$ is irrational in $[G,G]$\footnote{i.e., the only rational subgroup which contain ${\rm Im}(\rho)\cap[G,G]$ is $[G,G]$.}.
\end{lemma}
\begin{proof}
Suppose, for contradiction, that there is a proper rational subgroup $W\leq[G,G]$ such that ${\rm Im}(\rho)\cap[G,G]\subset W$. We may assume, without loss of generality, that $\dim(W) = \dim([G,G])-1$. Let $N' = G/W$, and let $Z$ be a rational subgroup of the center of $N'$ that is complementary to $[N',N']$. Finally, define $N = N'/Z$, so that $N$ is $2-$step with $1-$dimensional center and the center of $N$ coincide with $[N,N]$. This implies that $N\cong H^{g}$ for some $g$ (Lemma \ref{L:ClassificationOf2StepNilpotent}). Since $G\to N$ is defined over $\mathbb{Q}$ we find a lattice $\Lambda\leq N$ such that the image of $\Gamma$ under the quotient map $G\to N$ maps $\Gamma$ onto $\Lambda$. This defines a submersion:
\begin{align}
M_{\Gamma}\to N_{\Lambda}.
\end{align}
Let $\Hat{\rho}:\mathbb{R}^{k}\to N$ be the map defined by $\mathbb{R}^{k}\xrightarrow{\rho} G\to N$, and let $\beta$ be the corresponding translation action on $N_{\Lambda}$. By Lemma \ref{L:PropertiesOrbitLaplacian2} the action $\beta$ that is GH. Lemma \ref{L:GHonHeisenberg} implies that ${\rm Im}(\Hat{\rho})\cap [N,N] = [N,N]$ which is a contradiction since we assumed that ${\rm Im}(\rho)\cap[G,G]\subset W$, and $W$ is mapped to $0$ under the map $G\to N$.
\end{proof}
\begin{theorem}
Let $\ell\geq3$ and $G$ be a $\ell-$step quasi-abelian nilpotent Lie group with a lattice $\Gamma$, then $M_{\Gamma}$ supports no {\rm GH} action.
\end{theorem}
\begin{proof}
Assume for contradiction that $\alpha$ is a GH action on $M_{\Gamma}$. By Theorem \ref{MainThm:ThmB}, case $(i)$ there is no loss of generality to assume that the GH action $\alpha$ is by translations. We write $\alpha(\mathbf{t})x = x\rho(\mathbf{t})$ for some homomorphism $\rho:\mathbb{R}^{k}\to G$, and to simplify the notation we write $H = {\rm Im}(\rho)\leq G$. Let $A\leq G$ be the normal codimension $1$ subgroup and let $X\in\mathfrak{g}_{\mathbb{Q}}$ be a rational vector complementary to $\mathfrak{a}$. Since $\mathfrak{a}$ is rational (Lemma \ref{L:RationalityOfQuasiAbelianGroups}) and $\alpha$ acts minimally on $M_{\Gamma}$ we have $\mathfrak{h}\not\leq\mathfrak{a}$ so there is $Y\in\mathfrak{h}$ that can be written $Y = \lambda X + Y_{a}$ with $\lambda\neq0$. Let:
\begin{align}
\mathfrak{p} = \{W\in\mathfrak{a}\text{ : }[Y,W] = 0\} = \{W\in\mathfrak{a}\text{ : }[X,W] = 0\}
\end{align}
then, since $\mathfrak{a}$ and $X$ are rational, $\mathfrak{p}$ is a rational subspace of $\mathfrak{a}$. We claim $\mathfrak{g}_{(2)}\leq\mathfrak{p}$, which is a contradiction since $[X,\mathfrak{g}_{(2)}] = [\mathfrak{g},\mathfrak{g}_{(2)}] = \mathfrak{g}_{(3)}\neq0$. Let $N = G/G_{(3)}$ and $\Lambda$ be the image of $\Gamma$ under the map $G\to N$. We write $H' = H/G_{(3)}$ and $\mathfrak{p}'$ as the image of $\mathfrak{p}$ in $\mathfrak{n}$. By Lemma \ref{L:IrrationalInDerivedSubgroup} $H'\cap[N,N]$ is irrational, on the other hand we have $\mathfrak{h}'\cap[\mathfrak{n},\mathfrak{n}]\subset\mathfrak{p}'$ since any element of $\mathfrak{h}$ commutes with $Y$ (since $\alpha$ is an abelian action). Since $\mathfrak{p}'\cap[\mathfrak{n},\mathfrak{n}]$ is rational and contains $\mathfrak{h}'\cap[\mathfrak{n},\mathfrak{n}]$ it follows that $\mathfrak{p}'\cap[\mathfrak{n},\mathfrak{n}] = [\mathfrak{n},\mathfrak{n}]$. That is, $\mathfrak{p}\cap\mathfrak{g}_{(2)}\to\mathfrak{g}_{(2)}/\mathfrak{g}_{(3)}$ is surjective. Let $V\leq\mathfrak{p}$ be any subspace such that $V\to\mathfrak{g}_{(2)}/\mathfrak{g}_{(3)}$ is surjective, then:
\begin{align}
\mathfrak{g}_{(2)} = V + [\mathfrak{g},V] + [\mathfrak{g},[\mathfrak{g},V]] +... = V + [X,V] + [X,[X,V]] +...
\end{align}
Since $V\leq\mathfrak{p}$ and $[X,\mathfrak{p}] = 0$ (by definition) this implies that $\mathfrak{g}_{(2)} = V\subset\mathfrak{p}$.
\end{proof}

\appendix

\section{Proof of Lemma 2.2}
\label{Appendix:A}

In this appendix, we prove Lemma \ref{L:BackgroundTameEstimates2}. Let $L:C^{\infty}(M)\to C^{\infty}(M)$ be an operator, $\mu$ a volume form on $M$ and $L^{*}$ an adjoint of $L$, $L'(\overline{f}\mu) = \overline{L^{*}f}\cdot\mu$. Denote by $\norm{\cdot}_{n}$ the $n$th Sobolev norm and $W^{n}(M,\mu)$ the $n$th Sobolev space (using $L^{2}-$norms).
\begin{proof}[Proof of Lemma \ref{L:BackgroundTameEstimates2}]
Assume that $L^{*}$ is GH. We begin by showing $\dim\ker L < \infty$. There is $s_{0}\in\mathbb{N}_{0}$ such that $\norm{Lu}_{0}\leq c\norm{u}_{s}$ for $s\geq s_{0}$ (since $L$ is continuous in the Fréchet topology). That is, we can extend $L$ to $L:W^{s}(M,\mu)\to L^{2}(M,\mu)$. If $u\in\ker L\subset W^{s}(M,\mu)$ and $f\in C^{\infty}(M)$ then
\begin{align*}
0 = \int_{M}Lu\cdot\overline{f}\intd\mu = \int_{M}u\cdot\overline{L^{*}f}\intd\mu,
\end{align*}
so $Lu = (L^{*})'u = 0$ when we consider $u\in\mathcal{D}'(M)$. The function $0$ is smooth, so $u\in C^{\infty}(M)$ by Lemma \ref{L:BackgroundTameEstimates1}. It follows that the kernel of $L:W^{s_{0}}(M,\mu)\to L^{2}(M,\mu)$ coincide with the kernel of $L:W^{s_{0}+1}(M,\mu)\to L^{2}(M,\mu)$. The inclusion $W^{s_{0}+1}(M,\mu)\to W^{s_{0}}(M,\mu)$ is compact \cite[Theorem 3.6]{Hebey1996}, so the identity map on $\ker L\subset W^{s_{0}}(M,\mu)$ is compact, and since $\ker L$ is a Banach space it follows that $\ker L$ is finite dimensional.

Consider the map
\begin{align}\label{Eq:ExtensionOfGHoperator}
L:\text{Dom}(L)\subset W^{-1}(M,\mu)\to C^{\infty}(M)
\end{align}
where the domain $\text{Dom}(L)$ of $L$ coincide with $C^{\infty}(M)\subset W^{-1}(M,\mu)$ ($L^{*}$ is GH so if $LD\in C^{\infty}(M)$ then $D\in C^{\infty}(M)$ by Lemma \ref{L:BackgroundTameEstimates1}). We claim that $L$, in Equation \ref{Eq:ExtensionOfGHoperator}, is a closed operator. Indeed, if $(D_{n},LD_{n})\in\text{Graph}(L)$ converges to some $(D,g)$ then
\begin{align*}
LD(f) = D(L^{*}f) = \lim_{n\to\infty}D_{n}(L^{*}f) = \lim_{n\to\infty}LD_{n}(f) = \int_{M}f\cdot\overline{g}\intd\mu
\end{align*}
so $LD = g$ and $(D,g)\in\text{Graph}(L)$. Define the map $T:\text{Graph}(L)\to L^{2}(M,\mu)$ by $(D,LD)\mapsto D$, which makes sense since $D\in\text{Dom}(L) = C^{\infty}(M)$. We claim that the operator $T$ is closed from the Fréchet space $\text{Graph}(L)$ to $L^{2}(M,\mu)$. Assume that $((D_{n},LD_{n}),D_{n})\in\text{Graph}(T)$ converges to some $((D,LD),g)$. Each $D_{n}$ is represented by some $g_{n}\in C^{\infty}(M)$:
\begin{align*}
D_{n}(f) = \int_{M}f\cdot\overline{g}_{n}\intd\mu,
\end{align*}
and since $T(D_{n},LD_{n}) = g_{n}$ it follows that $g_{n}\to g$ weakly in $L^{2}(M,\mu)$. If $g_{n}\to g$ weakly in $L^{2}(M,\mu)$, then, since $L^{2}(M,\mu)\to W^{-1}(M,\mu)$ is compact, it follows that $D_{n}\to g$ strongly $W^{-1}(M,\mu)$. Finally since $D_{n}\to D$ it follows that $D = g$, so $((D,LD),g) = ((g,Lg),g)\in\text{Graph}(T)$. By the Closed graph theorem in Fréchet spaces, $T$ is a continuous operator. The norms $\norm{\cdot}_{n}$ are increasing, so we find some integer $s\in\mathbb{N}_{0}$
\begin{align}\label{Eq:PreliminaryGraphProjectionEstimate}
\norm{f}_{0}\leq C\left(\norm{f}_{-1} + \norm{Lf}_{s}\right),\quad (f,Lf)\in{\rm Graph}(L).
\end{align}
Let $e_{1},...,e_{N}\in\ker L\subset C^{\infty}(M)$ be a ON-basis of $\ker L$. Define $V\subset C^{\infty}(M)$ by
\begin{align*}
V = \left\{v\in C^{\infty}(M)\text{ : }\int_{M}v\cdot\overline{e}_{j}\intd\mu = 0\right\}.
\end{align*}
We claim that for $v\in V$ we have $\norm{v}_{0}\leq C'\norm{Lv}_{s}$. Assume for contradiction that this is not the case. There exists $v_{j}\in V$ such that $\norm{v_{j}}_{0} = 1$ and $\norm{Lv_{j}}_{s}\to 0$. Since $\norm{v_{j}}_{0} = 1$ we may assume, after dropping to a subsequence, that $v_{j}\to v$ weakly in $L^{2}(M,\mu)$. For $f\in C^{\infty}(M)$
\begin{align*}
\int_{M}v\cdot\overline{L^{*}f}\intd\mu = \lim_{j\to\infty}\int_{M}v_{j}\cdot\overline{L^{*}f}\intd\mu = \lim_{j\to\infty}\int_{M}Lv_{j}\cdot\overline{f}\intd\mu = 0,
\end{align*}
so $Lv = 0$ in the sense of distributions. Since $L^{*}$ is GH it follows that $v\in C^{\infty}(M)$ and $Lv = 0$ in the strong sense. Since $v_{j}\in V$ converges weakly to $v$ we have
\begin{align*}
\int_{M}v\cdot\overline{e}_{n}\intd\mu = \lim_{j\to\infty}\int_{M}v_{j}\cdot\overline{e}_{n}\intd\mu = 0,\quad n = 1,...,N,
\end{align*}
where we have used the definition of $V$. That is, $v\in\ker L\cap V$ which implies $v = 0$. Since this holds for any weak limit point of $(v_{j})_{j}$ it follows that $v_{j}\to 0$ weakly in $L^{2}(M,\mu)$. The embedding $L^{2}(M,\mu)\to W^{-1}(M,\mu)$ is compact, so $v_{j}\to 0$ holds with respect to the norm topology in $W^{-1}(M,\mu)$. Equation \ref{Eq:PreliminaryGraphProjectionEstimate} now implies
\begin{align*}
1 = \lim_{j\to\infty}\norm{v_{j}}_{0}\leq\lim_{j\to\infty}C\left(\norm{v_{j}}_{-1} + \norm{Lv_{j}}_{s}\right) = 0,
\end{align*}
which is a contradiction. It follows that $\norm{v}_{0}\leq C'\norm{Lv}_{s}$ for $v\in V$. Assume that $Lv_{j}\to f$ in $C^{\infty}(M)$. Since $V$ is complementary to $\ker L$ we may assume that $v_{j}\in V$. It follows that
\begin{align*}
\norm{v_{j}}_{0}\leq C'\norm{Lv_{j}}_{s}
\end{align*}
so, after possibly dropping to a subsequence, we may assume that $v_{j}\to v$ weakly in $L^{2}(M,\mu)$. For $g\in C^{\infty}(M)$
\begin{align*}
\int_{M}L^{*}g\cdot\overline{v}\intd\mu = \lim_{j\to\infty}\int_{M}L^{*}g\cdot\overline{v}_{j}\intd\mu = \lim_{j\to\infty}\int_{M}g\cdot\overline{Lv_{j}}\intd\mu = \int_{M}g\cdot\overline{f}\intd\mu
\end{align*}
so $Lv = f$ in the sense of distributions. Since $f\in C^{\infty}(M)$ it follows by Lemma \ref{L:BackgroundTameEstimates1} that $v\in C^{\infty}(M)$, and $Lv = f$ strongly. That is, $L$ has closed image.

The last claim is standard. Suppose that $L$ is also GH. Since $(L^{*})^{*} = L$ it follows that $\dim\ker L^{*} < \infty$. Let $f_{1},...,f_{M}\in\ker L^{*}$ be a ON-basis. Define
\begin{align*}
U = \left\{u\in C^{\infty}(M)\text{ : }\int_{M}u\cdot\overline{f}_{j}\intd\mu = 0,\text{ }j=1,...,M\right\}.
\end{align*}
We claim that $\text{Im}(L) = U$, this proves the lemma since $U$ is complementary to $\ker L^{*}$. For any $v\in C^{\infty}(M)$ we have
\begin{align*}
\int_{M}Lv\cdot\overline{f}_{j}\intd\mu = \int_{M}v\cdot\overline{L^{*}f_{j}}\intd\mu = 0,\quad j = 1,...,M,
\end{align*}
so ${\rm Im}(L)\subset U$. To show the converse, it suffices to show that $\text{Im}(L)$ is dense in $U$ since $L$ has closed image. If ${\rm Im}(L)$ is not dense in $U$, there exist, by the Hahn-Banach theorem, a distribution $D$ such that $D|_{U}\neq 0$, $D(Lu) = 0$ for all $u\in C^{\infty}(M)$ (equivalently $L'D = 0$), and $D|_{\ker L^{*}} = 0$. Since $L'D = L^{*}D = 0$, and $L$ is assumed to be GH it follows that $D = g\in C^{\infty}(M)$ (by Lemma \ref{L:BackgroundTameEstimates1}). It follows that $L^{*}g = 0$, or $g\in\ker L^{*}$. Since $D|_{\ker L^{*}} = 0$ we have $D(g) = 0$, but $D = g$ in the sense of distributions so $0 = D(g) = \norm{g}_{0}^{2}$ which is a contradiction since $D|_{U}\neq0$. It follows that $\text{Im}(L)$ is dense in $U$.
\end{proof}

\bibliography{main.bib}{}
\bibliographystyle{abbrv}

\end{document}